\documentclass[10pt]{amsart}
\usepackage[utf8]{inputenc}

\usepackage{amsthm}
\usepackage{amssymb}
\usepackage[american]{babel}
\usepackage{exscale}
\usepackage[mathscr]{euscript}
\usepackage{upgreek}
\usepackage{url}
\usepackage[all,ps,tips,tpic]{xy}
\usepackage{graphicx}
\usepackage{color}
\usepackage{booktabs}
\usepackage{stmaryrd}
\usepackage{cite}
\input xy
\xyoption{all}
\usepackage[nocolor]{sseq} 
\usepackage[bookmarks=true, filecolor=green]{hyperref}
\usepackage{pdflscape}  

\newtheorem{lem}{Lemma}[section]

\newtheorem{definition}[lem]{Definition}

\newtheorem{cor}[lem]{Corollary}
\newtheorem{thm}[lem]{Theorem}
\newtheorem{prop}[lem]{Proposition}

\theoremstyle{remark}
\newtheorem{rem}[lem]{Remark}
\newtheorem{example}[lem]{Example}

\DeclareMathOperator*{\colim}{colim}

\DeclareMathOperator{\SSet}{SSet}
\DeclareMathOperator{\Hom}{Hom}
\DeclareMathOperator{\Map}{Map}


\DeclareMathOperator{\Dn}{\Delta[\textit{n}]}
\DeclareMathOperator{\Dm}{\Delta[\textit{m}]}

\DeclareMathOperator{\Sp}{Sp}
\DeclareMathOperator{\Ho}{Ho}

\DeclareMathOperator{\N}{\mathbb{N}}


\newcommand{\SHC}{\mathcal{SHC}}
\newcommand{\Top}{\mathit{Top}}
\newcommand{\C}{\mathcal{C}}
\newcommand{\D}{\mathcal{D}}
\newcommand{\U}{\mathbb{S}}

\begin{document}
 
 \title{STABLE FRAMES IN MODEL CATEGORIES}
 
 \author{Fabian Lenhardt}
 
 \maketitle
 
 \begin{abstract}
 We develop a stable analogue to the theory of cosimplicial frames in model cagegories; this is used to enrich all homotopy categories of stable model categories over the usual stable homotopy category and to give a different description of the smash product of spectra which is compared with the known descriptions; in particular, the original smash product of Boardman is identified with the newer smash products coming from a symmetric monoidal model of the stable homotopy category.
 \end{abstract}
 
\section{Introduction}

Model categories are a convenient framework for "doing homotopy theory". Despite their not very complicated definition, they are rather powerful - many of the constructions known in topology can actually be carried out in model categories or their associated homotopy categories, for example suspensions and cofiber sequences, which gives lots of extra structure one can exploit. \\
Taking a more global point of view, considering all model categories at once, one can prove the following, essentially due to Dwyer, Kan and Hovey: 

\begin{thm}
The model axioms determine the usual homotopy category of $CW$-complexes up to equivalence, including cofiber sequences and the smash product. \\
More precisely, consider for any model category $\C$ the category of left Quillen functors $\SSet \rightarrow \C$ and their natural transformations, localized at the natural weak equivalences. This is again a category which is equivalent to $\Ho(\C)$ via evaluation at the one-point simplicial set, and this is a universal property.
\end{thm}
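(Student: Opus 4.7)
The plan is to identify left Quillen functors $\SSet \to \C$ with Reedy cofibrant cosimplicial frames on objects of $\C$, and then to invoke the Dwyer--Kan--Hovey theory of framings. Since $\SSet$ is freely generated under colimits by the representable simplicial sets $\Delta[n]$, any colimit-preserving functor $F \colon \SSet \to \C$ is the left Kan extension of its restriction to $\Delta$, and hence completely determined by the cosimplicial object $F^\bullet := F(\Delta[\bullet])$ in $\C$. Conversely, a cosimplicial object $X^\bullet$ determines such a functor via $F_{X^\bullet}(K) = K \otimes_\Delta X^\bullet$, which automatically has a right adjoint given by $Y \mapsto \Hom_\C(X^\bullet, Y)$. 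I would check that $F_{X^\bullet}$ is a \emph{left Quillen} functor precisely when $X^\bullet$ is a Reedy cofibrant cosimplicial frame on $X^0$: the latching maps being cofibrations is equivalent to sending the boundary inclusions $\partial\Delta[n] \hookrightarrow \Delta[n]$ to cofibrations, and the cosimplicial structure maps being weak equivalences is equivalent to sending $\Delta[0] \hookrightarrow \Delta[n]$ to a weak equivalence.

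Essential surjectivity of the evaluation functor then follows immediately: every object $X \in \C$ admits a cosimplicial frame $X^\bullet$ with $X^0 = X$, built inductively via the small object argument by factoring successive latching maps as cofibrations followed by acyclic fibrations. This produces a Quillen functor $F_X$ with $F_X(\Delta[0]) = X$.

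For fully faithfulness I would invoke the homotopical uniqueness of framings. Given Quillen functors $F, G$ with associated frames $F^\bullet, G^\bullet$ and a map $f \colon F(\Delta[0]) \to G(\Delta[0])$ in $\Ho(\C)$, represented by an honest map after (co)fibrant replacement, one constructs a map of frames $F^\bullet \to G^\bullet$ lifting $f$ by induction along the Reedy filtration; the obstruction at the $n$-th stage is a lifting problem of a Reedy cofibration against a Reedy acyclic fibration assembled from the framing conditions on $G^\bullet$, and hence admits a solution. This gives fullness. Faithfulness would follow analogously: two natural transformations agreeing at $\Delta[0]$ in $\Ho(\C)$ can be joined by a simplicial homotopy built using the cosimplicial cylinder of a frame, producing a natural weak equivalence that becomes an identity after localization.

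The main obstacle I expect is precisely this rigidity step: verifying that after inverting natural weak equivalences the hom-sets of left Quillen functors match those of $\Ho(\C)$. This is the substantive content of the framing machinery, since the identification of Quillen functors with cosimplicial objects and the essential surjectivity are largely formal; the delicate point is that the obstructions to extending and homotoping maps of frames all vanish because of Reedy cofibrancy together with the acyclicity of the cosimplicial structure maps. A secondary subtlety will be ensuring that "left Quillen" is robust under the localization, which requires taking Reedy cofibrant replacements of cosimplicial objects when needed.
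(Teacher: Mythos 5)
Your proposal follows essentially the same route as the paper: identify left Quillen functors $\SSet \rightarrow \C$ with Reedy cofibrant, homotopically constant cosimplicial objects (frames), then establish essential surjectivity, fullness and faithfulness of evaluation at degree $0$ using the Reedy model structure on $\C^{\Delta}$, exactly as the paper does in its section on cosimplicial frames. The only cosmetic difference is in the faithfulness step, where you build a simplicial homotopy directly from the cosimplicial cylinder, while the paper instead uses the derived adjunction between $ev_0$ and the constant cosimplicial object together with the fact that its counit is an isomorphism on frames (because frames are homotopically constant); both arguments are standard and interchangeable here.
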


We will explain this in some detail in Section 5. \\
This is somewhat surprising, since the model axioms are rather short and do not make any reference at all to topology - nevertheless, they completely determine the usual homotopy  category. One might say that the model category of topological spaces - or, more precisely, the Quillen equivalent model category of simplicial sets -  is  the "free model category on one generator", which yields the above theorem. We will review this in \ref{homotopyframes}.  Basically as a consequence of the precise formulation of the above theorem, one gets an enrichment of any homotopy category of a model category over the usual homotopy category. \\

In this paper, we are concerned with \textit{stable} model categories, which are those (pointed) model categories where the suspension induces an equivalence on the homotopy categories. The basic examples are the various model categories of spectra whose homotopy categories are the stable homotopy category $\SHC$; see for example \cite{BF}, \cite{EKMM}, \cite{HSS} or \cite{MMSS}. It has long been known that the category $\SHC$ is symmetric monoidal; and, as are all homotopy categories of stable model categories, it is also a triangulated category. One of the main points of this paper is that \textit{all} this structure is \textit{uniquely determined by the model axioms}:

\begin{thm}
The stable homotopy category $\SHC$ is determined up to monoidal triangulated equivalence by the model axioms.  \\
More precisely, consider for any stable model category $\C$ the category of left Quillen functors $\Sp\rightarrow \C$, where $\Sp$ is the model category of sequential spectra described in the next section, and their natural transformations, localized at the natural weak equivalences. This is again a category which is equivalent to $\Ho(\C)$ via evaluation at the sphere spectrum, and again this is a universal property.
\end{thm}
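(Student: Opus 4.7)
The plan is to develop a stable analogue of the cosimplicial frame machinery recalled in \ref{homotopyframes} and then rerun the argument of Theorem 1.1 in that setting. For a stable model category $\C$ and an object $X$, I would define a \emph{stable frame} on $X$ to be a sequence of cosimplicial frames $X^{(n)}$ on objects $X_n$ equipped with compatible structure maps $\Sigma X_n \to X_{n+1}$ (weak equivalences in the appropriate cosimplicial sense), where $\Sigma$ is the simplicial suspension coming from the frame. Such data tautologically assemble into a left Quillen functor $F_{X^{(\bullet)}}\colon \Sp \to \C$, given on a spectrum $E$ by a coend over the indexing category of $\Sp$, and by construction $F_{X^{(\bullet)}}(\mathbb{S}) \simeq X$.

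The next step is the existence and essential uniqueness of stable frames. Existence proceeds inductively: pick a cosimplicial frame on $X_0 := X$; then, using that $\Sigma$ is a Quillen equivalence on a stable $\C$, choose a cofibrant object $X_{n+1}$ with a weak equivalence $\Sigma X_n \to X_{n+1}$ and a cosimplicial frame on it extending the structure map. Uniqueness, in the form of a contractible space of choices, reduces to the unstable statement about cosimplicial frames together with lifting arguments in the Reedy model structure on the diagram category indexing stable frames, using that cofibrant-replacement in the corresponding "sequential cosimplicial" category has contractible fibers.

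With this in hand I would prove the equivalence $\Ho(\C) \simeq \Ho(\mathrm{LQuillen}(\Sp, \C))$ by showing the two functors $X \mapsto F_{X^{(\bullet)}}$ and $F \mapsto F(\mathbb{S})$ are mutually inverse up to natural weak equivalence. The nontrivial direction is that any left Quillen $F\colon \Sp \to \C$ is naturally weakly equivalent to $F_{F(\mathbb{S})^{(\bullet)}}$; for this I would restrict $F$ along the standard Quillen maps $\SSet_\ast \to \Sp$ to extract a canonical stable frame on $F(\mathbb{S})$, and then recognize $F$ as the resulting coend by a density argument: $\Sp$ is freely generated under homotopy colimits by the shifted sphere spectra. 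The universal property follows formally from this identification.

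Finally, the monoidal triangulated refinement. The triangulated structure on $\Ho(\C)$ is already encoded in the stable frame (cofiber sequences come from the simplicial action and stability supplies the inverse suspension), so any equivalence of the form above is automatically triangulated. For the monoidal structure, given two Quillen functors $F, G\colon \Sp \to \C$, composition with a chosen smash product on $\Sp$ yields $F \wedge G\colon \Sp \to \C$, and specializing to $\C = \Sp$ recovers a smash product on $\SHC$; independence of the chosen stable frames, together with the contractibility of the space of frames, gives coherence of this pairing. The main obstacle I anticipate is precisely this last coherence step: checking that the induced smash product is well defined and associative/commutative up to coherent homotopy without invoking any particular symmetric monoidal model of spectra, so that the identification with the classical smash product of Boardman promised in the abstract is a theorem rather than an assumption.
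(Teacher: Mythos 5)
Your overall architecture for the equivalence $\Ho(\C)\simeq\{\text{derived left Quillen functors }\Sp\to\C\}$ is essentially the paper's: stable frames are sequences of cosimplicial frames with suspension structure maps, existence is by induction using stability, and uniqueness is a lifting argument. Two local corrections, though. First, the structure maps must run $\Sigma X_{n}\to X_{n-1}$, so that $X_n$ is an $n$-fold \emph{desuspension} of $X_0$ (this is forced because $X_n=L(F_nS^0)$ and $F_nS^0$ is a desuspension of $\U$); your inductive step ``choose $X_{n+1}$ with a weak equivalence $\Sigma X_n\to X_{n+1}$'' goes the wrong way and does not use stability at all --- the point of stability is that one can \emph{desuspend}, i.e.\ find $X_{n+1}$ with a weak equivalence $\Sigma X_{n+1}\to X_n$. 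Second, your density/coend argument identifying an arbitrary left Quillen $F$ with the frame it restricts to is both vaguer and unnecessary: a spectrum is canonically a \emph{strict} coequalizer of free spectra, so the category of adjunctions $\Sp\rightleftharpoons\C$ is equivalent, as an ordinary category, to the category of $\Sigma$-cospectra; no statement about generation under homotopy colimits is needed. You also skip the verification that homotopic maps of stable frames induce the \emph{same} derived natural transformation --- this is not automatic, and it is exactly where the pushout-product compatibility between the level model structure on cospectra and the stable model structure on $\Sp$ has to enter.

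The genuine gap is the monoidal part, which is the substance of the word ``monoidal'' in the statement. You propose to define $F\wedge G$ by ``composition with a chosen smash product on $\Sp$'' --- but sequential spectra admit no point-set smash product, and a smash product on $\SHC$ is precisely what is being constructed, so as stated this is circular; you correctly flag coherence of the resulting pairing as the main obstacle but offer no way around it. The resolution is that the smash product \emph{is} composition of derived left Quillen endofunctors of $\Sp$: the category of derived left Quillen functors $\SHC\to\SHC$ is \emph{strictly} monoidal under composition (strict because, with the convention that $\Ho$ consists of cofibrant objects, composition of derived left Quillen functors is on-the-nose composition), each $\Ho(SF(\C))$ is a strict module over it, and the equivalence $ev_{\U}\colon\Ho(SF(\Sp))\to\SHC$ transports this to a coherent, no longer strict, monoidal structure on $\SHC$. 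Coherence is then inherited from strict associativity rather than checked by hand, and the comparison with genuinely monoidal models (and hence with Boardman's smash product) follows by exhibiting $X\Box F(-)$ as one admissible choice of stable frame on $X$. Without this device, or an equivalent one, your construction of the pairing does not get off the ground.
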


The reader may compare this to the following situation: The integers are the free group on one generator, satisfying $\Hom(\mathbb{Z}, G) \cong G$ via evaluation at $1$. Once one has this, one can reconstruct the ring structure of the integers as follows: for $a$, $b$ in $\mathbb{Z}$, choose the unique group homomorphism $f$ sending $1$ to $a$ and set $ab = f(b)$. Associativity of this is easily deduced out of the universal property. Similarly, one may construct an associative action of $\mathbb{Z}$ on any group; something very similar happens here on a categorical level to define the smash product out of the universal property. \\

In recent years, various model categories of spectra where constructed which do not only have monoidal homotopy categories, as was the case with the original definition, but which are actually symmetric monoidal on the nose. However, it is from the definitions not at all clear that the "old" and "new" definition of the smash products agree, and a proof of this seems to appear nowhere in the literature. The techniques developed here are very well-suited for giving a proof that they indeed agree (as was, of course, expected) -  the construction of the smash product given here is a new one, and it lies somewhere between the original one and the newer one, comparing nicely to both. \\
We will also obtain the following:

\begin{thm}
Let $\C$ be any stable model category. Then $\Ho(\C)$ is enriched over $\SHC$, in the sense of \cite[4.1.6]{Ho}.
\end{thm}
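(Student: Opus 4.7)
The strategy is to bootstrap the enrichment from Theorem 1.2, in direct analogy with the unstable story of cosimplicial frames producing an enrichment over $\Ho(\SSet)$. For each $X \in \C$, the universal property supplies a left Quillen functor $X \otimes -: \Sp \to \C$ with $X \otimes \U \simeq X$, determined up to the natural weak equivalences of such functors; its right adjoint $\Map^{\Sp}(X, -)$ will furnish the hom-spectra. Passing to total derived functors one obtains an adjunction
\[ X \otimes^L - : \SHC \rightleftarrows \Ho(\C) : \mathbf{R}\Map^{\Sp}(X, -), \]
so that
\[ [K, \mathbf{R}\Map^{\Sp}(X, Y)]_{\SHC} \cong [X \otimes^L K, Y]_{\Ho(\C)}, \]
and the case $K = \U$ recovers the ordinary hom-set. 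The unit $\U \to \mathbf{R}\Map^{\Sp}(X, X)$ of the prospective enrichment is obtained as the adjoint of the identity $X \otimes^L \U \simeq X$.

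The key ingredient for composition is a canonical associativity equivalence $X \otimes^L (K \wedge^L L) \simeq (X \otimes^L K) \otimes^L L$ natural in $K, L \in \SHC$ and $X \in \Ho(\C)$. Both sides, viewed as a functor of $K$ with $L$ and $X$ fixed, arise as derived left Quillen functors $\Sp \to \C$ sending $\U$ to $X \otimes^L L$, so the uniqueness clause of Theorem 1.2 forces them to be canonically equivalent. Granting this, the composition
\[ \mathbf{R}\Map^{\Sp}(X,Y) \wedge^L \mathbf{R}\Map^{\Sp}(Y,Z) \to \mathbf{R}\Map^{\Sp}(X,Z) \]
is defined as the adjoint of the composite
\[ X \otimes^L \bigl(\mathbf{R}\Map^{\Sp}(X,Y) \wedge^L \mathbf{R}\Map^{\Sp}(Y,Z)\bigr) \simeq \bigl(X \otimes^L \mathbf{R}\Map^{\Sp}(X,Y)\bigr) \otimes^L \mathbf{R}\Map^{\Sp}(Y,Z) \to Y \otimes^L \mathbf{R}\Map^{\Sp}(Y,Z) \to Z, \]
where the last two arrows are the counits for the adjunctions associated with $X$ and $Y$ respectively.

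What remains is to verify the enriched category axioms of \cite[4.1.6]{Ho}. The unit laws are immediate from the construction, while associativity of composition reduces, through several adjunction and counit manipulations, to the pentagon coherence for the associativity equivalence. Here I expect the main technical obstacle: to pin this coherence down one must either set up a genuine multivariable version of Theorem 1.2 and apply uniqueness to left Quillen functors $\Sp \times \cdots \times \Sp \to \C$ sending $(\U, \ldots, \U)$ to $X$, or iterate the one-variable statement with careful bookkeeping of the counits and naturality squares. Either route is a formal exercise once the stable-frames machinery of the previous sections is in place; in the spirit of the integer analogy from the introduction, all higher coherence should follow from the universal property once it has been correctly phrased for functors of several variables, with no further homotopy-theoretic input needed beyond Theorem 1.2 and standard properties of derived adjunctions.
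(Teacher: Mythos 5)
Your overall strategy is sound and lands on the same universal-property mechanism as the paper, but the two arguments diverge exactly at the point you flag as the main obstacle, and the paper's resolution is worth contrasting with yours. The paper works with the module formulation of \cite[4.1.6]{Ho} throughout: it builds a two-variable derived functor $\Phi: \Ho(SF(\C)) \times \SHC \rightarrow \Ho(\C)$, chooses an inverse $\omega$ to the equivalence $ev_{\U}: \Ho(SF(\C)) \rightarrow \Ho(\C)$, and sets $\otimes = \Phi \circ (\omega \times \Id)$; closedness (your hom-spectra $\mathbf{R}\Map^{\Sp}(X,-)$) is then a one-line remark that derived left Quillen functors have right adjoints. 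The decisive difference is how coherence is obtained. The paper never verifies a pentagon at all: it observes that $\Ho(SF(\Sp))$ \emph{is} the category of derived left Quillen endofunctors of $\Sp$, and that under the paper's convention that $\Ho$ consists of cofibrant objects, composition of derived left Quillen functors is \emph{strictly} associative and unital; hence $\Ho(SF(\Sp))$ is a strict monoidal category acting strictly on $\Ho(SF(\C))$, and transporting this structure along the equivalence $ev_{\U}$ automatically yields a coherent (non-strict) monoidal/module structure. This buys the entire coherence package for the price of a transport-of-structure diagram chase. Of your two proposed fallbacks, be warned that the multivariable one is not available from the paper's machinery: there is no classification of Quillen bifunctors $\Sp \times \cdots \times \Sp \rightarrow \C$ here, and the paper explicitly disclaims that $- \wedge -: SF(\C) \times \Sp \rightarrow \C$ is a Quillen bifunctor. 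Your second fallback (iterate the one-variable uniqueness, reducing diagrams of derived natural transformations to their values at $\U$ via faithfulness of $ev_{\U}$) does work and is precisely the technique the paper uses later to prove monoidality of comparison functors in Theorem \ref{monoid}, but for the enrichment itself it is considerably more laborious than the strictification trick, and you would also need to check that every arrow in your pentagon lies in the allowed class of derived natural transformations and their inverses before faithfulness applies.
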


Similar results regarding the uniqueness of $\SHC$, disregarding the smash product, were obtained in \cite{SS}.  We will basically use the same technique, refining it somewhat. Regarding the smash product, results about monoidal uniqueness of $\SHC$ were obtained in \cite{Smonoidal}; in particular, it is proven there that all the monoidal model categories of spectra induce the same smash product on $\SHC$. We will recover this result in a stronger form by different means. Throughout the paper, our main reference will be Hovey's book on model categories \cite{Ho}, where the unstable version of the theory mentioned above is treated. However, the reader should be warned that we will adopt some conventions regarding model categories which do not agree with the ones in \cite{Ho}; see the next paragraph.

\subsection{Conventions}

Some of the following conventions may seem to be moot technical points, but since the concepts of derived functors and derived natural transformations are central to this paper, we need to be precise what we actually mean.\\

From now on, we will assume that the reader knows about model categories; good sources include \cite{DS} and, as mentioned above, \cite{Ho}. In particular, the reader should know about the model category of simplicial sets, about the homotopy category of a model category and about Quillen pairs and their derived functors; however, we will not need localisations. We will also assume fluent knowledge of basic category theory. \\
For definiteness and since there are small differences between the various sources, we adopt the definition of a model category as in \cite{DS}. Basically, this means that we do \textit{not} assume functorial factorizations or even make them part of the data as in \cite{Ho} - assuming functorial factorizations exist does not really simplify any of what follows, so there is no reason to assume them in the first place. In the end, our constructions will depend on choices; however, it boils down to choosing an inverse to an equivalence of categories, which is not really a choice; unlike in the unstable situation, this dependency on choices cannot be removed by making functorial factorizations part of the data. \\
Maybe more substantial is our convention regarding the homotopy category. We will almost exclusively be concerned with left Quillen functors (of course, there is always a right Quillen functor around, but this usually plays a background role, we just need to know that it exists); since these do not in general preserve all weak equivalences, but only those between cofibrant objects, one needs to replace cofibrantly when one wants to define derived functors. In particular, this leads to the annoying fact that the derived functor of the composition of two Quillen functors is usually not the composition of the two derived functors. We could circumvent this problem by refering to \cite[1.3.7]{Ho} which says that composition of derived functors is still coherently associative; however, we choose the following more convenient approach: For a model category $\C$, we consider the full subcategory $\Ho^{cof}(\C)$ of $\Ho(\C)$ spanned by the cofibrant objects; this category is equivalent to $\Ho(\C)$, but it has the advantage that a left Quillen functor $F: \C \rightarrow \D$ directly induces a functor $F: \Ho^{cof}(\C) \rightarrow \Ho^{cof}{\D}$ which really is $F$ on objects; by abuse of notation, we denote this functor by $F$ again. With this convention, the composition of derived functors is strict; this removes a few technically intricate issues, and we have to bother with lots of fibrant replacements already. Of course, it has the disadvantage that the definition of the derived functor of a right Quillen functor becomes more complicated; but we will never explicitly need this. 

\begin{definition}
In the remainder of this paper, we let $\Ho(\C)$ be the homotopy category of \emph{cofibrant} objects.
\end{definition}

Among the functors $\Ho(\C) \rightarrow \Ho(\D)$, there is a distinguished class: namely the derived functors of left Quillen functors. However, we will also have to consider natural transformations between derived Quillen functors, and it is less clear which ones are the "correct" class to take here. It is tempting to just take those natural transformations which are induced by natural transformations of the overlying Quillen functors; however, this has serious disadvantages; in particular it  may happen that a natural isomorphism of derived Quillen functors, stemming from a natural weak equivalence of overlying functors, is not invertible in this class. \\
We basically remove this difficulty in the crudest way possible: we allow natural transformations which are on-the-nose derived, but also the inverses to the derived natural weak equivalences and all possible compositions of these. This boils down to localizing at the natural weak equivalences; in all the cases of interest to us, we can interpret left Quillen functors and natural transformations as full subcategories of model categories with weak equivalences corresponding precisely to natural weak equivalences, which removes all set-theoretic difficulties in this.
\\
One more thing should be mentioned. In the spirit of the above discussion, a derived left Quillen functor ought to be a functor which can be written as a composition of directly derived left Quillen functors and inverses of (left) Quillen equivalences, and this is what one should usually adopt as a definition; however, in the cases relevant to us, there are sufficiently existence and uniqueness results for left Quillen functors so that it makes no difference whether one considers directly derived left Quillen functors or adopts the preceding definition, so we will stick to the former for simplicity. \\

\subsection{Acknowledgements}

This paper arose out of my diploma thesis, written under the supervision of Stefan Schwede at the University of Bonn. I would like to thank him for the supervision and for his many helpful answers to my questions, and his later help to get the material into publishable form.  I would also like to thank Arne Weiner for several fruitful discussions on the topics of this paper.

\section{Sequential spectra}
In this section, we will describe the category of spectra we want to work with. Working with this concrete model for $\SHC$ is crucial; the constructions we want to do depend not only on the homotopy category, but on the model category itself, and will fail to work for many models; in particular, we cannot replace simplicial sets by topological spaces in the following definition. 

\subsection{The category of sequential spectra}
\begin{definition}
A \emph{sequential spectrum} or just \emph{spectrum} of simplicial sets is a sequence $\{X_{n}\}_{n \geq 0}$ of pointed simplicial sets together with pointed maps $\sigma_n:  X_n \wedge S^1  \rightarrow X_{n+1}$. A map of spectra $f:X \rightarrow Y$ is a sequence of pointed maps $f_n: X_n \rightarrow Y_n$ such that the following diagram commutes for all n:
\[
\xymatrix{
				X_n \wedge S^1  \ar[rrr]^{id \wedge f_n} \ar[d]_{\sigma_n} & & & Y_n \wedge S^1 \ar[d]_{\sigma_n} \\	
				X_{n+1} \ar[rrr]_{f_{n+1}} & & & Y_{n+1}}
\]
The resulting category of sequential spectra of simplicial sets will be denoted as $\Sp$.
\end{definition}
For a simplicial set $X$, let $F_n(X)$ be the free spectrum generated by $X$ in level $n$ defined as follows: For $k < n$, $F_n(X)_k  =  *$, and for $k  > n$, $F_n(X)_k =   X \wedge S^{k-n}$. The structure maps are the  maps $X \wedge S^{k-n} \wedge S^1 \rightarrow X \wedge S^{k-n+1}$ identifying the suspension of $S^{k-n}$ with $S^{k-n+1}$. The spectrum $F_0(X)$ is called the \emph{suspension spectrum} of $X$ and is also denoted as $\Sigma^{\infty}(X)$. For a map $g: X \rightarrow Y$ of simplicial sets, there is also an induced map $F_n(g): F_n(X) \rightarrow F_n(Y)$ which is just the $(k-n)$-fold suspension of $g$ in level $k$. Thus, we have a functor $F_n: \SSet_* \rightarrow \Sp$. Also, for each $n$, we have an evaluation functor $Ev_n: \Sp \rightarrow \SSet_*$ which sends a spectrum $A$ to its $n$-th space $A_n$ and a map $f: A \rightarrow B$ of spectra to the map $f_n: A_n \rightarrow B_n$.  These functors are of course adjoint:
\begin{lem}
\label{freespectra}
$F_n: \SSet \rightleftharpoons \Sp: Ev_n$ is an adjoint functor pair.
\end{lem}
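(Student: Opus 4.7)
The plan is to exhibit a natural bijection between $\Hom_{\Sp}(F_n(X), A)$ and $\Hom_{\SSet_*}(X, A_n)$ by writing down explicit maps in both directions and checking they are mutually inverse. The key observation is that the free spectrum $F_n(X)$ has structure maps that are literally suspension identifications $X \wedge S^{k-n} \wedge S^1 = X \wedge S^{k-n+1}$, so a spectrum map out of $F_n(X)$ is forced in every level by its value in level $n$.

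More precisely, I would first define the forward map $\phi: \Hom_{\Sp}(F_n(X), A) \to \Hom_{\SSet_*}(X, A_n)$ simply by $\phi(f) = f_n$, using that $F_n(X)_n = X$. For the backward map $\psi$, given $g: X \to A_n$, I would define $\psi(g): F_n(X) \to A$ levelwise: in level $k < n$, $\psi(g)_k: * \to A_k$ is forced to be the basepoint; in level $n$, $\psi(g)_n = g$; and in level $n+j$ for $j \geq 1$, $\psi(g)_{n+j}: X \wedge S^j \to A_{n+j}$ is the iterated adjoint
\[
\psi(g)_{n+j} \;=\; \sigma^A_{n+j-1} \circ (\sigma^A_{n+j-2} \wedge \id_{S^1}) \circ \cdots \circ (\sigma^A_n \wedge \id_{S^{j-1}}) \circ (g \wedge \id_{S^j}),
\]
obtained by smashing $g$ with $S^j$ and then repeatedly applying the structure maps of $A$. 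That $\psi(g)$ is genuinely a map of spectra is an immediate check: since the structure maps of $F_n(X)$ are identities on $X \wedge S^{k-n+1}$, the required compatibility square for $\psi(g)$ is satisfied by construction.

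Next I would verify that $\phi$ and $\psi$ are mutually inverse. The composition $\phi \circ \psi = \id$ is trivial from the definition at level $n$. For $\psi \circ \phi = \id$, given any $f: F_n(X) \to A$, I would show by induction on $j \geq 0$ that $f_{n+j}$ must agree with the iterated adjoint of $f_n$. The base case $j=0$ is tautological; the inductive step uses precisely that the compatibility square for $f$ between levels $n+j$ and $n+j+1$ collapses on the source side (the structure map of $F_n(X)$ is the identity), forcing $f_{n+j+1} = \sigma^A_{n+j} \circ (f_{n+j} \wedge \id_{S^1})$. Naturality of $\phi$ in both $X$ and $A$ then follows by inspection.

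There is really no substantial obstacle here; the only thing to watch is the bookkeeping of indices and the convention about $\wedge S^1$ versus $S^1 \wedge$ (which is harmless once fixed). The content of the lemma is the tautology that $F_n$ is, by construction, the left adjoint to $Ev_n$ — it is defined precisely to make the indicated squares commute with the minimum amount of extra data, namely a single pointed map out of $X$ in level $n$.
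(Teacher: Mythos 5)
Your proof is correct and is exactly the standard argument the paper has in mind when it dismisses the lemma as ``straightforward'': a map of spectra out of $F_n(X)$ is determined by its level-$n$ component because the structure maps of $F_n(X)$ are isomorphisms, and the iterated-adjoint formula gives the inverse bijection. Nothing is missing; the paper simply omits these details.
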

\begin{proof}
This is straightforward.
\end{proof}

The spectrum $F_0 S^0$ which has the $n$-sphere $S^n$ in level $n$ with structure maps the identifications $S^n \wedge S^1 \cong S^{n+1}$ is called the \emph{sphere spectrum} and will be denoted by $\U$. This is the spectrum which will later on play the role of the unit. \\
Sp is also a simplicial category in a natural way. For a spectrum $A$ and a simplicial set $L$, we define the spectrum $A \wedge L$ by smashing levelwise with $L$, with structure maps the structure maps of $A$ smashed with the identity of $L$.  The simplicial Hom-set of two spectra $A,B$ is the simplicial set which has in level $n$ the set $\Hom_{\Sp}(A \wedge \Dn_+, B)$, with simplicial structure maps induced by the structure maps in the cosimplicial object $A \wedge \Delta[-]$ which gives simplicial structure maps since it stands in the contravariant variable. The simplicial mapping spectrum $A^L$ is again given by applying $( -)^L$ levelwise and to the structure maps. 

\subsection{The homotopy theory of spectra}
We have a level model structure on Sp which is induced by the model structure on $\SSet_*$:  A map $f: X \rightarrow Y$ of spectra is a \emph{level weak equivalence} respectively \emph{level fibration} if all $f_n$ are weak equivalences respectively fibrations of simplicial sets, and $f$ is a cofibration if $f_0$ is a cofibration and the induced map $X_{n+1} \cup_{X_n \wedge S^1} Y_n \wedge S^1 \rightarrow Y_{n+1}$ is a cofibration for all $n$. 
 \\
For stable homotopy theory, the homotopy category of this model category is too large; speaking loosely, it should not matter what happens in low dimensions, but it certainly does for level weak equivalences. To repair this,  define the homotopy groups of a spectrum $A$ as
\[
\pi_k(A) = \colim_{n}(\pi_{k+n}|A_{n}|)
\]
for any \textit{integer} $k$ where the colimit is taken over the maps
\[
\pi_{k+n}|A_{n}| \stackrel{- \wedge S^1}{\rightarrow} \pi_{k+n+1}[A_{n}\wedge S^1| \stackrel{\sigma_{n}}{\rightarrow} \pi_{k+n+1}|A_{n+1}|
\]
A map $f: A \rightarrow B$ of spectra induces maps on the homotopy groups $\pi_{k}(f): \pi_{k}(A) \rightarrow \pi_{k}(B)$ since $f$ induces compatible maps $\pi_{k+n}|A_{n}| \rightarrow \pi_{k+n}|B_{n}|$. Call a map of spectra a \textit{$\pi_{*}$-isomorphism} if it induces isomorphisms on all homotopy groups.\\
\begin{thm}
\label{shc}
There is a model structure on $\Sp$ with weak equivalences the $\pi_*$-isomorphisms and with the same cofibrations as in the level model structure.
\end{thm}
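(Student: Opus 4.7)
The plan is to follow the classical Bousfield--Friedlander construction. The first step is to establish the \emph{level} model structure, whose existence follows from lifting the pointed simplicial set model structure along the evaluation functors $Ev_n$; this is straightforward once one notes that $\Sp$ has all small limits and colimits computed levelwise, and that the factorization and lifting axioms reduce to the simplicial set case via the free/evaluation adjunctions of Lemma \ref{freespectra}. The guiding principle for the stable structure is that on $\Omega$-spectra, $\pi_{*}$-isomorphisms coincide with level equivalences, so one would like to define stable fibrations as those level fibrations whose ``failure to be an $\Omega$-spectrum'' is preserved under base change.

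Concretely, I would construct an ``Omega-spectrification'' endofunctor $Q \colon \Sp \to \Sp$ together with a natural transformation $\eta \colon \mathrm{Id} \to Q$ such that $QX$ is an $\Omega$-spectrum (the adjoints of its structure maps $X_n \to \Omega X_{n+1}$ are weak equivalences) and $\eta_X$ is a $\pi_{*}$-isomorphism. A standard choice is $(QX)_n = \colim_k \Omega^k |X_{n+k}|^{\mathrm{fib}}$, using a levelwise fibrant replacement; these colimits interact correctly with homotopy groups because homotopy commutes with filtered colimits. The key lemma is that a $\pi_{*}$-isomorphism between $\Omega$-spectra is a level equivalence, which is immediate from the definition of $\pi_{*}$ since the colimit stabilizes. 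Consequently $f$ is a $\pi_{*}$-isomorphism if and only if $Qf$ is a level equivalence. Declare $f \colon X \to Y$ to be a \emph{stable fibration} if it is a level fibration and the naturality square of $\eta$ on $f$ is a levelwise homotopy pullback.

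For the axioms themselves, MC1--MC3 are immediate: $\pi_{*}$-isomorphisms are defined by isomorphisms of abelian groups and so satisfy 2-out-of-3 and are closed under retracts. For MC4, one half is easy because an acyclic stable fibration is automatically a level acyclic fibration: if $f$ is a stable equivalence then $Qf$ is a level equivalence, and combined with the homotopy pullback condition this forces $f$ itself to be a level equivalence, reducing the lift to the level case. The other half of MC4 together with the factorization axiom MC5 requires a small-object argument applied to a set of generating stable acyclic cofibrations obtained by augmenting the generating level acyclic cofibrations with suitably cofibrant models of the stabilization maps $\lambda_n \colon F_{n+1}(X \wedge S^1) \to F_n(X)$ (as $X$ ranges over domains and codomains of the generating cofibrations of $\SSet_*$), whose images under $Q$ are level equivalences and whose right lifting property detects $\Omega$-spectra up to level fibration.

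The main obstacle is the technical closure work that makes the small-object argument go through: namely, showing that $\pi_{*}$-isomorphisms are closed under pushouts along cofibrations and under transfinite composition, and that the augmented generating set actually generates all stable acyclic cofibrations. This rests on careful interactions between homotopy groups and filtered colimits, together with a long-exact-sequence / cofibration-sequence analysis for pushouts of level cofibrations of simplicial sets; this is where the bulk of the work lies, but once these closure properties are in place the remaining verifications are formal.
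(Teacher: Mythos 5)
Your proposal is correct and takes essentially the same route as the paper: the paper's proof of Theorem \ref{shc} is simply a citation of Bousfield--Friedlander and of Goerss--Jardine, Chapter X, and what you sketch (the level structure, the $\Omega$-spectrification functor $Q$, stable fibrations as level fibrations whose $\eta$-naturality square is a levelwise homotopy pullback, and the closure properties feeding the small-object argument) is precisely the content of those references.
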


\begin{proof}
See \cite{BF} or \cite[X]{GJ}.
\end{proof}

From now on, if we write $\SHC$, we mean $\Ho(\Sp)$. \\
The fibrant objects in this model structure are exactly the levelwise Kan $\Omega$-spectra, i.e., those spectra of levelwise Kan fibrant simplicial sets where all adjoint structure maps $A_n \rightarrow \Omega A_{n+1}$ are weak equivalences of simplicial sets, and the acyclic fibrations are the level acyclic fibrations since we did not change the cofibrations when we stabilized.  \\
Another way to obtain this stable model structure is to localize the level model structure along the maps 	$F_n(S^1) \rightarrow F_{n-1}(S^0)$ , $n \geq 0$, adjoint to the identity of $S^1$, see \cite{Hir} for details on localization; note that the level model structure on Sp has all the nice properties needed for localizing, i.e. it is cellular and left proper. The reason that the weak equivalences in this localized model structure are the $\pi_*$-isomorphisms is that every spectrum $X$ is $\pi_*$-isomorphic to an $\Omega$-spectrum, see \cite[X.4]{GJ}.  Introducing the stable model structure as this localization does not save work when trying to prove the model axioms (at least once one wants to see that weak equivalences are  exactly the $\pi_*$-isomorphisms), but the maps $F_n(S^1) \rightarrow F_{n-1}(S^0)$ will play an important role in the following; and the fact that they determine the stable model structure will be a somewhat reassuring, though not really necessary, fact. \\

Unsurprisingly, with both of the model structures defined above, Sp is a simplicial model category with the simplicial structure defined as above, i.e., the axiom SM7 holds: For a cofibration $f: A \rightarrow B$ and a fibration $g: X \rightarrow Y$, the induced map 

\[
\Map_{\Sp}(B,X) \stackrel{(f^*,g_*)}{\rightarrow} \Map_{\Sp}(A,X) \times_{\Map_{\Sp}(A,Y)} \Map_{\Sp}(B,Y)
\]
is a fibration of simplicial sets which is trivial if either $f$ or $g$ is.
Furthermore, by the levelwise definition of fibrations in the level model structure, it is evident that the adjoint pair $(F_n, Ev_n)$ is a Quillen pair for the level model structure, and since we do not change cofibrations when going to the stable model structure, it will also be a Quillen pair for the stable model structure. \\

Left Quillen functors out of the category $\Sp$ will play an important role in the following; the following lemma provides a useful criterion for deciding whether an adjoint pair $\Sp \leftrightharpoons \C$ for any model category $\C$ is Quillen.

\begin{lem}
\label{detection}
\emph{\cite[4.2]{SS}}
Let $\C$ be a model category and $F: \Sp \rightleftharpoons \C: G$ an adjoint functor pair. This adjoint pair is a Quillen functor for the stable model structure on $\Sp$ if and only if the following three conditions hold: \\[0.2cm]
i) $G$ takes acyclic fibrations to level acyclic fibrations of spectra \\
ii) $G$ takes fibrant objects to $\Omega$ - spectra\\
iii) $G$ takes fibrations between fibrant objects to level fibrations 
\end{lem}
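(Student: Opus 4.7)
My plan is to prove the equivalence in two directions. The forward direction is essentially bookkeeping: if $(F,G)$ is Quillen for the stable structure, then $G$ preserves fibrations and acyclic fibrations, and conditions (i)--(iii) follow upon observing that stable and level acyclic fibrations coincide (since the stable and level structures share the same cofibrations), that stably fibrant spectra are precisely the $\Omega$-spectra, and that stable fibrations are a fortiori level fibrations (since every level acyclic cofibration is a stable acyclic cofibration).

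For the substantial reverse direction, assume (i)--(iii). Condition (i) already gives, by adjunction, that $F$ preserves cofibrations, since cofibrations are characterized by right lifting against level (= stable) acyclic fibrations. It remains to show $F$ preserves stable acyclic cofibrations. The stable model structure is cofibrantly generated with generating trivial cofibrations having cofibrant source, so it suffices to show $Fj$ is a weak equivalence in $\C$ whenever $j: A \to B$ is a stable acyclic cofibration with $A$ (hence $B$) cofibrant. I will argue this by a Yoneda-type argument in the homotopy category, which in turn hinges on a path object construction.

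The crucial step is the following. For each fibrant $X \in \C$, factor the diagonal $X \to X \times X$ as an acyclic cofibration $X \to X^I$ followed by a fibration $X^I \to X \times X$, obtaining a path object in $\C$. I claim $GX \to GX^I \to GX \times GX$ is a path object for $GX$ in stable $\Sp$. All three of $X, X^I, X \times X$ are fibrant in $\C$, so by (ii) (and the trivial observation that a product of $\Omega$-spectra is an $\Omega$-spectrum) their $G$-images are $\Omega$-spectra. Condition (iii) makes $GX^I \to GX \times GX$ a level fibration, and between $\Omega$-spectra this is automatically a stable fibration. Finally, either projection $X^I \to X$ is a fibration (using that $X$ is fibrant) and a weak equivalence (by 2-of-3 with the section $X \to X^I$), hence an acyclic fibration; condition (i) then yields that $GX^I \to GX$ is a stable acyclic fibration, and 2-of-3 applied to the composite identity $GX \to GX^I \to GX$ shows $GX \to GX^I$ is a stable weak equivalence.

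With this path object in hand, the natural adjunction bijection $\Hom_\C(FA, X) \cong \Hom_\Sp(A, GX)$ descends to homotopy classes, because right homotopies on the $\C$-side through $X^I$ correspond under adjunction to right homotopies on the $\Sp$-side through $GX^I$. Hence $[FA, X]_{\Ho(\C)} \cong [A, GX]_{\Ho(\Sp)}$ whenever $A$ is cofibrant and $X$ is fibrant. Since $GX$ is stably fibrant and $j: A \to B$ is a stable weak equivalence, $[B, GX] \to [A, GX]$ is a bijection; hence $[FB, X] \to [FA, X]$ is a bijection for every fibrant $X \in \C$. By Yoneda in $\Ho(\C)$, $Fj$ is an isomorphism there, and since $FA, FB$ are cofibrant this forces $Fj$ to be a weak equivalence in $\C$. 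Combined with $Fj$ being a cofibration, $(F,G)$ is a Quillen pair. The main obstacle will be the path object construction, where all three conditions (i)--(iii) enter in an essential way; once that is in place, everything else is the standard interplay between adjunction, homotopy classes, and Yoneda.
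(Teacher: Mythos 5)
Your argument is correct, but note that the paper itself offers no proof of this lemma at all: it is imported verbatim from \cite[4.2]{SS}, so there is nothing internal to compare against. The route taken in the cited source (and the one most readers would expect) is shorter: one invokes the general criterion that an adjoint pair is Quillen as soon as the right adjoint preserves acyclic fibrations and preserves fibrations between fibrant objects; condition (i) handles the first clause because stable and level acyclic fibrations coincide, and (ii) together with (iii) handles the second because in a left Bousfield localization a level fibration between local (here, $\Omega$-spectra) objects is automatically a stable fibration. Your proof in effect re-derives that criterion in this special case by hand, via the path-object construction $GX \to GX^I \to GX \times GX$ and the induced bijection $[FA,X] \cong [A,GX]$, followed by Yoneda and saturation of the weak equivalences. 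Everything checks out: the verification that $GX^I$ is a good path object uses (i), (ii), (iii) exactly where you say it does, and the reduction to stable acyclic cofibrations with cofibrant source is legitimate since the stable structure is cofibrantly generated with generating acyclic cofibrations having cofibrant sources (the paper records that the level structure is cellular and the stable one is its localization). What your approach buys is self-containedness and transparency about where each hypothesis enters; what it costs is length, plus a quiet reliance on the same localization fact (a level fibration between $\Omega$-spectra is a stable fibration) that the short argument uses openly, so you have not actually avoided it.
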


\section{Spectra and adjunctions}
In this section, we describe why we want to work with the category Sp: It is easy to describe left adjoints starting in SSet or $\SSet_*$, and this is inherited by Sp. This is completely category-theoretical and has nothing to do with homotopy theory or model structures. Since adjoints come up again and again in the following, we adopt the convention that the direction of an adjunction is the direction of its left adjoint.  We will make heavy use of the following proposition:

\begin{prop}
Let $D$ be the cosimplicial object in $\SSet_*$ given by the pointed standard simplices $\Dn_+$ and the standard maps between them. Then for any cocomplete, pointed category $\C$, the category of adjunctions $\SSet_* \leftrightharpoons \C$ is equivalent to the category $\C^{\Delta}$ of cosimplicial objects in $\C$ via associating to an adjunction the image of $D$ under the left adjoint.
\end{prop}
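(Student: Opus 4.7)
The plan is to construct an explicit quasi-inverse $\Psi \colon \C^{\Delta} \to \mathrm{Adj}(\SSet_*, \C)$ to the evaluation-at-$D$ functor $\Phi$, and to verify that both composites are naturally isomorphic to the identity. The whole argument is a pointed version of the standard Yoneda/density presentation of left adjoints out of a presheaf category.

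\textbf{Construction of $\Psi$.} Given a cosimplicial object $X \colon \Delta \to \C$, I define $F_X \colon \SSet_* \to \C$ by the pointed coend
\[
F_X(K) \;:=\; \int^{[n] \in \Delta} K_n \wedge X^n ,
\]
where each level $K_n$ is regarded as a pointed set (basepoint $=$ the degenerate image of the basepoint of $K$) and $K_n \wedge X^n$ denotes the pointed copower $\coprod_{\sigma \in K_n \setminus \{*\}} X^n$, which exists because $\C$ is pointed and cocomplete. Its right adjoint $G_X \colon \C \to \SSet_*$ is defined levelwise by $G_X(c)_n := \Hom_{\C}(X^n, c)$, pointed at the zero morphism, with simplicial operators induced from the cosimplicial operators of $X$. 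A routine end/coend computation yields the adjunction isomorphism $\Hom_{\C}(F_X(K), c) \cong \Hom_{\SSet_*}(K, G_X(c))$. A morphism $X \to Y$ in $\C^{\Delta}$ gives rise to a natural transformation $F_X \to F_Y$ via the induced map on coends, so $\Psi$ is a functor.

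\textbf{The composite $\Phi \circ \Psi$.} I compute $F_X(\Delta[n]_+)$ directly: restricted to non-basepoint simplices, the indexing category of the coend is the ordinary simplex category $\Delta/\Delta[n]$, which has terminal object $\Id_{[n]}$, so the coend collapses to $X^n$, naturally in $[n]$ and in $X$. This gives $\Phi \Psi \cong \Id_{\C^{\Delta}}$.

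\textbf{The composite $\Psi \circ \Phi$.} Given an adjoint pair $(F \dashv G)$ with $X := F \circ D$, I must compare $F$ with $F_X$. Both are left adjoints, hence colimit-preserving, and they agree on the subcategory $\{\Delta[n]_+\}$ by construction. The key input is the pointed co-Yoneda identity
\[
K \;\cong\; \int^{[n] \in \Delta} K_n \wedge \Delta[n]_+ \qquad \text{in } \SSet_*,
\]
which expresses that $\SSet_* = \Fun(\Delta^{\mathrm{op}}, \Set_*)$ is freely generated under pointed colimits by the representables $\Delta[n]_+$. Applying $F$ and using that both $F$ and the pointed copower commute with colimits in their respective variables yields $F(K) \cong F_X(K)$, naturally in $K$ and in $(F, G)$.

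\textbf{Main obstacle.} The one genuinely technical point is handling the degenerate basepoint simplex correctly so that $F_X$ really is a left adjoint — in particular, so that it sends the zero object $\Delta[0]$ of $\SSet_*$ to the zero object of $\C$. This forces the coend to be taken in the pointed sense, so that the basepoint simplex contributes nothing to $K_n \wedge X^n$; when $K = \Delta[0]$ every simplex is the basepoint, and the coend correctly evaluates to the initial object of $\C$. Once the pointed coend is set up this way, everything else is formal manipulation along the lines of the unpointed result identifying left adjoints $\SSet \to \C$ with cosimplicial objects, transported to $\Set_*$-valued presheaves.
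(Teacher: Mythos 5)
Your proposal is correct and follows essentially the same route as the paper, which also defines the right adjoint levelwise by $R(c)_n = \Hom_{\C}(X^n,c)$ and obtains the left adjoint by gluing a pointed simplicial set canonically out of the standard simplices (deferring the coend/co-Yoneda details to \cite[3.1.6]{Ho}). Your write-up merely makes the pointed copower and the two co-Yoneda collapses explicit, which is a fair expansion of the paper's sketch rather than a different argument.
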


\begin{proof}
The point is that $\SSet = Set^{\Delta^{op}}$ and that $\C$ is pointed. Given a cosimplicial object $T$, we define the associated right adjoint $R$ by $R(X)_n = \Hom_{\C}(T_n, X)$ with simplicial structure maps induced by the cosimplicial structure maps of $T$. The left adjoint is essentially obtained by the fact that we know what it should do on $D$ and we can glue every simplicial set in a canonical way out of the standard simplices; see \cite[3.1.6]{Ho} for details.
\end{proof}

For less awkward notation, we make the following definition:

\begin{definition}
For a cosimplicial object $X$ in $\C$, we write $(X \wedge -, \Map(X,-))$ for the associated adjunction $\SSet_* \leftrightharpoons \C$.
\end{definition}

So we know how to describe adjunctions out of $\SSet_*$ and natural transformations between them. For spectra, the point is that one can write a spectrum as a coequalizer of free spectra in a canonical way:

\begin{prop}
\label{coeq}
For a spectrum $A$, there is a coequalizer diagram
\[
\xymatrix{
			\bigvee_{n} F_n A_{n-1} \wedge S^1 \ar@<0.6ex>[rr]^-T \ar@<-0.6ex>[rr]_-H & & \bigvee_{n} F_n A_n \ar[rr] & & A}
\]
where $H$ is induced by the structure maps of $A$ and $T$ is induced by the maps $F_n A_{n-1} \wedge S^1 \rightarrow F_{n-1} A_{n-1}$ adjoint to the identity of $A_{n-1} \wedge S^1$.
\end{prop}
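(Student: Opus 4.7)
My plan is to verify the coequalizer via its universal property, using the adjunction $(F_n, Ev_n)$ of Lemma \ref{freespectra} to translate everything into statements about collections of maps of pointed simplicial sets. The central observation is that the coequalizer relation imposed by $T$ and $H$ should precisely reproduce the compatibility condition in the definition of a map of spectra.

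First, I would record a preliminary identification. Since the smash product with a simplicial set $L$ is applied levelwise, one has a canonical isomorphism $F_n(X) \wedge L \cong F_n(X \wedge L)$ for any pointed simplicial set $X$; in particular $F_n A_{n-1} \wedge S^1 \cong F_n(A_{n-1} \wedge S^1)$. Using the adjunction $(F_n, Ev_n)$, a map $\bigvee_n F_n A_n \to B$ into an arbitrary spectrum $B$ corresponds bijectively to a sequence of pointed maps $(f_n : A_n \to B_n)$, with no compatibility condition yet imposed. Likewise, a map $\bigvee_n F_n(A_{n-1} \wedge S^1) \to B$ corresponds to a sequence of pointed maps $(g_n : A_{n-1} \wedge S^1 \to B_n)$.

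Next, I would unravel $H$ and $T$ under these bijections. The map $H$ is, component by component, $F_n(\sigma^A_{n-1}): F_n(A_{n-1} \wedge S^1) \to F_n A_n$ where $\sigma^A_{n-1}$ is the structure map of $A$. Composing with the map classifying $(f_n)$ and taking adjoints, the resulting family is $(f_n \circ \sigma^A_{n-1})$. For $T$, the adjoint of the identity $A_{n-1} \wedge S^1 \to A_{n-1} \wedge S^1$ is a map $F_n(A_{n-1} \wedge S^1) \to F_{n-1}(A_{n-1})$ which in each level $k \geq n$ is the canonical identification $A_{n-1} \wedge S^1 \wedge S^{k-n} \cong A_{n-1} \wedge S^{k-n+1}$. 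Composing with the map classifying $(f_n)$ and extracting the adjoint family gives $(\sigma^B_{n-1} \circ (f_{n-1} \wedge S^1))$. Thus a collection $(f_n)$ descends to the coequalizer iff $f_n \circ \sigma^A_{n-1} = \sigma^B_{n-1} \circ (f_{n-1} \wedge S^1)$ for all $n \geq 1$, which is exactly the condition for $(f_n)$ to define a map of spectra $A \to B$.

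Finally, I would verify that the obvious map $\bigvee_n F_n A_n \to A$, namely the one induced by the identities $A_n \to A_n$ under $(F_n, Ev_n)$, does coequalize $T$ and $H$ (this is just the structure-map identity in $A$), and that every map out of $\bigvee_n F_n A_n$ that coequalizes them factors through $A$ in a unique way (by the translation above). This establishes the universal property of the coequalizer. The main obstacle, and really the only one, is bookkeeping: keeping the adjunction $(F_n, Ev_n)$ aligned with the canonical identifications $F_n X \wedge S^1 \cong F_n(X \wedge S^1)$ and correctly identifying $T$ on each level, but no homotopy-theoretic input is needed — this is a purely categorical statement about the free/evaluation adjunctions.
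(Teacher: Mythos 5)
Your proof is correct and follows exactly the paper's own (much terser) argument: translate maps out of the wedges of free spectra through the adjunctions $(F_n, Ev_n)$ into level-wise families, and observe that coequalizing $T$ and $H$ is precisely the compatibility square in the definition of a map of spectra. Your identification of the adjoint families for $T$ and $H$ is accurate, so this is just a more detailed write-up of the same proof.
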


\begin{proof}
This is straightforward. We have maps $F_n A_n \rightarrow A$ adjoint to the identity of $A_n$, the wedge of these maps is the map $\bigvee_{n} F_n A_n \rightarrow A$. To check the universal property, note that a map $\bigvee_{n} F_n A_n \rightarrow B$ is adjoint to a sequence of maps $A_n \rightarrow B_n$; this is a map of spectra if and only if the map $\bigvee_{n} F_n A_n \rightarrow B$ is compatible with the two coequalizer maps. 
\end{proof}

Now let $\C$ be any cocomplete category and $L: Sp \rightleftharpoons \C: R$ an adjoint pair. By precomposing with the adjoint pairs $(F_n, Ev_n)$, one obtains a sequence of adjunctions $L_n: \SSet_* \rightleftharpoons \C: R_n$. Furthermore, we obtain natural transformations $\tau_n: L_{n}( - \wedge S^1) \rightarrow L_{n-1}$ as follows: For a simplicial set $K$, we have a map $F_n(K \wedge S^1) \rightarrow F_{n-1} K$ of spectra adjoint to the identity of $K$; by applying $L$, we obtain the desired natural transformations.  The adjoint natural transformation $\eta_n: R_{n-1} \rightarrow \Omega \circ R_n$ is given on an object $X$ of $\C$ as (the adjoint of) the $(n-1)$-th structure map of the spectrum $R(A)$, a map $R_{n-1} X \rightarrow \Omega R_n X$. Note that this is somewhat reminiscent of a spectrum with the $L_n$ as "spaces" and the $\tau_n$ as "structure maps".

\begin{prop}
\label{adjunction}
An adjunction $L: \C \rightleftharpoons \C: R$ is uniquely determined up to isomorphism by the $L_n$ and $\tau_n$, or equivalently the $R_n$ and $\eta_n$. Conversely, given a sequence of adjunctions $(L_n: \SSet_* \rightleftharpoons \C: R_n)$ and natural transformations $\tau_n$ as above, there is an adjunction $L: \Sp \rightleftharpoons \C: R$ inducing the $L_n$ and $\tau_n$. 
\end{prop}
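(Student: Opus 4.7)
The plan is to exploit Proposition \ref{coeq}: since a spectrum is canonically a coequalizer of wedges of free spectra, and a left adjoint preserves all colimits, any adjunction out of $\Sp$ is determined by its restriction to the free spectra together with the maps $F_n(K \wedge S^1) \to F_{n-1}(K)$. This is precisely the data $(L_n, \tau_n)$.

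For uniqueness, let $L: \Sp \rightleftharpoons \C : R$ be an adjunction. Applying $L$ to the coequalizer of Proposition \ref{coeq} gives a coequalizer in $\C$
\[
\xymatrix{
\bigvee_n L_n(A_{n-1} \wedge S^1) \ar@<0.6ex>[rr] \ar@<-0.6ex>[rr] & & \bigvee_n L_n(A_n) \ar[rr] & & L(A)}
\]
where the bottom parallel arrow is $\bigvee_n L_n(\sigma_{n-1})$ and the top one is $\bigvee_n \tau_n(A_{n-1})$. On morphisms, $L(f)$ is determined by the induced map on the underlying wedges. Hence $L$ is reconstructed from $(L_n, \tau_n)$ up to canonical isomorphism, and by adjointness also from $(R_n, \eta_n)$.

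For the converse, given $(L_n, \tau_n)$, define $L(A)$ to be the above coequalizer and $L(f)$ on morphisms by the evident induced map between coequalizers; functoriality is immediate from the naturality of $\tau_n$ and the functoriality of the $L_n$. Define $R: \C \to \Sp$ by setting $R(X)_n = R_n(X)$, with structure map $R_n(X) \wedge S^1 \to R_{n+1}(X)$ obtained as the adjoint of $\eta_{n+1}(X): R_n(X) \to \Omega R_{n+1}(X)$; this is well-defined as a spectrum by construction. To produce the adjunction, one computes
\[
\Hom_{\C}(L(A), X) \;\cong\; \mathrm{eq}\Bigl(\prod_n \Hom_{\C}(L_n A_n, X) \rightrightarrows \prod_n \Hom_{\C}(L_n(A_{n-1} \wedge S^1), X)\Bigr)
\]
by the universal property of the coequalizer, and then uses each adjunction $L_n \dashv R_n$ together with the $(-\wedge S^1, \Omega)$-adjunction in $\SSet_*$ to identify the right side with $\Hom_{\Sp}(A, R(X))$, where the equalizer condition translates exactly into compatibility with the structure maps of $A$ and $R(X)$. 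A direct check shows the resulting bijection is natural in $A$ and $X$, and that precomposing with $(F_n, Ev_n)$ recovers $(L_n, R_n)$ and sends the canonical $F_n(K \wedge S^1) \to F_{n-1}(K)$ to $\tau_n$.

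The main obstacle is bookkeeping in the last step: verifying that the equalizer description of $\Hom_{\C}(L(A), X)$ matches a map of spectra $A \to R(X)$ requires carefully tracing through the adjunctions $L_n \dashv R_n$, the definition of the structure maps of $R(X)$ via $\eta_n$, and the definition of $\tau_n$ via the unit $F_n(K \wedge S^1) \to F_{n-1}(K)$ — the sign/direction conventions in these three places must all line up, but once one has done this, naturality and the identification of the restricted adjunctions is formal.
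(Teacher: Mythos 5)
Your proposal is correct and follows essentially the same route as the paper: the paper also defines $R$ levelwise via the $R_n$ and $\eta_n$, constructs $L$ as the coequalizer from Proposition \ref{coeq}, and verifies the adjunction by translating compatible families of maps $L_nA_n \to X$ through the adjunctions $L_n \dashv R_n$ into a map of spectra $A \to R(X)$. The only cosmetic difference is that you run the uniqueness argument on the left adjoint via the coequalizer while the paper runs it on the right adjoint levelwise; these are interchangeable, as the statement itself notes.
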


\begin{proof}
We prove that $R$ is uniquely determined by the $R_n$ and $\eta_n$. For an object $X$ of $\C$, we have by definition $ev_n R(X) = R_n(X)$, and the structure map $R_{n-1} X \rightarrow \Omega R_n X$ is just $\eta_n(X)$; hence $R$ is determined up to iomorphism, which also determines $L$ up to natural isomorphism. \\
For the existence part, define a functor $R: \C \rightarrow \Sp$ as follows: For an object $X$ of $\C$, the $n$-th space of the spectrum $R(X)$ is $R_n X$, and the structure maps are $\eta_n(X): R_{n-1} X \rightarrow \Omega R_n X$. To define the left adjoint $L$, we use the coequalizer diagram in \ref{coeq}. The behaviour of a potential left adjoint on everything showing up in this coequalizer diagram is determined - it has to commute with coproducts, on free spectra $F_n K$, it has to be $L_n$, $H$ is a map induced by a map of free spectra, and $L(T)$ is a wedge on the maps $\tau_n (A_{n-1}): L_n (A_{n-1} \wedge S^1) \rightarrow L_{n-1} A_{n-1}$. Hence it is natural to define $L(A)$ via the following coequalizer diagram:

\[
\xymatrix{
			\bigvee_{n} L_n (A_{n-1} \wedge S^1) \ar@<0.6ex>[rr]^-{\bigvee L_n(\sigma_n)}\ar@<-0.6ex>[rr]_-{\bigvee \tau_n(A_{n-1})} & & \bigvee_{n} L_n A_n \ar[rr] & & L(A)}
\]
where $\sigma_n: A_{n-1} \wedge S^1 \rightarrow A_n$ denotes the structure maps of $A$.
To see that $L$ and $R$ are indeed adjoint, assume we are given a map $L(A) \rightarrow X$. This map is by definition of $L$ as the coequalizer given by a sequence of maps compatible with the coequalizer diagram $L_n A_n \rightarrow X$, which in turn are adjoint to a sequence of maps $A_n \rightarrow R_n X$; compatibilty of the maps exactly translates into this being a map of spectra $A \rightarrow R(X)$.
\end{proof}

Similiarly, we can characterize natural transformations between adjunctions $\Sp \rightleftharpoons \C$:

\begin{prop}
Let $L, L': \Sp \rightarrow \C$ be two left adjoint functors. Then a natural transformation $\phi: L \rightarrow L'$ gives rise to a sequence of natural transformations $\phi_n: L_n \rightarrow L'_n$ such that $\tau_n \circ (\phi_{n} \wedge Id_{S^1}) =  \phi_{n-1} \circ \tau_n$ as natural transformations $L_{n}( - \wedge S^1) \rightarrow L_{n-1}$. Conversely, any such sequence of natural transformations gives rise to a unique natural transformation $L \rightarrow L'$ extending the given data.
\end{prop}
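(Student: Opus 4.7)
The plan is to mimic the proof of Proposition \ref{adjunction}: use the coequalizer presentation of a spectrum from Proposition \ref{coeq} to transport natural transformations between the ``free pieces'' $L_n$ to a natural transformation between $L$ and $L'$, and vice versa.

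First I would construct the forward direction. Given $\phi: L \to L'$, note that by definition $L_n = L \circ F_n$ and $L_n' = L' \circ F_n$, so setting $\phi_n := \phi \ast \Id_{F_n}$ produces natural transformations $\phi_n: L_n \to L_n'$. The compatibility condition with $\tau_n$ reduces to naturality of $\phi$ applied to the map $F_n(K \wedge S^1) \to F_{n-1}(K)$ of spectra adjoint to $\Id_K$: since $\tau_n$ was defined by applying $L$ (respectively $L'$) to this map, the square
\[
\xymatrix{
L_n(K \wedge S^1) \ar[r]^{\phi_n(K \wedge S^1)} \ar[d]_{\tau_n(K)} & L_n'(K \wedge S^1) \ar[d]^{\tau_n'(K)} \\
L_{n-1}(K) \ar[r]_{\phi_{n-1}(K)} & L_{n-1}'(K)
}
\]
commutes by naturality of $\phi$. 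This gives exactly the required relation.

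For the converse, suppose we are given $(\phi_n)$ satisfying the compatibility condition. For any spectrum $A$, apply Proposition \ref{coeq} and the construction of $L(A)$, $L'(A)$ as coequalizers from the proof of Proposition \ref{adjunction}. The wedge of maps $\bigvee_n \phi_n(A_n): \bigvee_n L_n A_n \to \bigvee_n L_n' A_n$ is a candidate for the map on the right-hand side of the coequalizer diagram. To pass it to the coequalizer, I must show that it equalizes the two parallel maps. On the ``$H$-branch'' this is immediate from naturality of $\phi_n$ applied to $\sigma_n$; on the ``$T$-branch'' it is precisely the hypothesis $\tau_n'(A_{n-1}) \circ \phi_n(A_{n-1} \wedge S^1) = \phi_{n-1}(A_{n-1}) \circ \tau_n(A_{n-1})$. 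Thus there is an induced map $\phi(A): L(A) \to L'(A)$, which is natural in $A$ because all the constituent pieces are. Uniqueness follows from the fact that $\phi_n(K) = \phi(F_n K)$ is forced by the construction, and the maps from $\bigvee_n L_n A_n$ determine $\phi(A)$ on the coequalizer.

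The only step with real content is the second one, and even there the key commutation is exactly the stated compatibility condition, so no genuine obstacle is expected; everything is coequalizer-formal once Propositions \ref{coeq} and \ref{adjunction} are in hand.
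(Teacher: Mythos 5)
Your argument is correct and is precisely the intended one: the paper's own "proof" is just "straightforward and left to the reader," and your filling-in via whiskering with $F_n$ for the forward direction and the coequalizer presentation of Proposition \ref{coeq} (together with the fact that $L$, $L'$ preserve colimits) for the converse is exactly the natural route, with the compatibility condition appearing where it should on the $T$-branch. The only micro-detail worth spelling out is that the constructed $\phi$ really does restrict to $\phi_n$ on $F_nK$ (trace the $n$-th wedge summand, whose structure map to $F_nK$ is the identity), which also makes the epimorphism argument for uniqueness airtight.
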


\begin{proof}
This is straightforward and left to the reader.
\end{proof}

Since we know precisely how to describe left adjoints out of $\SSet_*$, we can now explicitly describe the category of adjunctions from Sp to $\C$. For a cosimplicial object, $X \wedge (- \wedge S^1): \SSet \rightarrow \C$ is again a left adjoint; we write $\Sigma X$ for the cosimplicial object associated to this adjunction; compare with section 6. Clearly, $\Sigma$ is a functor $\C^{\Delta} \rightarrow \C^{\Delta}$. 

\begin{definition}
A \emph{$\Sigma$-cospectrum} is a sequence of cosimplicial objects $X_n$ together with structure maps $\Sigma X_n \rightarrow X_{n-1}$; a morphism of cospectra $X \rightarrow Y$ is a sequence of morphisms $X_n \rightarrow Y_n$ compatible with the structure maps, like in a spectrum. Denote the resulting category as $C^{\Delta}(\Sigma)$. 
\end{definition}

The results we have obtained so far in this section directly lead to the following:  

\begin{thm}
\label{mainthm}
For a cocomplete category $\C$, the category $\C^{\Delta}(\Sigma)$ of $\Sigma$-cospectra is equivalent to the category $Ad(\Sp, \C)$ of adjunctions $\Sp \rightleftharpoons \C$ with natural transformations as morphisms.
\end{thm}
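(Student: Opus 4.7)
The plan is to bootstrap the theorem from the earlier propositions of the section. The first proposition has already established an equivalence $\Phi: Ad(\SSet_*, \C) \simeq \C^{\Delta}$, sending an adjunction $(L_n, R_n)$ to the cosimplicial object $X_n := L_n(\Delta[-]_+)$, with inverse $X \mapsto (X \wedge -, \Map(X,-))$. The definition of $\Sigma$ as a functor $\C^{\Delta} \to \C^{\Delta}$ has been made precisely so that, under this equivalence, precomposition by $-\wedge S^1$ on the $\SSet_*$-side corresponds to $\Sigma$ on the cosimplicial side; this is essentially tautological once unwound but should be stated as a lemma for clarity.

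With this in hand, I would proceed in the obvious dictionary style. Proposition \ref{adjunction} gives an equivalence between $Ad(\Sp, \C)$ and the category of sequences of adjunctions $(L_n: \SSet_* \rightleftharpoons \C)_{n \geq 0}$ together with natural transformations $\tau_n : L_n(-\wedge S^1) \to L_{n-1}$. Applying $\Phi$ levelwise converts each $L_n$ into a cosimplicial object $X_n$, and converts each $\tau_n$ (a natural transformation between left adjoints $\SSet_* \rightrightarrows \C$) into a morphism of cosimplicial objects $\Sigma X_n \to X_{n-1}$, by the equivalence $\Phi$ on morphisms combined with the identification $\Phi(L_n(-\wedge S^1)) = \Sigma X_n$. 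This datum is precisely a $\Sigma$-cospectrum, so on objects we have a bijection up to isomorphism.

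For morphisms, I would invoke the last proposition of the section: a natural transformation $L \to L'$ between two left adjoints $\Sp \to \C$ corresponds to a compatible family $\phi_n : L_n \to L'_n$ of natural transformations of left adjoints $\SSet_* \to \C$. Applying $\Phi$ on morphisms, each $\phi_n$ becomes a morphism of cosimplicial objects $X_n \to X'_n$, and the compatibility condition $\tau'_n \circ (\phi_n \wedge \mathrm{Id}_{S^1}) = \phi_{n-1} \circ \tau_n$ translates into the square expressing compatibility with the structure maps $\Sigma X_n \to X_{n-1}$ and $\Sigma X'_n \to X'_{n-1}$ of the cospectra. This is by definition a morphism of $\Sigma$-cospectra, so we get a fully faithful and essentially surjective functor, hence an equivalence.

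The only genuinely non-formal step is verifying that $\Phi$ is natural with respect to the $-\wedge S^1$ operation, in the sense that $\Phi(L_n \circ (-\wedge S^1)) \cong \Sigma \Phi(L_n)$ naturally in $L_n$, and correspondingly on natural transformations. This is where I would spend the most care, but it is forced by the definition of $\Sigma$ given just before the theorem: $\Sigma X$ was defined as the cosimplicial object associated to the adjunction $X \wedge (-\wedge S^1)$, so the identification is a matter of chasing through the equivalence $\Phi$. Once this compatibility is recorded, everything else is a formal assembly of the preceding propositions, and no further coherence issues arise because all the structure in a $\Sigma$-cospectrum (objects $X_n$, maps $\Sigma X_n \to X_{n-1}$, and compatible families of morphisms between such) is already present on the adjunction side.
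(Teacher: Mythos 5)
Your proposal is correct and matches the paper's (implicit) argument: the paper states the theorem as a direct consequence of the equivalence $Ad(\SSet_*,\C)\simeq\C^{\Delta}$, Proposition \ref{adjunction}, the proposition on natural transformations, and the definition of $\Sigma$ as the cosimplicial object associated to $X\wedge(-\wedge S^1)$, which is exactly the assembly you carry out. The compatibility $\Phi(L_n\circ(-\wedge S^1))\cong\Sigma\Phi(L_n)$ that you flag as the only non-formal step is indeed the crux, and it holds by the very definition of $\Sigma$ given just before the theorem.
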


In Sp, one has a standard cospectrum, given by $F_n(D)$ in degree $n$ with $D$ the standard cosimplicial object in $\SSet_*$. The associated cospectrum to a left adjoint $\Sp \rightarrow \C$ is the image of this cospectrum in $\C$. Again, to avoid awkward notation, we make the following definition:

\begin{definition}
For a $\Sigma$-cospectrum $X$, we write $(X \wedge -, \Map(X, -))$ for the associated adjunction. We denote the $m$-th cosimplicial level of the coimplicial object $X_n$ by $X_{n,m}$.
\end{definition}

Note that none of this depended on actual properties of $\SSet_*$ or $- \wedge S^1$ besides the fact that $- \wedge S^1$ is a left adjoint; one may define $L$-cospectra in an arbitrary cocomplete category $\C$ with an adjunction $L: \C \rightleftharpoons \C: R$ as sequences of objects $X_n$ of $\C$ with structure maps $L X_n \rightarrow X_{n+1}$, and all of the above remains true for this category of spectra. In all cases of interest of us - in particular for $\Sigma$-cospectra - the underlying category $\C$ is actually a model category and $L$ is left Quillen, and then cospectra form a model category again. Since this will be important to us, we formalize the definition:

\begin{definition}
Let $\C$ be a model category and let $L: \C \rightleftharpoons \C: R$ be a Quillen pair. A \emph{cospectrum} with respect to this data is a sequence $X_0, X_1, \dots$ of objects of $\C$ together with structure maps $\sigma_n: LX_n \rightarrow X_{n-1}$. A morphism $f:X \rightarrow Y$ of cospectra is a sequence of maps $f_n: X_n \rightarrow Y_n$ such that for all n, the diagram
\[
\xymatrix{
		LX_n \ar[rrr]^{\sigma_n} \ar[d]_{Lf_n} & & & X_{n-1} \ar[d]^{f_{n-1}}\\
		LY_n \ar[rrr]_{\sigma_n} & & & Y_{n-1}
}
\]
commutes. We denote this category by $\C(L)$ and will call the objects $L$-cospectra.
\end{definition}
We will also consider the category of cospectra up to degree $k$:

\begin{definition}
With notation as above, an \emph{L-cospectrum up to degree $k$} is a sequence of objects $X_0$, $X_1$,\dots,$X_k$ in $\C$ together with structure maps $L X_m \rightarrow X_{m-1}$ for $m = 1 \ldots k$. A morphism $X \rightarrow Y$ is a sequence of maps $X_m \rightarrow Y_m$ compatible with the structure maps as above. We denote the resulting category as $\C(L, k)$.
\end{definition}
Both these constructions again yield model categories in a natural way:

\begin{thm}
\label{cospectra}
There is a level model structure on $\C(L)$ and on $\C(L,k)$ for any $k$ where a map $f: X \rightarrow Y$ is a 
\begin{itemize}
\item weak equivalence resp. cofibration if and only if all $f_n$ are weak equivalences resp. cofibrations 
\item a (trivial) fibration if $f_0$ is and for all $n \geq 0$, the induced map $X_n \rightarrow Y_n \times_{RY_{n-1}} RX_{n-1}$ is a (trivial) fibration. 
\end{itemize}
\end{thm}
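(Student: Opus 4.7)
I will verify the model category axioms MC1--MC5 directly. The bulk of the work is a level-by-level induction that simultaneously builds factorizations, solves lifting problems, and checks compatibility of the three distinguished classes. The argument for $\C(L,k)$ is identical, truncated at stage $k$.

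Limits and colimits in $\C(L)$ are computed levelwise: since $L$ is a left adjoint it commutes with colimits, so a levelwise colimit inherits a natural cospectrum structure, and limits pose no difficulty. Two-of-three and closure under retracts are immediate for weak equivalences and cofibrations from their levelwise definition, and for fibrations from the fact that a retract of a pullback cone is again a pullback cone.

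For the factorization axiom MC5, given $f: X \to Y$, I would construct a cofibration/trivial-fibration factorization $X \to Z \to Y$ by induction on the level. Factor $f_0$ in $\C$ to start. At stage $n$, the previously built $Z_{n-1}$ comes with a structure map $LZ_{n-1}\to Z_{n-2}$ and maps $X_{n-1} \to Z_{n-1} \to Y_{n-1}$; assemble the map $X_n \to Y_n \times_{RY_{n-1}} RZ_{n-1}$ whose first component is $f_n$ and whose second is the $L \dashv R$ adjoint of the composite $LX_n \to X_{n-1} \to Z_{n-1}$. Compatibility with the pullback cone reduces, via naturality of the adjunction, to the identity $X_{n-1} \to Z_{n-1} \to Y_{n-1} = f_{n-1}$. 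Factor this map in $\C$ as a cofibration $X_n \to Z_n$ followed by a trivial fibration $Z_n \to Y_n \times_{RY_{n-1}} RZ_{n-1}$, and take the structure map $LZ_n \to Z_{n-1}$ to be adjoint to the projection $Z_n \to R Z_{n-1}$. A direct adjunction computation confirms that $X \to Z$ and $Z \to Y$ are cospectrum morphisms of the required types. The (trivial cofibration, fibration) factorization uses the analogous construction in $\C$. For MC4 the parallel inductive scheme applies: at stage $n$ the previously constructed lift $l_{n-1}: B_{n-1} \to X_{n-1}$, combined with the adjoint of $L B_n \to B_{n-1} \xrightarrow{l_{n-1}} X_{n-1}$, reduces the square at level $n$ to a single lifting problem in $\C$ for $i_n$ against $X_n \to Y_n \times_{R Y_{n-1}} R X_{n-1}$, solvable by the hypothesis on $p$.

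The main obstacle -- and the one place where the Quillen pair hypothesis is genuinely used -- is verifying that (fibration $\cap$ weak equivalence) coincides with trivial fibration as defined, so that the three classes fit together coherently. By induction on $n$, if all the matching-style maps of $f$ are trivial fibrations below stage $n$, then two-of-three shows that each $f_m$ with $m<n$ is itself a trivial fibration in $\C$; right Quillen-ness of $R$ then makes $Rf_{n-1}$ a trivial fibration, so the pullback projection $Y_n \times_{RY_{n-1}} RX_{n-1} \to Y_n$ is a trivial fibration, and a final application of two-of-three upgrades the matching-style map at stage $n$ from fibration to trivial fibration. The reverse direction falls out of the same induction: a trivial fibration of cospectra is levelwise a trivial fibration in $\C$, hence in particular a levelwise weak equivalence.
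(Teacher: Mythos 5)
Your proposal is correct and follows exactly the route the paper indicates: a direct, levelwise-inductive verification of the model axioms using only the model axioms in $\C$ (the paper gives no details, remarking only that the verification is straightforward though not short, works without cofibrant generation, and hinges on the single delicate point that the defined class of trivial fibrations coincides with the maps that are both fibrations and weak equivalences --- which is precisely what your final paragraph establishes). The one loose phrase is ``two-of-three shows that each $f_m$ with $m<n$ is itself a trivial fibration'': what is actually used there is that $f_m$ factors as the matching-style map followed by a base change of $Rf_{m-1}$, both already known to be trivial fibrations at that stage of the induction, so $f_m$ is a composite of trivial fibrations; this is cosmetic and does not affect the argument.
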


\begin{proof}
This is certainly not new; however, there seems to be no actual proof of this in printing. Note that we do not assume that $\C$ is cofibrantly generated; the theorem holds for any model category. However, the proof of the model axioms, just using the model axioms in $\C$, is straightforward (though not precisely short); note that one needs to check that "trivial fibration" as stated is indeed the same as a fibration and a weak equivalence.
\end{proof}

\section{Stable frames in simplicial model categories}

In this section, we prove a slight variant of our main theorem for simplicial model categories; this serves as a good warm-up to the more complicated general case and is sufficient for many purposes. Readers interested in topological model categories should note that, replacing Sp by topological spectra (in some suitable closed monoidal category of topological spaces like k-spaces) and "simplicial" by "topological" in all that follows in this section, the proofs go through virtually unchanged. We refer to \cite{GJ} for a definition and basic facts concerning simplicial model categories.

In this section we will only be concerned with functors compatible with the simplicial structure in the following sense:

\begin{definition}
Let $\C$, $\D$ be pointed simplicial (model) categories. An adjunction $L: \C \rightleftharpoons \D: R$ together wit natural isomorphisms $L(X \wedge K) \cong L(X) \wedge K$ for any object $X$ of $\C$ and any simplicial set $K$; or equivalently natural isomorphisms $R(Y^K) \cong R(Y)^K$ for $Y$ in $\D$; is called \emph{strong simplicial} if the natural isomorphisms make two coherence diagram commutative: One equating the two ways of getting from $L(X) \wedge (K \wedge L)$ to $L((X \wedge K) \wedge L)$ and one equating the two ways of getting from $L(X) \wedge \Delta[0]$ to $L(X)$; or equivalently, if the corresponding coherence diagrams for $R$ commute.
\end{definition}

Given a cocomplete pointed simplicial category $\C$, we now want to describe the strong simplicial adjunctions $L: \Sp \rightleftharpoons \C: R$. If $(L,R)$ is strong simplicial, then it follows that $(L_n = L \circ F_n, R_n = Ev_n \circ R)$ is a strong simplicial adjunction $\SSet_* \rightleftharpoons \C$ - and such an adjunction is completely determined up to isomorphism by the image of $S^0$, an object of $\C$. Similarly, the natural transformation $\tau_n: L_n \circ(- \wedge S^1) \rightarrow L_{n-1}$ is a simplicial natural transformation, hence simply given by a morphism in $\C$ and we obtain a $S^1$-cospectrum in $\C$. Similar arguments can be made for natural transformations, leading to the following:

\begin{prop}
Let $\C$ be a pointed simplicial model category. Then the category of strong simplicial adjunctions $\Sp \rightleftharpoons \C$ and simplicial natural transformations is equivalent to the category $\C(S^1)$ of $S^1$-cospectra. 
\end{prop}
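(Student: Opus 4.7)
The plan is to leverage Theorem \ref{mainthm} and exploit the strong simplicial condition to collapse the cosimplicial data of a $\Sigma$-cospectrum down to mere objects. The key reduction is the observation that a strong simplicial left adjoint $L : \SSet_* \to \C$ is determined up to canonical isomorphism by $A := L(S^0)$: the chain of natural isomorphisms $L(K) \cong L(S^0 \wedge K) \cong L(S^0) \wedge K$ forces $L$ to be $A \wedge -$ with its canonical strong simplicial structure, and a simplicial natural transformation between two such $L, L'$ is determined by its $S^0$-component $A \to A'$. Under the equivalence of adjunctions $\SSet_* \rightleftharpoons \C$ with $\C^{\Delta}$, the strong simplicial adjunctions therefore correspond precisely to the cosimplicial objects of the form $A \wedge \Dm_+$ for $A \in \C$.

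Next I construct the functor $\Phi$ from strong simplicial adjunctions to $\C(S^1)$. Given $(L, R) : \Sp \rightleftharpoons \C$, set $X_n := L_n(S^0) = L(F_n S^0)$. Since each $L_n$ is strong simplicial, the natural transformation $\tau_n : L_n(- \wedge S^1) \to L_{n-1}$ is determined by its $S^0$-component, a morphism $\sigma_n : X_n \wedge S^1 \to X_{n-1}$ in $\C$, so $\Phi(L) := (X_n, \sigma_n)$ is an $S^1$-cospectrum. On a simplicial natural transformation $\phi : L \to L'$, $\Phi$ returns the sequence of $S^0$-components $\phi_n(S^0) : X_n \to X'_n$; compatibility with the cospectrum structure maps follows from the compatibility of the $\phi_n$ with the $\tau_n$ recorded in the proposition immediately following \ref{adjunction}. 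For the inverse, given an $S^1$-cospectrum $(X_n, \sigma_n)$, I put $L_n := X_n \wedge (-)$ and $\tau_n(K) := \sigma_n \wedge \mathrm{id}_K$; Proposition \ref{adjunction} then assembles this data into an adjunction $L : \Sp \rightleftharpoons \C : R$, and the construction of $\Phi$ recovers the original cospectrum.

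The main verification is that this $L$ is itself strong simplicial. Using the coequalizer presentation of a spectrum $A$ from Proposition \ref{coeq}, together with the fact that $- \wedge K$ is a left adjoint and hence commutes with the coequalizer, smashing the coequalizer defining $L(A)$ with $K$ yields the coequalizer defining $L(A \wedge K)$, the two diagrams agreeing termwise via the strong simpliciality of each $L_n$. The induced natural isomorphism $L(A) \wedge K \cong L(A \wedge K)$ then inherits the strong simplicial coherence conditions by the universal property of the coequalizer, which reduces those conditions to the corresponding ones for each $L_n$; these hold by construction. Combined with the evaluate-at-$S^0$ argument from the first paragraph, this delivers fullness and faithfulness of $\Phi$, hence the claimed equivalence. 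The only step requiring real care is this final strong-simpliciality verification for $L$; the rest is a direct transcription of the non-simplicial framework through the strong simplicial reduction.
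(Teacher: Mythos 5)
Your proposal is correct and follows essentially the same route as the paper, which proves this proposition via the discussion immediately preceding it: a strong simplicial adjunction $\SSet_* \rightleftharpoons \C$ is determined up to isomorphism by the image of $S^0$, the simplicial natural transformations $\tau_n$ are likewise determined by their $S^0$-components, and Proposition \ref{adjunction} assembles the resulting $S^1$-cospectrum data back into an adjunction $\Sp \rightleftharpoons \C$. The only addition is your explicit coequalizer verification that the assembled left adjoint is again strong simplicial, a point the paper leaves implicit.
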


In this section, we will drop the $S^1$, just speaking of cospectra. \\
Given such a cospectrum $X$, we can describe the associated right adjoint as follows: The $n$-th space of $R(Y)$ is $\Map(X_n, Y)$, where $\Map$ denotes the simplicial mapping space in $\C$, with structure maps adjoint to $\Map(X_{n-1}, Y) \rightarrow \Map(X_n \wedge S^1,Y) \cong \Omega \Map(X_n, Y)$.  We will denote the adjoint pair associated to $X$ by $(X \wedge -, \Map(X,-))$. Given a simplicial left adjoint $L: \Sp \rightarrow \C$, we obtain the associated cospectrum $X$ by $X_n = L(F_n S^0)$ with structure maps the images of the canonical maps $F_n S^1 \rightarrow F_{n-1} S^0$. \\
Now we want to apply \ref{detection} to find necessary and sufficient conditions for a cospectrum $X$ to present a Quillen pair. For part i) of \ref{detection}, we note that for an acyclic fibration $f$ in $\C$, $\Map(X, f)$ is levelwise of the form $\Map(X_n, f)$; the only natural condition ensuring that $\Map(X, f)$ is level acyclic is that all $X_n$ are cofibrant, which is also necessary since $X_n = X \wedge F_n S^0$ needs to be cofibrant if $X \wedge -$ is left Quillen. For ii), note that for fibrant $Y$, $\Map(-, Y)$ preserves weak equivalences between cofibrant objects, hence the structure map $\Map(X_{n-1}, Y) \rightarrow \Map(X_n \wedge S^1,Y) \cong \Omega \Map(X_n, Y)$ would be a weak equivalence if $X_{n} \wedge S^1 \rightarrow X_{n-1}$  were a weak equivalence. This is also necessary since this is the image of the canonical map $F_n S^1 \rightarrow F_{n-1} S^0$ which is a $\pi_*$-isomorphism, being an isomorphism in all but one degrees, and this needs to be preserved if $X \wedge -$ is left Quillen. Also note that $\Map(X,Y)$ is automatically levelwise Kan if $Y$ is fibrant and $X$ cofibrant. Part iii) is no new condition since fibrations induce fibrations $\Map(X_n, f)$ anyway. 
Hence we arrive at the following definition:

\begin{definition}
A cospectrum $X$ is a \emph{simplicial stable frame} if it is pointwise cofibrant with weak equivalences $X_n \wedge S^1 \rightarrow X_{n-1}$ as structure maps
\end{definition} 

The discussion above yields:

\begin{prop}
\label{stableframing}
The cospectrum $X$ represents a Quillen pair $\Sp \leftrightharpoons \C$ if and only if it is a simplicial stable frame.
\end{prop}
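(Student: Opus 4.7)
The plan is to apply the detection lemma \ref{detection} to the adjunction $(X \wedge -, \Map(X,-))$ associated to a cospectrum $X$, and to translate each of its three conditions on the right adjoint $\Map(X, -)$ into a condition on $X$, using only the simplicial model axiom SM7 and Ken Brown's lemma. Much of the analysis has already been sketched in the paragraph preceding the statement; what remains is to organize those observations into a clean equivalence.

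For the sufficiency direction, assume $X$ is a simplicial stable frame, so each $X_n$ is cofibrant and each structure map $\sigma_n \colon X_n \wedge S^1 \to X_{n-1}$ is a weak equivalence. For condition (i) of \ref{detection}, if $f \colon Y \to Y'$ is an acyclic fibration in $\C$, then by SM7 each $\Map(X_n, f)$ is an acyclic fibration of simplicial sets, so $\Map(X, f)$ is a level acyclic fibration; condition (iii) is checked identically. For condition (ii), with $Y$ fibrant the $n$th adjoint structure map of the spectrum $\Map(X, Y)$ is $\Map(\sigma_n, Y) \colon \Map(X_{n-1}, Y) \to \Map(X_n \wedge S^1, Y) \cong \Omega \Map(X_n, Y)$. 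Since $X_{n-1}$ and $X_n \wedge S^1$ are cofibrant (smashing a cofibrant object with the cofibrant $S^1$ preserves cofibrancy), and $\sigma_n$ is a weak equivalence between them, Ken Brown's lemma applied to the contravariant functor $\Map(-, Y)$ (which sends cofibrations to fibrations and acyclic cofibrations to acyclic fibrations by SM7) shows that $\Map(\sigma_n, Y)$ is a weak equivalence of Kan complexes; hence $\Map(X, Y)$ is an $\Omega$-spectrum.

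For necessity, assume $(X \wedge -, \Map(X, -))$ is a Quillen pair. The free spectrum $F_n S^0$ is cofibrant in the level model structure on $\Sp$, hence in the stable one; since $X_n \cong (X \wedge -)(F_n S^0)$ via the strong simplicial identification, cofibrancy of $X_n$ follows from left Quillenness. Next, the canonical map $F_n S^1 \to F_{n-1} S^0$ is one of the distinguished maps at which the level structure is localized to obtain the stable one, so it is a $\pi_*$-isomorphism between cofibrant spectra. Its image under $X \wedge -$, which by the strong simplicial coherence is precisely $\sigma_n \colon X_n \wedge S^1 \to X_{n-1}$, is therefore a weak equivalence in $\C$ by Ken Brown's lemma applied to the left Quillen functor $X \wedge -$.

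The delicate point is really only bookkeeping: one must track how the strong simplicial coherence isomorphisms identify $L(F_n K) \cong X_n \wedge K$ and $L_n \circ (- \wedge S^1) \cong (X_n \wedge -) \circ (- \wedge S^1)$ on the nose, so that the canonical spectrum map $F_n S^1 \to F_{n-1} S^0$ is sent exactly to $\sigma_n$, and dually that the $n$th adjoint structure map of $\Map(X, Y)$ is exactly $\Map(\sigma_n, Y)$. Once these identifications are in place, the argument reduces entirely to SM7 and Ken Brown's lemma, and no further homotopical input beyond the detection lemma is needed.
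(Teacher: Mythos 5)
Your proof is correct and follows essentially the same route as the paper, which establishes this proposition through the discussion preceding the definition of a simplicial stable frame: apply the detection lemma \ref{detection}, verify conditions (i) and (iii) via SM7, handle condition (ii) by noting that $\Map(-,Y)$ preserves weak equivalences between cofibrant objects for fibrant $Y$, and obtain necessity from the cofibrancy of $F_n S^0$ and the fact that $F_n S^1 \to F_{n-1} S^0$ is a $\pi_*$-isomorphism between cofibrant spectra preserved by any left Quillen functor. Your version is somewhat more explicit about the Ken Brown's lemma step and the strong simplicial bookkeeping, but the mathematical content is identical.
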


Now we have a good combinatorial handle on left Quillen functors out of $\Sp$ into a stable simplicial model category $\C$, we may ask whether such functors exist. It turns out there are plenty of them:

\begin{prop}
\label{existencestable}
Let $\C$ be a stable simplicial model category and $X$ a cofibrant-fibrant object of $\C$. Then there is a simplicial stable frame $A$ with $A_0 = X$. Additionally, one can arrange $A$ to be fibrant in the model structure on cospectra, i.e., such that $A$ is level fibrant and the structure maps $A_n \rightarrow \Omega A_{n-1}$ are acyclic fibrations. Thus there is a left Quillen functor $A \wedge -: \Sp \rightarrow \C$ with $A \wedge \U \cong X$.
\end{prop}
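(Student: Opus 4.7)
The plan is to construct $A$ inductively, level by level. Set $A_{0} = X$, which is cofibrant-fibrant by assumption. Suppose I have built cofibrant-fibrant objects $A_{0},\dots,A_{n-1}$ together with acyclic fibrations $p_{k}\colon A_{k}\to\Omega A_{k-1}$ whose adjoints $A_{k}\wedge S^{1}\to A_{k-1}$ are weak equivalences ($1\le k\le n-1$). Since $A_{n-1}$ is fibrant, SM7 applied to the cofibration $\ast\to S^{1}$ of pointed simplicial sets and the fibration $A_{n-1}\to\ast$ shows that $\Omega A_{n-1}=A_{n-1}^{S^{1}}$ is fibrant in $\C$. I would then use the factorization axiom to factor the canonical map $\ast\to\Omega A_{n-1}$ as a cofibration $\ast\hookrightarrow A_{n}$ followed by an acyclic fibration $p_{n}\colon A_{n}\twoheadrightarrow\Omega A_{n-1}$. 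Then $A_{n}$ is cofibrant (cofibration out of the cofibrant initial object), and it is fibrant because $A_{n}\to\Omega A_{n-1}\to\ast$ is a composition of two fibrations; by construction $p_{n}$ is an acyclic fibration.

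The crux is to verify that the adjoint structure map $\sigma_{n}:=\hat p_{n}\colon A_{n}\wedge S^{1}\to A_{n-1}$ is a weak equivalence in $\C$. This map factors as
\[
A_{n}\wedge S^{1}\xrightarrow{p_{n}\wedge \mathrm{id}}\Omega A_{n-1}\wedge S^{1}\xrightarrow{\,\epsilon\,}A_{n-1},
\]
where $\epsilon$ is the counit of the simplicial adjunction $-\wedge S^{1}\dashv (-)^{S^{1}}$. Since $p_{n}$ is a weak equivalence from the cofibrant object $A_{n}$ to the (not necessarily cofibrant) object $\Omega A_{n-1}$, we may view $p_{n}$ as a cofibrant replacement of $\Omega A_{n-1}$, and then the composite above is a representative of the derived counit $(L\Sigma)(R\Omega)(A_{n-1})\to A_{n-1}$ in $\Ho(\C)$. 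Stability of $\C$ is precisely the statement that this derived counit is a natural isomorphism in $\Ho(\C)$, so $\hat p_{n}$ is a weak equivalence.

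Iterating this step produces the cospectrum $A=(A_{n},\sigma_{n})$. By construction $A$ is pointwise cofibrant with weak-equivalence structure maps, hence a simplicial stable frame in the sense of the previous definition; additionally each $A_{n}$ is fibrant and each $p_{n}$ is an acyclic fibration, which is exactly the additional fibrancy assertion. Proposition \ref{stableframing} then supplies a left Quillen functor $A\wedge -\colon\Sp\to\C$, and since $\U=F_{0}S^{0}$ and the cospectrum associated to $A\wedge -$ has $A_{0}$ in degree zero, we obtain $A\wedge\U\cong A_{0}=X$.

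The main obstacle is the verification that $\hat p_{n}$ is a weak equivalence: everything else is a routine lifting/factorization argument. The clean way past it is the reformulation above, identifying $\hat p_{n}$ with a model for the derived counit of $\Sigma\dashv\Omega$ on $\Ho(\C)$ and then invoking stability; without stability the construction would stall at this point, as there is no reason in a general pointed simplicial model category for such a desuspension to exist.
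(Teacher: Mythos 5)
Your proposal is correct and follows essentially the same route as the paper: inductively factor $\ast\to\Omega A_{n-1}$ into a cofibration followed by an acyclic fibration and use stability to conclude that the adjoint structure map $A_n\wedge S^1\to A_{n-1}$ is a weak equivalence (the paper phrases this last step via the criterion that for a Quillen equivalence a map from a cofibrant object to a fibrant one is a weak equivalence iff its adjoint is, while you phrase it via the derived counit; these are the same fact). The only quibble is that stability is the assertion that the derived adjunction $(\Sigma,\Omega)$ is an equivalence, which \emph{implies} the derived counit is an isomorphism rather than being ``precisely'' that statement, but the implication you need is the correct one.
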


\begin{proof}
Since $X$ is fibrant, also $\Omega X$ is fibrant. Choose a cofibrant-fibrant object $A_1$ together with an acyclic fibration $A_1 \rightarrow \Omega X$  by factoring $* \rightarrow \Omega X$ into a cofibration followed by an acyclic fibration. Since $\C$ is stable, $(S^1, \Omega)$ is a Quillen equivalence. Thus, since $A_1$ and $X$ are cofibrant-fibrant, the adjoint map $A_1 \wedge S^1 \rightarrow X$ is also a weak equivalence. Repeating this, one gets cofibrant-fibrant objects $A_n$ with weak equivalences $A_n \wedge S^1 \rightarrow A_{n-1}$ whose adjoints are fibrations. Note that, although the factorizations provide fibrant frames, not all frames need to be fibrant, as in the following example. However, for fibrant stable frames, also the adjoint structure maps $X_n \rightarrow \Omega X_{n-1}$ are weak equivalences since $(S^1, \Omega)$ is a Quillen equivalence; this is the main reason that the simplicial case is easier to handle.
\end{proof}

\begin{example}
\label{SymSpec}
 Let $Sp^{\Sigma}$ be the category of symmetric spectra of simplicial sets as introduced in ~\cite{HSS} with the injective stable model structure \cite[3.4]{HSS}. We have the forgetful functor $R: \Sp^{\Sigma} \rightarrow \Sp$ which just forgets the symmetric group actions. This functor commutes with simplicial mapping spaces and with limits, so it has a chance to be the right adjoint of a strong simplicial adjunction. Let us see where a possible left adjoint $L$ should map the $F_n(S^0)$.
Our various adjunctions give
\[
\Sp^{\Sigma}(L(F_n(S^0)), X) \cong \Sp(F_n(S^0), RX) \cong \SSet_*(S^0, (RX)_n) \cong \Sp^{\Sigma}(F_n^{\Sigma}(S^0), X)
\]
 where $F_n^{\Sigma}(S^0)$ is the free symmetric spectrum on $S^0$ in dimension $n$, see \cite[2.5]{HSS}. The Yoneda lemma forces that $L(F_n(S^0)) \cong F_n^{\Sigma}(S^0)$, so let us see how we can extend the $F_n^{\Sigma}(S^0)$ to a stable frame. The obvious candidate for the structure maps are the maps $F_n^{\Sigma}(S^0) \wedge S^1 \rightarrow F_{n-1}(S^0)$ adjoint to the identity of $S^1$. These are stable equivalences in the stable model structure on $\Sp^{\Sigma}$, see \cite[3.1.10]{HSS}, and all free symmetric spectra are cofibrant.  So we have indeed a Quillen pair $(F,G)$ induced by this stable framing; one easily checks that $G$ is indeed the forgetful functor.
\end{example}

\subsection{Uniqueness of stable frames}
Of course, in general there are many choices to build a stable frame on a cofibrant-fibrant object in a stable model category, since already in the level fibrant framings constructed in Theorem \ref{existencestable} we had many choices of factoring the maps, and  there may be  more non-level fibrant stable frames. But "up to homotopy", they are unique, as we will see in this section.

\begin{prop}
\label{fullfaithful}
Let $f: X_0 \rightarrow Y_0$ be a morphism in a simplicial stable model category $\C$ with $X_0$ cofibrant and $Y_0$ cofibrant-fibrant, and let $X$ be a simplicial stable frame on $X_0$, $Y$ a fibrant simplicial stable frame on $Y_0$. Then there is a map $F: X \rightarrow Y$ extending $f$, and any two such extensions are homotopic in $\C(S^1)$. Additionally, if $f$ was a weak equivalence, so is $F$.
\end{prop}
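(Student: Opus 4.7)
The strategy is to build the extension $F$ level by level, then to compare two extensions by an inductive construction of a simplicial homotopy. The key observation driving everything is that a fibrant simplicial stable frame $Y$ has the property that each adjoint structure map $\tilde{\sigma}_n^Y \colon Y_n \to \Omega Y_{n-1}$ is an acyclic fibration: Theorem \ref{cospectra} applied to $Y \to *$ forces these maps to be fibrations, while the stable frame condition $Y_n \wedge S^1 \xrightarrow{\sim} Y_{n-1}$ together with stability of $\C$ (which makes $(-\wedge S^1, \Omega)$ a Quillen equivalence) makes them weak equivalences.

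For the existence of $F$, I would set $F_0 = f$ and proceed by induction. Given $F_0, \ldots, F_{n-1}$ compatible with the cospectrum structure maps, the desired $F_n$ is obtained as a lift in the square
\[
\xymatrix{
\emptyset \ar[r] \ar[d] & Y_n \ar[d]^{\tilde{\sigma}_n^Y} \\
X_n \ar[rr]_-{\Omega F_{n-1} \circ \tilde{\sigma}_n^X} & & \Omega Y_{n-1}
}
\]
which exists because $X_n$ is cofibrant by Proposition \ref{stableframing} and $\tilde{\sigma}_n^Y$ is an acyclic fibration. Passing back through the $(-\wedge S^1, \Omega)$ adjunction, $F_n$ then fits into the required cospectrum square at level $n$ by construction.

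For uniqueness, given two extensions $F$ and $F'$ of $f$, I would construct a simplicial homotopy $H \colon X \wedge \Delta[1]_+ \to Y$ in $\C(S^1)$ inductively. Take $H_0$ to be the constant homotopy at $f$. Given $H_0, \ldots, H_{n-1}$, the map $H_n \colon X_n \wedge \Delta[1]_+ \to Y_n$ is obtained as a lift in the square
\[
\xymatrix{
X_n \wedge (\partial\Delta[1])_+ \ar[rr]^-{F_n \vee F'_n} \ar[d] & & Y_n \ar[d]^{\tilde{\sigma}_n^Y} \\
X_n \wedge \Delta[1]_+ \ar[rr] & & \Omega Y_{n-1}
}
\]
whose bottom arrow is the composite of the adjoint structure map of the cylinder cospectrum $X \wedge \Delta[1]_+$ with $\Omega H_{n-1}$. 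The left vertical is a cofibration by SM7 (applied to $(\partial\Delta[1])_+ \hookrightarrow \Delta[1]_+$ and the cofibrancy of $X_n$), the right vertical is an acyclic fibration, and the square commutes because the cospectrum compatibility holds for both $F,F'$ at level $n$ and for $H_{n-1}$. Since cofibrations and weak equivalences in $\C(S^1)$ are detected levelwise, $X \wedge \Delta[1]_+$ is a valid cylinder object on $X$, so the assembled $H$ is a bona fide homotopy in $\C(S^1)$.

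For the final statement, assume $f$ is a weak equivalence and show by induction that every $F_n$ is as well. With $F_{n-1}$ a weak equivalence, in the commutative square
\[
\xymatrix{
X_n \wedge S^1 \ar[rr]^-{\sigma_n^X} \ar[d]_{F_n \wedge \mathrm{id}} & & X_{n-1} \ar[d]^{F_{n-1}} \\
Y_n \wedge S^1 \ar[rr]_-{\sigma_n^Y} & & Y_{n-1}
}
\]
all objects are cofibrant, both horizontal maps are weak equivalences by the stable frame condition, and the right vertical is a weak equivalence by induction, so two-out-of-three forces $F_n \wedge \mathrm{id}$ to be a weak equivalence. Since $\C$ is stable, $\Sigma = [-\wedge S^1]$ is an equivalence on $\Ho(\C)$ and hence reflects isomorphisms, so $F_n$ itself is a weak equivalence. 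The main technical subtlety throughout is keeping track of the cospectrum compatibility under the $(-\wedge S^1, \Omega)$ adjunction; once this bookkeeping is in place, both the existence and uniqueness arguments reduce to standard lifting problems against the acyclic fibrations $\tilde{\sigma}_n^Y$.
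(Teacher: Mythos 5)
Your proof is correct, and it reaches the same destination as the paper by a more hands-on route. The paper packages your entire inductive construction into a single lifting problem in $\C(S^1)$: it introduces the right adjoint $\Omega^{\infty}$ of evaluation at level $0$, observes that for a fibrant stable frame $Y$ the counit $Y \rightarrow \Omega^{\infty}Y_0$ is an acyclic fibration of cospectra, and lifts $\tilde{f}: X \rightarrow \Omega^{\infty}Y_0$ through it. Unwinding the fibration condition of Theorem \ref{cospectra} for the map $Y \rightarrow \Omega^{\infty}Y_0$ recovers exactly your key observation that the adjoint structure maps $Y_n \rightarrow \Omega Y_{n-1}$ are acyclic fibrations, and solving that lifting problem levelwise is precisely your induction --- so for existence the two arguments are really the same, yours just done by hand. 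The genuine divergence is in the uniqueness step: the paper argues abstractly that two lifts become equal after composing with the weak equivalence $Y \rightarrow \Omega^{\infty}Y_0$, hence are equal in $\Ho(\C(S^1))$ and therefore homotopic (using that $X$ is cofibrant and $Y$ fibrant), whereas you construct an explicit simplicial homotopy through the cylinder $X \wedge \Delta[1]_+$ by the same inductive lifting scheme. Your version costs a little more bookkeeping (one must carry along, as part of the induction, that $H_{n-1}$ restricts to $F_{n-1}$ and $F'_{n-1}$ on the two ends --- which you do implicitly via the top arrow $F_n \vee F'_n$) but buys a concrete cylinder-object homotopy rather than an abstract one, which meshes well with the later corollary that homotopic maps of frames induce equal derived natural transformations. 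The argument for the weak-equivalence statement is essentially identical to the paper's. Two cosmetic points: the initial object in the pointed category $\C$ is $*$ rather than $\emptyset$, and the pointwise cofibrancy of $X_n$ is part of the definition of a simplicial stable frame rather than a consequence of Proposition \ref{stableframing}.
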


\begin{proof}
The forgetful functor from cospectra to $\C$ sending a cospectrum $Z$ to $Z_0$ has a right adjoint $\Omega^{\infty}$, given on an object $A$ of $\C$ by $\Omega^{\infty}(A)_n = \Omega^n A$ and structure maps $\Omega^n A \wedge S^1 \rightarrow \Omega^{n-1} A$ adjoint to the identity of $ \Omega^n A$.  The counit $Z \rightarrow \Omega^{\infty} Z_0$ is easily seen to be a fibration of cospectra if the cospectrum $Z$ is fibrant, and a weak equivalence if $Z$ is additionally a stable frame.  Now we can choose a lift in the diagram

\[
\xymatrix{
		& & Y \ar[dd] \\
		\\
		X \ar[rr]^{\tilde{f}} & & \Omega^{\infty} Y_0
}
\]

This is the desired $F$. It is also clear that there is only one lift up to homotopy since two different lifts become equal after composition with the weak equivalence $Y \rightarrow \Omega^{\infty} Y_0$, hence are equal in the homotopy category after composition with an isomorphism. If $f$ is a weak equivalence, it is easy to check that the suspension of $F_1$ is a weak equivalence; since $F_1$ has cofibrant source and fibrant target and the model category is stable, this means $F_1$ is a weak equivalence. Arguing inductively, it follows that $F$ is a weak equivalence.
\end{proof}

This seems to be very satisfying; however, one needs to be careful here: Although all maps covering $f$ as above are homotopic, it is not clear that \emph{homotopic maps induce the same derived natural transformation}. To prove this, the model structure on cospectra introduced in \ref{cospectra} will be used.

\begin{prop}
\label{compatiblesimp}
Let $\C$ be a model category. Assume $f: X \rightarrow Y$ is a cofibration in $\C(S^1)$  and $g: A \rightarrow B$ is a cofibration of spectra. Then the induced pushout-product map in $\C$ $f  \Box g: X \wedge B \coprod_{X \wedge A} Y \wedge A \rightarrow Y \wedge B$ is a cofibration which is trivial if either $f$ or $g$ is.
\end{prop}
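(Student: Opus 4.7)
The plan is to reduce the statement to the simplicial pushout-product axiom (SM7) in $\C$ itself, by fixing the cofibration $f$ of cospectra and varying $g$ over a generating set of (acyclic) cofibrations of $\Sp$, exploiting that each cospectrum determines a left adjoint on $\Sp$.

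For any cospectrum $Z$ in $\C$, Theorem \ref{mainthm} together with Proposition \ref{adjunction} identifies $Z \wedge -: \Sp \to \C$ as a left adjoint, hence cocontinuous. A standard bookkeeping argument, applied to both $X \wedge -$ and $Y \wedge -$, then shows that, with $f: X \to Y$ fixed, the class $\mathcal{G}(f)$ of maps $g$ of spectra for which $f \Box g$ is a cofibration (resp. an acyclic cofibration) in $\C$ is closed under cobase change, transfinite composition and retracts. Since the cofibrations of $\Sp$ coincide in the level and stable model structures, a set of generating cofibrations is $\{F_n(i)\}$ with $i$ running over generating cofibrations of $\SSet_*$, and a set of generating acyclic cofibrations of the level structure is $\{F_n(j)\}$ with $j$ running over generating acyclic cofibrations of $\SSet_*$. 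It therefore suffices to verify the conclusion for $g$ of these forms.

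On such generators, the adjunction $(F_n, Ev_n)$ of Lemma \ref{freespectra}, combined with the coequalizer description of Proposition \ref{coeq}, yields a canonical natural isomorphism $Z \wedge F_n(K) \cong Z_n \wedge K$, where the right-hand $\wedge$ is the simplicial tensor in $\C$. Under this identification, the map $f \Box F_n(i)$ in $\C$ becomes the ordinary simplicial pushout-product $f_n \Box i$ of the cofibration $f_n: X_n \to Y_n$ with $i: K \to L$. Simplicial SM7 in $\C$ then immediately implies that $f_n \Box i$ is a cofibration, which is acyclic whenever either factor is. Combined with the levelwise characterization of (acyclic) cofibrations in $\C(S^1)$ from Theorem \ref{cospectra}, this settles the cofibration statement together with the acyclicity statement in the case where $f$ is acyclic and in the case where $g$ is a level acyclic generator $F_n(j)$.

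The main technical point, such as it is, is the propagation argument in the first paragraph: one must verify carefully that $\mathcal{G}(f)$ is saturated under the colimit operations listed. This reduces to the cocontinuity of the two left adjoints $X \wedge -$ and $Y \wedge -$ together with the closure properties of (acyclic) cofibrations in $\C$ itself, both standard. A caveat not covered by this reduction is that the acyclic statement above applies to the level model structure on $\Sp$; extending acyclicity to $g$ a \emph{stable} acyclic cofibration would require additional input, for example the hypothesis that $X$ and $Y$ are actual stable frames so that $X \wedge -$ and $Y \wedge -$ are left Quillen for the stable model structure on $\Sp$ in the sense of Proposition \ref{stableframing}.
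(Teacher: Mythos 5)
Your argument is essentially the paper's: the paper likewise reduces to the generating cofibrations $F_m \partial\Dn_+ \rightarrow F_m \Dn_+$ (citing \cite[4.2.4]{Ho} for exactly the saturation argument you spell out), identifies $Z \wedge F_m K$ with $Z_m \wedge K$, and invokes SM7 in $\C$. Your closing caveat is legitimate and in fact sharper than the paper's own treatment: for $g$ a \emph{stable} acyclic cofibration the conclusion genuinely requires $X$ and $Y$ to be stable frames (otherwise $Y \wedge -$ need not take the stable equivalences $F_n S^1 \rightarrow F_{n-1}S^0$ to weak equivalences), the paper's one-line proof is silent on this case, the only application (Corollary \ref{QQS}) sets $A = *$ and uses only the cofibration and ``$f$ acyclic'' directions, and the non-simplicial analogue, Proposition \ref{compatible}, quietly drops the ``or $g$'' clause from its statement.
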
 

\begin{proof}
We may assume that $g$ is one of the generating cofibrations $F_m \partial \Dn \rightarrow F_m \Dn$ by \cite{Ho}[4.2.4]. In this case, we have $X \wedge F_m \Dn = X_m \wedge \Dn$ and $X \wedge F_m \partial \Dn = X_m \wedge \partial \Dn$, and similarly for $Y$.  Now, the conclusion follows from the SM7-axiom for $\C$. 
\end{proof}

 \begin{cor}
 \label{QQS}
 Let $B$ be a cofibrant spectrum. Then the functor $- \wedge B: \C(S^1) \rightarrow \C$ preserves cofibrations and acyclic cofibrations and hence has a left derived functor. 
 \end{cor}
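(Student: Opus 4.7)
The plan is to reduce Corollary \ref{QQS} directly to the pushout-product statement of Proposition \ref{compatiblesimp} by specializing one of the two inputs.

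First I would observe that since $B$ is cofibrant as a spectrum, the map $\ast \to B$ from the initial (and terminal) spectrum is a cofibration of spectra. Given any cofibration $f \colon X \to Y$ in $\C(S^1)$, apply Proposition \ref{compatiblesimp} with $g = (\ast \to B)$. The pushout-product map is
\[
f \Box g \colon (X \wedge B) \amalg_{X \wedge \ast} (Y \wedge \ast) \longrightarrow Y \wedge B .
\]
The functor $X \wedge -\colon \Sp \to \C$ is by construction the left adjoint associated to the cospectrum $X$, hence preserves the initial object; since $\C$ is pointed, $X \wedge \ast = \ast$, and likewise $Y \wedge \ast = \ast$. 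The pushout therefore degenerates to $X \wedge B$, and $f \Box g$ is exactly the map $X \wedge B \to Y \wedge B$, i.e.\ the value of $-\wedge B$ on $f$. By Proposition \ref{compatiblesimp} this is a cofibration in $\C$, and is acyclic whenever $f$ is acyclic (using the ``trivial if $f$ is'' half of the pushout-product conclusion; triviality of $g$ is not needed).

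Having shown that $-\wedge B$ preserves cofibrations and acyclic cofibrations between cofibrant objects of $\C(S^1)$, the existence of a left derived functor follows from Ken Brown's lemma: a functor out of a model category which sends acyclic cofibrations between cofibrant objects to weak equivalences automatically sends all weak equivalences between cofibrant objects to weak equivalences, and so one obtains a left derived functor by precomposing with cofibrant replacement in the level model structure on $\C(S^1)$ from Theorem \ref{cospectra}.

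There is no real obstacle; the only point that requires a moment's thought is the identification $X \wedge \ast = \ast$, which is immediate from the fact that $X \wedge -$ is a left adjoint between pointed categories, and the implicit check that $\ast \to B$ is genuinely a cofibration of spectra, which is just the definition of $B$ being cofibrant.
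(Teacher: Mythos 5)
Your proof is correct and follows essentially the same route as the paper: the paper's own argument likewise sets $A = \ast$ in Proposition \ref{compatiblesimp} so that the pushout-product collapses to $f \wedge B \colon X \wedge B \to Y \wedge B$, and then invokes the derived-functor criterion. Your extra remarks on $X \wedge \ast = \ast$ and Ken Brown's lemma just make explicit what the paper leaves implicit.
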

 
 \begin{proof}
We set $A = *$ in \ref{compatiblesimp}. The pushout-product map is then just the map $f \wedge B: X \wedge B \rightarrow Y \wedge B$ which is an (acyclic) cofibration if $f$ is. Hence $- \wedge B$ preserves weak equivalences between cofibrant objects and has a left derived functor as claimed.
\end{proof}

\begin{cor}
Let $X$, $Y$ be simplicial stable frames and $F, G: X \rightarrow Y$ two homotopic maps. Then $F$ and $G$ induce the same derived natural transformations between the derived functors of $X \wedge -$ and $Y \wedge -$. 
\end{cor}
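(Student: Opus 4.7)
The strategy is to leverage Corollary \ref{QQS}: since $- \wedge B$ takes cofibrations and acyclic cofibrations in $\C(S^1)$ to cofibrations and acyclic cofibrations in $\C$ whenever $B$ is a cofibrant spectrum, it should send any cylinder object for $X$ to a cylinder object for $X \wedge B$, and thus transport a left homotopy $F \simeq G$ to a left homotopy $F \wedge B \simeq G \wedge B$. Once this is in hand, homotopic maps agree in $\Ho(\C)$, which is what we need, and naturality in $B$ will be automatic.

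In more detail, note first that a stable frame is levelwise cofibrant and hence cofibrant in the level model structure on $\C(S^1)$ from Theorem \ref{cospectra}. We may therefore choose a cylinder object $X \coprod X \to \widetilde{X} \xrightarrow{\sim} X$ for $X$ together with a left homotopy $H\colon \widetilde{X} \to Y$ between $F$ and $G$. Fix a cofibrant spectrum $B$; by our conventions the derived natural transformations induced by $F$ and $G$, evaluated at $B$, are represented by the morphisms $F \wedge B, G \wedge B\colon X \wedge B \to Y \wedge B$ in $\C$. Applying $- \wedge B$ to the cylinder diagram, Corollary \ref{QQS} ensures that $\widetilde{X} \wedge B \to X \wedge B$ is a weak equivalence and that $(X \coprod X) \wedge B \to \widetilde{X} \wedge B$ is a cofibration, both between cofibrant objects in $\C$.

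To conclude that the result is genuinely a cylinder object for $X \wedge B$, the key verification is that $- \wedge B$ preserves the binary coproduct $X \coprod X$. This follows from the explicit coequalizer description of $X \wedge B$ (analogous to Proposition \ref{coeq}, applied in $\C$) together with the fact that smashing with a fixed simplicial set in the pointed simplicial category $\C$ preserves coproducts, since coproducts commute with coequalizers and with one another. Hence $(X \coprod X) \wedge B \cong (X \wedge B) \coprod (X \wedge B)$, so $\widetilde{X} \wedge B$ is indeed a cylinder object for $X \wedge B$, and $H \wedge B$ is a left homotopy between $F \wedge B$ and $G \wedge B$. Consequently these morphisms agree in $\Ho(\C)$. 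Since $B$ was arbitrary and the whole construction is natural in $B$, the derived natural transformations induced by $F$ and $G$ coincide.

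The only step requiring more than formal manipulation is checking that $- \wedge B$ carries cylinders to cylinders; Corollary \ref{QQS} already handles preservation of the relevant cofibrations and weak equivalences, so the remaining content is the coproduct-preservation, which is a routine unwinding of the coequalizer definition of $- \wedge B$.
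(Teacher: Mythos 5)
Your proof is correct and takes essentially the same route as the paper's: both arguments reduce to Corollary \ref{QQS}, the paper by invoking the resulting derived functor $- \wedge^L B\colon \Ho(\C(S^1)) \rightarrow \Ho(\C)$ abstractly (homotopic maps are already identified in $\Ho(\C(S^1))$, hence have equal images), and you by unwinding the same fact via cylinder objects. The coproduct-preservation you single out is indeed the only non-formal point, and it holds as you say because $X \wedge B$ is a colimit computed levelwise in the cospectrum variable.
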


\begin{proof}
Let $A$ be a cofibrant spectrum. We have two maps $F(A), G(A): X \wedge A \rightarrow Y \wedge A$. We claim that these two maps represent the same map in $\Ho(\C)$: Since $- \wedge A$ has a derived functor by the preceding corollary, we get a diagram of functors
\[
\xymatrix{
		\C(S^1) \ar[d] \ar[rr]^{- \wedge A} & & \C \ar[d] \\
		\Ho(\C(S^1) \ar[rr]^{- \wedge^L A} & & Ho(\C)
}
\]
which commutes up to a natural isomorphism.  We want to see that $F$ and $G$ go to the same map via the clockwise composition; but since the left vertical map sending them to their homotopy classes already  sends them to the same map, the clockwise composition, being isomorphic to the counterclockwise one, then has to send these two maps to the same map as well.  So for all cofibrant spectra $A$, $F(A), G(A): X \wedge A \rightarrow Y \wedge A$ represent the same map in $\Ho(\C)$, and this is enough to conclude that the derived natural transformations of $F$ and $G$ are equal.
\end{proof}

The next statement tells us that natural weak equivalences of Quillen functors correspond precisely to weak equivalences of simplicial stable frames:

\begin{prop}
Let $f: X \rightarrow Y$ be a weak equivalence of stable frames. Then the derived natural transformation $f \wedge -: X \wedge - \rightarrow Y \wedge -$ is a natural weak equivalence, i.e., a weak equivalence for all cofibrant spectra $A$.  Conversely, if $f \wedge -$ is a natural weak equivalence, then $f$ is a weak equivalence.
\end{prop}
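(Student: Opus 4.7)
The plan is to reduce both directions to already-established machinery: Corollary~\ref{QQS} together with Ken Brown's lemma for the forward direction, and evaluation at free spectra for the converse.

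For the forward direction, first observe that by the description of cofibrations in Theorem~\ref{cospectra}, a cospectrum is cofibrant in $\C(S^1)$ precisely when each of its levels is cofibrant in $\C$. In particular, every simplicial stable frame is cofibrant in $\C(S^1)$, so $f:X\to Y$ is a weak equivalence between cofibrant objects. For any cofibrant spectrum $A$, Corollary~\ref{QQS} tells us that $-\wedge A\colon \C(S^1)\to\C$ preserves cofibrations and acyclic cofibrations. Ken Brown's lemma then upgrades this: the functor $-\wedge A$ takes weak equivalences between cofibrant objects to weak equivalences. Applied to our $f$, this gives that $f\wedge A\colon X\wedge A\to Y\wedge A$ is a weak equivalence in $\C$, which is exactly the statement that the derived natural transformation $f\wedge -$ is a natural weak equivalence.

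For the converse, evaluate on the free spectra $F_n S^0$, which are cofibrant in $\Sp$ (as all free spectra on pointed cofibrant simplicial sets are). Under the correspondence between $S^1$-cospectra and strong simplicial left adjoints $\Sp\to\C$ explained just before Proposition~\ref{stableframing}, we have a natural identification $X\wedge F_n S^0\cong X_n$ (and similarly for $Y$), and under this identification the natural transformation $f\wedge F_n S^0$ becomes the level map $f_n\colon X_n\to Y_n$. Hence the hypothesis that $f\wedge A$ is a weak equivalence for every cofibrant $A$ implies in particular that every $f_n$ is a weak equivalence; by the definition of weak equivalences in $\C(S^1)$ from Theorem~\ref{cospectra}, this says exactly that $f$ is a weak equivalence of cospectra.

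I do not expect any serious obstacle; the only points requiring a moment of care are the identification $X\wedge F_n S^0\cong X_n$ (which is part of the dictionary already set up) and the observation that stable frames are cofibrant cospectra, so that Ken Brown's lemma is actually applicable. Both are immediate from the results collected above.
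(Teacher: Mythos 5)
Your proof is correct. The converse coincides with the paper's own argument: evaluate at the free spectra $F_n S^0 \cong F_n \Delta[0]_+$, use the identification $X \wedge F_n S^0 \cong X_n$, and read off that each $f_n$ is a weak equivalence. The forward direction, however, takes a genuinely different route. The paper invokes the duality for derived natural transformations of Quillen pairs (\cite[1.3.18]{Ho}) and verifies the adjoint statement: for fibrant $Z$, the map $\Map(f,Z)\colon \Map(Y,Z) \rightarrow \Map(X,Z)$ is a level weak equivalence because $\Map(-,Z)$ preserves weak equivalences between cofibrant objects, and level equivalences agree with $\pi_*$-isomorphisms here only because both sides are $\Omega$-spectra. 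You instead stay entirely on the left-adjoint side: since cofibrations in $\C(S^1)$ are levelwise (Theorem \ref{cospectra}), every simplicial stable frame is cofibrant as a cospectrum, so Corollary \ref{QQS} together with Ken Brown's lemma shows that $-\wedge A$ sends the weak equivalence $f$ between cofibrant cospectra to a weak equivalence $f \wedge A$ in $\C$. Your version is shorter and avoids the $\Omega$-spectrum bookkeeping altogether (and indeed the Ken Brown conclusion is already recorded in the proof of Corollary \ref{QQS}); the paper's version has the mild advantage of not depending on the model structure on $\C(S^1)$ or on Proposition \ref{compatiblesimp} at this point, needing only the simplicial structure on $\C$. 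Both arguments are complete.
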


\begin{proof}
By \cite[1.3.18]{Ho}, we may as well check the corresponding statement for the right adjoints, i.e., that for a fibrant object $Z$ of $\C$, the map 
\[
\Map(f, Z): \Map(Y, Z) \rightarrow \Map(X,Z)
\]
is a $\pi_*$-isomorphism. Since the simplicial mapping space $\Map(-, Z)$ is left Quillen considered as a functor $\C \rightarrow \SSet^{op}$, it preserves weak equivalence between cofibrant objects; in particular, the weak equivalence $f_n$ induces a weak equivalence $\Map(Y_n, Z) \rightarrow \Map(X_n, Z)$ and hence a level weak equivalence $\Map(f, Z): \Map(Y, Z) \rightarrow \Map(X,Z)$ as claimed.  Note that since the involved spectra are all fibrant, level equivalences and $\pi_*$-isomorphisms are equal. For the converse, note that $f_n = f \wedge F_n \Delta[0]$ is a weak equivalence if $f \wedge -$ i a natural weak equivalence.
\end{proof}

This allows us to describe the category of derived simplicial Quillen functors from $\Sp$ to $\C$:

\begin{definition}
For a stable model category $\C$, let $SSF(\C)$ denote the full subcategory of $\C(S^1)$ given by all stable frames and $\Ho(SSF(\C))$ the full subcategory of the homotopy category of $\C(S^1)$ given by stable frames.
\end{definition}

\begin{prop}
The category $\Ho(SSF(\C))$ is equivalent to the category of derived simplicial Quillen functors $\Sp \rightarrow \C$.
\end{prop}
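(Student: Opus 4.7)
The plan is to define a functor $\Phi: \Ho(SSF(\C)) \to \mathrm{DerSimpQuillen}(\Sp,\C)$ by sending a stable frame $X$ to the derived simplicial Quillen functor $X \wedge^L -$, and verify that it is an equivalence by assembling the pieces developed so far.

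First, at the unlocalized level, the assignment $X \mapsto (X \wedge -,\Map(X,-))$ defines a functor from $SSF(\C)$ to simplicial left Quillen functors $\Sp \to \C$. This functor is essentially surjective by Proposition~\ref{stableframing}, and it is fully faithful by the characterization of natural transformations between simplicial left adjoints out of $\Sp$ given earlier in Section~3: such natural transformations $X \wedge - \to Y \wedge -$ are determined by their components on the $F_n(S^0)$ and correspond bijectively to cospectrum maps $X \to Y$. Under this equivalence, weak equivalences of stable frames correspond precisely to natural weak equivalences of Quillen functors by the preceding proposition, and homotopic cospectrum maps induce identical derived natural transformations by the preceding corollary. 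Hence $\Phi$ is well-defined on $\Ho(SSF(\C))$ and remains essentially surjective after localization.

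For the fully faithful part, I would use the model structure on cospectra of Theorem~\ref{cospectra}: every stable frame $Y$ admits a fibrant stable-frame replacement $Y^{fib}$ (factor $Y \to \ast$ in $\C(S^1)$ and note that trivial cofibrations of cospectra preserve the stable frame property; alternatively invoke the construction in Proposition~\ref{existencestable}). Morphisms $X \to Y$ in $\Ho(SSF(\C))$ are then represented by homotopy classes of cospectrum maps $X \to Y^{fib}$. For faithfulness: if two cospectrum maps $F, G: X \to Y^{fib}$ induce equal derived natural transformations, then evaluating at $\U$ gives $F_0 = G_0$ in $\Ho(\C)$, so the uniqueness clause of Proposition~\ref{fullfaithful} forces $F \simeq G$ as cospectrum maps. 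For fullness: a derived natural transformation $\phi: X \wedge^L - \to Y \wedge^L -$ restricted at $\U$ is a map $X_0 \to Y_0^{fib}$ in $\Ho(\C)$; the existence clause of Proposition~\ref{fullfaithful} lifts it to a cospectrum map $\tilde\phi : X \to Y^{fib}$, whose associated natural transformation agrees with $\phi$ on each $F_n(S^0)$ and hence, by the characterization of natural transformations between left adjoints, everywhere.

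The main obstacle is fullness, since a derived natural transformation is a priori only a zigzag of honest natural transformations interleaved with inverses of natural weak equivalences, and no single cospectrum map need represent it on the nose. What makes it work is fibrant replacement inside $\C(S^1)$: by the preceding proposition each natural weak equivalence lifts to a weak equivalence of stable frames, and the cospectrum model structure of Theorem~\ref{cospectra} lets us genuinely invert such weak equivalences in $\Ho(\C(S^1))$. Combined with Proposition~\ref{fullfaithful}, this collapses any such zigzag to a single homotopy class of cospectrum maps into $Y^{fib}$, completing the verification that $\Phi$ is an equivalence.
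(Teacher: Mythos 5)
Your first paragraph matches the paper's actual argument: the paper's entire proof is that $SSF(\C)$ is equivalent to the unlocalized category of simplicial Quillen functors, that by the preceding proposition natural weak equivalences correspond \emph{exactly} (both directions) to weak equivalences of cospectra, and hence that localizing the functor category at natural weak equivalences is the same as localizing $SSF(\C)$ at its weak equivalences, which is $\Ho(SSF(\C))$. Your faithfulness argument via evaluation at $\U$ and the uniqueness clause of Proposition~\ref{fullfaithful} is also sound (modulo the mild extension that two cospectrum maps whose degree-$0$ parts are merely \emph{homotopic} are homotopic, which the proof of \ref{fullfaithful} in fact gives).

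The gap is in your fullness argument. You arrange agreement of $\tilde\phi\wedge-$ with $\phi$ only at $\U=F_0(S^0)$, and then assert agreement "on each $F_n(S^0)$ and hence everywhere." Neither step is justified: agreement at $F_n(S^0)$ for $n\geq 1$ would require a desuspension argument that you do not give, and the "characterization of natural transformations between left adjoints" you invoke applies to honest natural transformations, not to morphisms of the localized functor category, which are zigzags. Worse, the principle you are implicitly using --- that a derived natural transformation between derived simplicial Quillen functors out of $\Sp$ is determined by its component at $\U$ --- is precisely the faithfulness part of the section's concluding theorem ($ev_0:\Ho(SSF(\C))\to\Ho(\C)$ is an equivalence), which is proved \emph{from} the present proposition; so your argument is circular. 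Fullness should instead be proved directly: a morphism in the localized functor category is represented by a zigzag of natural transformations whose backward arrows are natural weak equivalences; under the unlocalized equivalence this is a zigzag in $SSF(\C)$ whose backward arrows are weak equivalences of cospectra, and these are already invertible in $\Ho(\C(S^1))$, so the zigzag composes to a single morphism of $\Ho(SSF(\C))$ mapping to the given class. (What then remains, and what both you and the paper leave implicit, is that distinct classes in $SSF(\C)$ localized at its weak equivalences stay distinct in $\Ho(\C(S^1))$; this follows from the standard identification of morphisms in the localization with homotopy classes into a fibrant replacement, using that $SSF(\C)$ is closed under fibrant replacement and cylinder objects in $\C(S^1)$ --- your observation that acyclic cofibrations preserve the stable frame property is exactly the right ingredient here.)
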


\begin{proof}
The category $SSF(\C)$ is equivalent to the category of simplicial Quillen functors $\Sp \rightarrow \C$. To obtain the category of derived functors, we localise at the natural weak equivalences; under the equivalence of categories, the preceding proposition tells us that this corresponds to inverting the weak equivalences of cospectra, which yields the homotopy category $\Ho(SSF(\C))$. In particular, we have an actual \emph{category} of derived Quillen functors.
\end{proof}

So we can interpret $\Ho(SSF(\C))$ as  a category of functors $\SHC \rightarrow \Ho(\C)$, and we will usually do so. The central theorem of this section tells us that this category is nothing new:

\begin{thm}
Evaluation in degree 0 induces an equivalence of categories 
\[
ev_0: \Ho(SSF(\C)) \rightarrow \Ho(\C)
\]
\end{thm}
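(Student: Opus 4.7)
The plan is to verify essential surjectivity and full faithfulness separately, with the technical work delegated to Propositions \ref{existencestable} and \ref{fullfaithful}. Essential surjectivity is immediate: given a cofibrant object $Z$ of $\C$, fibrantly replace it to obtain a cofibrant-fibrant object $\tilde Z$ weakly equivalent to $Z$, and then apply \ref{existencestable} to produce a fibrant simplicial stable frame $A$ with $A_0 = \tilde Z$, so that $ev_0(A) \cong Z$ in $\Ho(\C)$.

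For full faithfulness, I would first reduce to the case that $Y$ is a fibrant stable frame. This is done by fibrantly replacing $Y_0$ by a cofibrant-fibrant object $\tilde Y_0$ in $\C$, applying \ref{existencestable} to produce a fibrant stable frame $\tilde Y$ over $\tilde Y_0$, and then applying \ref{fullfaithful} to lift the weak equivalence $Y_0 \to \tilde Y_0$ to a map $Y \to \tilde Y$ of cospectra; the last sentence of \ref{fullfaithful} guarantees this lift is itself a weak equivalence, so it induces an isomorphism $Y \cong \tilde Y$ in $\Ho(\C(S^1))$ compatible with evaluation at degree zero. Once $Y$ is a fibrant stable frame, morphisms in $\Ho(\C(S^1))$ from the cofibrant object $X$ to the fibrant object $Y$ are exactly $\C(S^1)$-homotopy classes of maps, and similarly $\Hom_{\Ho(\C)}(X_0, Y_0)$ is computed by $\C$-homotopy classes.

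Surjectivity on Hom-sets is then the existence half of \ref{fullfaithful}: any $f : X_0 \to Y_0$ extends to some $F : X \to Y$ with $ev_0(F) = f$. For injectivity, suppose $F, G : X \to Y$ satisfy $F_0 \simeq G_0$ in $\C$, say via a left homotopy $H_0 : X_0 \wedge \Delta^1_+ \to Y_0$. The cospectrum $X \wedge \Delta^1_+$ obtained by levelwise smashing is again a stable frame on $X_0 \wedge \Delta^1_+$ (each $X_n \wedge \Delta^1_+$ is cofibrant, and the structure maps remain weak equivalences by the pushout-product reasoning of \ref{compatiblesimp}). Applying \ref{fullfaithful} to $H_0$, regarded as a map of cofibrant-fibrant objects up to the cofibrant-fibrant replacement already in place, gives a lift $H : X \wedge \Delta^1_+ \to Y$. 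Its restrictions along $d_0, d_1$ are two maps $X \to Y$ extending $F_0$ and $G_0$ respectively, so by the uniqueness half of \ref{fullfaithful} they are $\C(S^1)$-homotopic to $F$ and $G$; concatenating these with the $\C(S^1)$-homotopy provided by $H$ itself yields $[F] = [G]$ in $\Ho(\C(S^1))$.

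The main obstacle is this injectivity step, where a $\C$-level homotopy between $F_0$ and $G_0$ must be upgraded to a $\C(S^1)$-identification between $F$ and $G$. The key trick is to apply \ref{fullfaithful} twice: once to promote the simplicial homotopy $H_0$ to a genuine cospectrum-level homotopy on the stable frame $X \wedge \Delta^1_+$, and once to identify its endpoints with $F$ and $G$ up to homotopy via uniqueness. The simplicial enrichment is essential here, since it is what allows $X \wedge \Delta^1_+$ to serve as a cylinder object in $\C(S^1)$ whose degree-zero part is a cylinder object for $X_0$ in $\C$.
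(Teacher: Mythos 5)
Your proposal is correct and follows the same overall decomposition as the paper: essential surjectivity via Proposition \ref{existencestable} and full faithfulness via Proposition \ref{fullfaithful}. The one place where you genuinely go beyond the paper's argument is the injectivity step. The paper simply asserts that faithfulness ``is the content of \ref{fullfaithful},'' but as stated that proposition only says that two extensions of the \emph{same} map $f: X_0 \rightarrow Y_0$ are homotopic, whereas faithfulness of the derived functor requires handling $F, G$ whose degree-zero parts are merely homotopic in $\C$. You close this gap with the cylinder $X \wedge \Delta[1]_+$, checking (correctly, via SM7 and the pushout-product reasoning of \ref{compatiblesimp}) that it is again a stable frame and a cylinder object in $\C(S^1)$, then lifting the homotopy $H_0$ itself and invoking the uniqueness half of \ref{fullfaithful} to match its endpoints with $F$ and $G$. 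The paper's implicit route to the same point is slightly different: in the proof of \ref{fullfaithful}, two lifts are compared after composing with the weak equivalence $Y \rightarrow \Omega^{\infty} Y_0$, and if $F_0 \simeq G_0$ then the adjoint maps $X \rightarrow \Omega^{\infty} Y_0$ are homotopic, so the same argument identifies $F$ and $G$ in $\Ho(\C(S^1))$. Both arguments work; yours has the advantage of being self-contained at the level of the stated propositions rather than requiring one to reopen the proof of \ref{fullfaithful}, at the mild cost of verifying that smashing a stable frame with $\Delta[1]_+$ yields a stable frame.
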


\begin{proof}
Since evaluation in degree $0$ is left Quillen, we indeed obtain a derived functor on the homotopy  categories. That $ev_0$ is full is the statement of \ref{existencestable}: For each object $X$ of $\Ho(\C)$, there is an isomorphic object $Y$ - for example, a fibrant replacement of $X$ - on which we can build a stable frame. The fact that $ev_0$ is full and faithful is the content of \ref{fullfaithful}: We can extend any map in degree $0$ to a map of stable frames if we have chosen the frame appropriately, which is always possible, and such an extenion is unique up to homotopy.
\end{proof}

This theorem can be fed into the constructions in Chapter 7 instead of Theorem \ref{MAIN}, and one easily obtains simplicial analogues of the theorems there. Also note that allowing arbitrary Quillen functors $\Sp \rightarrow \C$ does not really change anything: The category of derived functors is still equivalent to $\Ho(\C)$; hence, in the simplicial case, there is no real loss of generality by only considering strong simplicial functors.

\section{Cosimplicial frames}

The proof of the general case of our main theorem relies on the technique of frames in a model category first developed in \cite{DK}; we will give a short overview, mostly based on \cite[Section 5]{Ho}.  \\
There is one easy definition of a cosimplicial frame: A cosimplicial object in a (pointed) model category $\C$ is a frame if and only if the associated adjunction $\SSet_* \rightleftharpoons \C$ is a Quillen pair. This is the correct definition; however, we will give an equivalent description of frames which is easier to handle since it makes no reference to the associated adjunction; only intrinsic properties of the cosimplicial object will be used. 
\subsection{Additional structure on $\C^{\Delta}$}
Let $\C$ be a pointed, cocomplete category. The category of cosimplicial objects in $\C$ is a simplicial category in a natural way: 

\begin{definition}
For a cosimplicial object $X$ in $\C$ and a pointed simplicial set $K$, define a cosimplicial object $X \wedge_S K$ by
\[
(X \wedge_S K)_n = X \wedge(K \wedge \Dn_+)
\]
with cosimplicial structure maps induced by the cosimplicial structure map of the cosimplicial object $K \wedge \Delta [-]_+$ under the functor $X \wedge -$. 
\end{definition}

Using the equivalent language of adjunctions $\SSet_* \rightleftharpoons \C$, this construction takes the following form: A cosimplicial object $X$ represents an adjunction $\SSet_* \rightleftharpoons \C$, and we can precompose this adjunction with the adjunction $(K \wedge -, (-)^K): \SSet_*\rightleftharpoons \SSet_*$ to obtain another adjunction $\SSet_* \rightleftharpoons \C$, and this adjunction is represented by the cosimplicial object $X \wedge_S K$. 

\begin{prop}
This smash product is part of a simplicial structure on $\C^{\Delta}$.
\end{prop}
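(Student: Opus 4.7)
The plan is to exploit the equivalence from the preceding proposition, under which a cosimplicial object $X \in \C^{\Delta}$ corresponds to the adjunction $(X \wedge -,\, \Map(X,-)): \SSet_* \rightleftharpoons \C$. Inspection of the definition of $X \wedge_S K$ shows that it corresponds precisely to the composite adjunction obtained by pre-composing $(X \wedge -,\, \Map(X,-))$ with $(K \wedge -,\, (-)^K): \SSet_* \rightleftharpoons \SSet_*$; equivalently, the left adjoint associated to $X \wedge_S K$ sends a pointed simplicial set $L$ to $X \wedge (K \wedge L)$. This identification is the engine of the whole argument.

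From it, the associativity and unit isomorphisms
\[
(X \wedge_S K) \wedge_S L \;\cong\; X \wedge_S (K \wedge L), \qquad X \wedge_S S^0 \;\cong\; X,
\]
natural in all variables, are immediate, since both reduce under the equivalence to associativity of composition of adjoint pairs and to the fact that $S^0 \wedge -$ is the identity functor on $\SSet_*$. I would then define the simplicial mapping space by
\[
\Map_{\C^{\Delta}}(X, Y)_n := \Hom_{\C^{\Delta}}(X \wedge_S \Dn_+, Y),
\]
with simplicial operators induced from the cosimplicial structure on $\Delta[-]_+$. Composition $\Map_{\C^{\Delta}}(X,Y)_n \times \Map_{\C^{\Delta}}(Y,Z)_n \to \Map_{\C^{\Delta}}(X,Z)_n$ is defined using the natural pointed diagonal $\Dn_+ \to \Dn_+ \wedge \Dn_+$ together with the tensor functoriality $f \wedge_S \Dn_+$ established above.

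The tensor-hom adjunction
\[
\Hom_{\C^{\Delta}}(X \wedge_S K, Y) \;\cong\; \Hom_{\SSet_*}\!\bigl(K, \Map_{\C^{\Delta}}(X, Y)\bigr)
\]
is then obtained by a Yoneda argument: for $K = \Dn_+$ both sides agree tautologically, and the general case follows by writing $K$ as a colimit of its simplices and using that $X \wedge_S -$ preserves colimits, which in turn follows from the fact that $X \wedge -$ is a left adjoint. The cotensor $X^K$, needed to complete the structure in the sense of Hovey, is constructed by the dual end formula and exists as soon as $\C$ admits the requisite limits, which we may assume in the cases of interest.

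The main bookkeeping obstacle is verifying the full package of coherence diagrams — the associativity pentagon, the unit triangle, and the compatibility of enriched composition with tensoring. However, each such diagram corresponds, under the equivalence with adjunctions, to a coherence diagram for composition of functors, which holds strictly on the nose; thus coherence is essentially formal and reduces to chasing natural isomorphisms rather than a genuinely new calculation.
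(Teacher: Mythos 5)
Your proposal is correct and follows essentially the same route as the paper: the mapping spaces are defined by $\Map(X,Y)_n = \Hom_{\C^{\Delta}}(X \wedge_S \Dn_+, Y)$, and everything else (tensor--hom adjunction, associativity, cotensors) is extracted from the identification of $X \wedge_S K$ with the composite adjunction, exactly as the paper does (it simply defers the cotensor construction to Quillen). You spell out more of the routine verifications, but there is no difference in method.
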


\begin{proof}
The simplicial mapping spaces are defined as
\[
\Map(X,Y)_n = \Hom_{\C^{\Delta}} (X \wedge_S \Dn_+, Y)
\]
with simplicial structure maps induced from the cosimplicial structure maps of the cosimplicial object of cosimplicial objects $X \wedge_S \Delta [-]_+$. See \cite[II.1]{Qui} for a description of the right adjoints $(-)^K: \C^{\Delta} \rightarrow \C^{\Delta}$.
\end{proof}

If $\C$ is a model category, the category $\C^{\Delta}$ also carries a model structure. Note the different meanings of $X \wedge_S K$ and $X \wedge K$: The first is a cosimplicial object, the latter an object of $\C$. Also recall that $X \wedge \Dn_+  \cong X_n$.

\begin{definition}
Let $f:X \rightarrow Y$ be a morphism of cosimplicial objects in a model category $\C$. The map $f$ is
\begin{itemize}
\item a weak equivalence if for all $n$, the map $f_n: X_n \rightarrow Y_n$ is a weak equivalence
\item a (acyclic) Reedy cofibration if the induced maps 
\[
X \wedge \Dn_+ \coprod_{X \wedge \partial \Dn_+} Y \wedge \partial \Dn_+ \rightarrow Y_n
\]
 are (acyclic) cofibrations in $\C$ for all $n$
\item a (acyclic) Reedy fibration if it has the corresponding right lifting property with respect to (acyclic) Reedy cofibrations 
\end{itemize}
\end{definition}

\begin{rem}
It is easy to check with this definition that an object $Y$ is Reedy fibrant if and only if the induced map $Y \rightarrow c(Y_0)$  is a Reedy fibration and $Y_0$ is fibrant; we will often make use of this.
\end{rem} 

Of course, this defines a model structure such that the two possibly different notions of acyclic cofibrations (resp. the two possibly different notions of acyclic fibrations) agree:
\begin{thm}
\label{reedy}
With these classes of weak equivalences, cofibrations and fibrations, the category $\C^{\Delta}$ is a model category.
\end{thm}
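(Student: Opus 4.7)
\emph{Plan.} My strategy is to prove this as an instance of the Reedy model structure on diagrams indexed by a Reedy category, applied to $\Delta$ with direct/inverse subcategories generated by face/degeneracy operators. Using the pushout-product already appearing in the author's definition of Reedy cofibration, I name the latching object $L_n X := X \wedge \partial \Dn_+$, and dually introduce the matching object $M_n X$ as the limit indexed by non-identity degeneracies with source $[n]$. The essential preliminary is the dual characterization of Reedy fibrations: $f: X \to Y$ is an (acyclic) Reedy fibration if and only if for every $n$ the matching map $X_n \to Y_n \times_{M_n Y} M_n X$ is an (acyclic) fibration in $\C$.

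\emph{Routine axioms.} MC1--MC3 are then formal. Limits and colimits in $\C^{\Delta}$ are computed pointwise, so bicompleteness and the two-out-of-three axiom follow immediately from the corresponding properties in $\C$. All three classes are closed under retracts: cofibrations and weak equivalences pointwise, and fibrations by their defining right lifting property. The substantive work lies in MC4 and MC5, both of which I would prove by induction on cosimplicial degree. For MC5, given $f: X \to Y$ already factored as $X \to Z \to Y$ through degree $n-1$, the maps $L_n X \to L_n Z$ and $M_n Z \to M_n Y$ are determined by what has been constructed; I would then apply MC5 in $\C$ to factor the induced corner map $X_n \sqcup_{L_n X} L_n Z \to Y_n \times_{M_n Y} M_n Z$ as an (acyclic) cofibration followed by an acyclic (fibration), taking the middle object to be $Z_n$ and reading off its latching/matching data. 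A parallel induction handles MC4: a lifting problem is solved one degree at a time by solving in $\C$ the lifting problem between the relevant pushout-product and pullback-product corner maps, where one of them is an acyclic (co)fibration by the dual characterizations.

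\emph{Main obstacle.} The technical heart of the argument, and the step I expect to be the main obstacle, is showing that the two a priori different notions of Reedy acyclic (co)fibration coincide: namely, that a Reedy cofibration $f$ which is a pointwise weak equivalence has each pushout-product map $X_n \sqcup_{L_n X} L_n Y \to Y_n$ an \emph{acyclic} cofibration rather than merely a cofibration, and dually on the fibration side via matching maps. This reduces to the lemma that $L_n X \to L_n Y$ is a trivial cofibration whenever $f$ is a Reedy trivial cofibration. I would prove this via a skeletal filtration of the latching objects indexed by the dimension of the face operators involved; each successive stage is attached by pushout along a coproduct of corner maps built from boundary inclusions $\partial \Dk_+ \hookrightarrow \Dk_+$ and $f$, so the pushout stability of trivial cofibrations propagates triviality through the filtration. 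Once this lemma is in place, MC4 and MC5 assemble uniformly from the inductive factor/lift procedure above, completing the verification of the model category axioms.
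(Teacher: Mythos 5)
Your proposal is correct and takes essentially the same route as the paper, which offers no argument of its own here but simply defers to \cite[5.2.5]{Ho}: your degreewise induction via latching and matching objects for MC4 and MC5, and your identification of the real difficulty as the coincidence of the two notions of acyclic Reedy (co)fibration, reduced to the lemma that $L_n X \rightarrow L_n Y$ is a trivial cofibration via the skeletal filtration of $\partial \Delta[n]$, is exactly the content of that citation. The only point worth flagging is that the filtration lemma must be run simultaneously with the main induction (the triviality of $L_n X \rightarrow L_n Y$ uses the triviality of the relative latching maps in degrees $k<n$), which your sketch implicitly accommodates since $\partial\Delta[n]$ only involves cells of dimension below $n$.
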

\begin{proof}
This is \cite[5.2.5]{Ho}, or see \cite{Reedy}.
\end{proof}

Note that a cosimplicial object $X$ is cofibrant if and only if $X \wedge -$ preserves cofibrations - by definition, it preserves the generating cofibrations $\partial \Dn_+ \rightarrow \Dn_+$, and hence all cofibrations. \\
Now $C^{\Delta}$ is a model category and a simplicial category; however, the SM7-axiom for a simplicial model category fails. We only have the following:

\begin{prop}
\label{SM7}
Let $f: X \rightarrow Y$ be a Reedy cofibration of cosimplicial objects in a model category $\C$ and let $i: K \rightarrow L$ be a cofibration of simplicial sets. Then the pushout-product map 
\[
f \Box i: X \wedge_S L \coprod_{X \wedge_S K} Y \wedge_S K \rightarrow Y \wedge_S L
\]
is a cofibration which is trivial if $f$ is.
\end{prop}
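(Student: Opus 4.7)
My plan is to follow the standard Reedy-theoretic template for proving pushout-product lemmas. First, I would make a formal reduction to generating cofibrations in the simplicial-set variable. The class of maps $i : K \to L$ of pointed simplicial sets for which the conclusion holds — that $f \Box i$ is a Reedy cofibration in $\C^\Delta$, acyclic whenever $f$ is — is closed under pushouts, transfinite compositions, and retracts, since $-\wedge_S -$ preserves such constructions in the simplicial variable and Reedy (acyclic) cofibrations in $\C^\Delta$ are closed under the corresponding pushouts. Hence it suffices to verify the claim for the generating cofibrations $i_n : \partial\Delta[n]_+ \hookrightarrow \Delta[n]_+$.

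Second, for $i = i_n$ I would check the Reedy cofibration condition directly by analyzing the latching maps level by level. The crucial calculation is that $(X \wedge_S \Delta[n]_+)_m = X \wedge (\Delta[n]_+ \wedge \Delta[m]_+) = X \wedge (\Delta[n] \times \Delta[m])_+$, and since the associated left adjoint $X \wedge - : \SSet_* \to \C$ preserves colimits, one obtains a cellular description of $X \wedge_S \Delta[n]_+$ and of $X \wedge_S \partial\Delta[n]_+$ in terms of pieces of the form $X_k$ indexed by the nondegenerate simplices of $\Delta[n] \times \Delta[m]$. Matching these cellular descriptions with the Reedy latching colimits, the level-$m$ latching map of $f \Box i_n$ can be identified with a pushout-product inside $\C$ whose two factors are, on one hand, the Reedy latching map of $f$, and on the other, the pushout-product of simplicial boundary inclusions $(\partial\Delta[n]_+ \hookrightarrow \Delta[n]_+) \Box (\partial\Delta[m]_+ \hookrightarrow \Delta[m]_+)$ viewed inside $\SSet_*$ and then pushed into $\C$ via the appropriate $X_k$'s.

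Third, the conclusion then follows from two applications of the ordinary pushout-product property: the pushout-product of two cofibrations in $\SSet_*$ is a cofibration, and the pushout-product of a cofibration and a (trivial) cofibration in $\C$ is a (trivial) cofibration. Since the Reedy latching maps of $f$ are cofibrations, and trivial cofibrations when $f$ is Reedy acyclic, the triviality propagates precisely to yield the asserted statement. This analysis also explains the asymmetry in the statement: no amount of triviality on the simplicial side would help, since neither boundary inclusion $\partial\Delta[n]_+ \hookrightarrow \Delta[n]_+$ is ever a weak equivalence in $\SSet_*$, so the only acyclicity entering the final pushout-product is the acyclicity of the Reedy latching maps of $f$.

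The main obstacle I anticipate is the explicit combinatorial identification of the latching map of $f \Box i_n$ as the double pushout-product described above. Unpacking the latching colimits of $X \wedge_S \Delta[n]_+$ in terms of nondegenerate simplices of the bisimplex $\Delta[n] \times \Delta[m]$ and matching them with the Reedy source of the pushout-product requires careful indexing. Everything else is formal once this identification is in place, and the asymmetric form of the SM7-style axiom in Proposition~\ref{SM7} drops out naturally from the fact that only one of the three pushout-product factors is hypothesized to carry any homotopical information.
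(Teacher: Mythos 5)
Your proposal is correct and is essentially the argument the paper itself relies on: the paper gives no independent proof here, deferring to \cite[7.4]{RSS} and \cite[5.4.1]{Ho} (or ``the methods in the proof of Proposition \ref{compatible}''), and those methods are exactly your reduction to the generating cofibrations $i_n\colon \partial\Delta[n]_+ \to \Delta[n]_+$ followed by an identification of the resulting map with the Reedy latching condition on $f$. The only place your write-up should be tightened is the middle step: the level-$m$ latching map of $f \Box i_n$ is not literally a pushout-product of the \emph{latching map} of $f$ against a simplicial map; rather it is the pushout-product, for the evaluation bifunctor $\C^{\Delta} \times \SSet_* \to \C$, $(A,K)\mapsto A\wedge K$, of $f$ \emph{itself} against the simplicial pushout-product $i_n \Box (\partial\Delta[m]_+ \to \Delta[m]_+)$, and one then runs the saturation argument a second time in the $\SSet_*$-variable, where the generators $\partial\Delta[k]_+ \to \Delta[k]_+$ produce precisely the $k$-th Reedy latching maps of $f$ (this is where the (acyclic) cofibrancy of those latching maps, and hence the asymmetry you correctly identify, enters). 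With that bookkeeping fixed, your argument is the standard one and goes through.
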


\begin{proof}
See \cite[7.4]{RSS} or \cite[5.4.1]{Ho} (or rather its pointed analogue \cite[5.7.1]{Ho}); or use the methods in the proof of Proposition \ref{compatible}.
\end{proof}

This is close to the SM7-axiom, but the pushout-product need not be a weak equivalence when $i$ is a trivial cofibration.  \\
Now we can characterize frames:

\begin{prop}
\label{frames}
Let $X$ be a cosimplicial object in a model category $\C$. Then $X \wedge -: \SSet \rightarrow \C$ is left Quillen if and only if $X$ is Reedy cofibrant and for all standard maps $\Dn_+ \rightarrow \Dm_+$ of standard simplices, $X \wedge \Dn_+ \rightarrow X \wedge \Dm_+$ is a weak equivalence in $\C$.
\end{prop}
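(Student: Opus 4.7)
The statement is an equivalence, so I will treat the two directions separately.

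For the forward direction $(\Rightarrow)$, suppose $X \wedge -$ is left Quillen. The key observation is that, since $X \wedge -$ preserves colimits as a left adjoint, there is a natural identification $X \wedge \partial \Dn_+ \cong L_n X$ of the image of the pointed boundary with the $n$-th latching object: this holds because $\partial \Dn_+$ is the colimit in $\SSet_*$ of the $\Delta[k]_+$ for $k < n$ along the appropriate face maps. Under this identification the $n$-th latching map is simply $X \wedge (\partial \Dn_+ \hookrightarrow \Dn_+)$, a cofibration in $\C$ since $X \wedge -$ is left Quillen; so $X$ is Reedy cofibrant. For the weak equivalence condition, every $\Dn_+$ is cofibrant in $\SSet_*$ with contractible underlying simplicial set, so any standard map $\Dn_+ \to \Dm_+$ is a weak equivalence between cofibrant objects; Ken Brown's lemma applied to $X \wedge -$ then yields a weak equivalence in $\C$.

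For the backward direction $(\Leftarrow)$, suppose $X$ is Reedy cofibrant and every standard map goes to a weak equivalence. Preservation of cofibrations is the easy half: via the identification $X \wedge \partial \Dn_+ = L_n X$ above, Reedy cofibrancy is exactly the assertion that $X \wedge -$ sends each generating cofibration $\partial \Dn_+ \hookrightarrow \Dn_+$ to a cofibration in $\C$; since $X \wedge -$ preserves the pushouts, transfinite compositions, and retracts that build all cofibrations out of the generators, it preserves arbitrary cofibrations.

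The hard part, and the main obstacle, is preservation of acyclic cofibrations. My plan is to establish the stronger claim that $X \wedge -$ takes every weak equivalence between cofibrant pointed simplicial sets to a weak equivalence; together with cofibration-preservation, this gives the Quillen property. To prove the stronger claim, I would work along the skeletal filtration $\mathrm{sk}_0 K_+ \hookrightarrow \mathrm{sk}_1 K_+ \hookrightarrow \cdots$ of a cofibrant $K$: each stage is obtained from the previous one by a pushout along a coproduct of generating cofibrations $\partial \Dn_+ \hookrightarrow \Dn_+$, so applying $X \wedge -$ yields a corresponding pushout in $\C$ along latching maps $L_n X \to X_n$. Inductively comparing such pushouts for $K$ and $L$ along a weak equivalence $f: K \to L$ via the gluing (cube) lemma for weak equivalences between cofibrations, the hypothesis that every cosimplicial structure map $X_j \to X_{j'}$ is a weak equivalence is precisely what propagates the weak equivalence up the filtration.

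The genuinely subtle step will be making this skeletal induction rigorous: one must present a given weak equivalence $f$ as a sufficiently cellular map so that the gluing lemma applies at each stage, and one must exploit the structure-map hypothesis to see that the newly attached $X_n$-cells on each side are glued along weakly equivalent attaching data. Once this is done, $X \wedge f$ is a filtered colimit of weak equivalences between cofibrations, hence a weak equivalence, and the backward direction is complete.
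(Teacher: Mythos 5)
Your forward direction and the cofibration half of the converse are correct and coincide with what the paper does: Reedy cofibrancy of $X$ is by definition equivalent to $X \wedge -$ sending the generating cofibrations $\partial\Delta[n]_+ \rightarrow \Delta[n]_+$ to cofibrations (hence all cofibrations, since $X \wedge -$ preserves the pushouts, transfinite compositions and retracts that build them), and Ken Brown's lemma gives the weak equivalences $X \wedge \Delta[n]_+ \rightarrow X \wedge \Delta[m]_+$. The gap is in the only genuinely hard step, preservation of acyclic cofibrations, which you reduce to the claim that $X \wedge -$ preserves all weak equivalences and propose to prove by comparing skeletal filtrations via the gluing lemma. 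As described this cannot work: a weak equivalence $f\colon K \rightarrow L$ does not restrict to weak equivalences on skeleta (for $\Delta[1]_+ \rightarrow \Delta[0]_+$ the map of $0$-skeleta becomes $X_0 \vee X_0 \rightarrow X_0$ after applying $X \wedge -$, which is not a weak equivalence in general), so your inductive comparison of the two filtrations has no valid base case and the ``weakly equivalent attaching data'' you need is simply not there. The strategy is also logically inverted: preservation of all weak equivalences between cofibrant objects is the \emph{conclusion} of Ken Brown's lemma once preservation of acyclic cofibrations is known, so proving it first is at least as hard, not easier.

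The standard route, and the one the paper takes, is the opposite reduction: acyclic cofibrations of pointed simplicial sets are retracts of relative cell complexes built from the horn inclusions $\Lambda^r[n]_+ \rightarrow \Delta[n]_+$, and acyclic cofibrations in $\C$ are closed under pushout, transfinite composition and retract, so it suffices to show that $X \wedge \Lambda^r[n]_+ \rightarrow X \wedge \Delta[n]_+$ is a weak equivalence; by two-out-of-three against the map to $X \wedge \Delta[0]_+$ (a weak equivalence by hypothesis on the target) this comes down to analyzing $X \wedge \Lambda^r[n]_+$, which requires its own nontrivial induction over the cellular structure of the horn using the cube lemma. That induction is precisely the content of \cite[3.6.8]{Ho}, which the paper simply cites for this direction; a self-contained proof must supply it, and your skeletal sketch does not.
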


\begin{proof}
We have seen in the above discussion that $X \wedge -$ preserves cofibrations if and only if $X$ is Reedy cofibrant. If $X \wedge -$ is left Quillen, it preserves weak equivalences between cofibrant objects by Ken Brown's Lemma \cite[1.1.12]{Ho} and hence $X \wedge \Dn_+ \rightarrow X \wedge \Dm_+$ is a weak equivalence in $\C$. The converse is proven in \cite[3.6.8]{Ho}: If $X \wedge -$ preserves cofibrations and the maps $X \wedge \Dn_+ \rightarrow X \wedge \Dm_+$ are weak equivalences, $X \wedge -$ preserves acyclic cofibrations. 
\end{proof}

Note that $X \wedge \Dn_+ \rightarrow X \wedge \Dm_+$ if and only if all cosimplicial structure maps of $X$ are weak equivalences if and only if the induced map $X \rightarrow c(X_0)$ is a level weak equivalence. This leads to the following definition:

\begin{definition}
A cosimplicial object in $\C$ is \emph{homotopically constant} if all cosimplicial structure maps are weak equivalences. It is a \emph{cosimplicial frame} or just \emph{frame} if it is Reedy cofibrant and homotopically constant. 
\end{definition}

By the above proposition, frames correspond to Quillen pairs $\SSet_* \rightleftharpoons \C$. We also have the following:

\begin{prop}
For any simplicial set $K$ and any frame $X$, the cosimplicial object $X \wedge_S K$ is again a frame.
\end{prop}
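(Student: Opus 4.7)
The strategy is to deduce the statement from Proposition \ref{frames}: $X \wedge_S K$ is a frame if and only if the associated adjunction $\SSet_* \rightleftharpoons \C$ is a Quillen pair. By the discussion immediately following the definition of $\wedge_S$, this adjunction is precisely the composite
\[
\SSet_* \xrightarrow{K \wedge -} \SSet_* \xrightarrow{X \wedge -} \C,
\]
so it suffices to check that each factor is left Quillen. The right-hand factor is left Quillen by Proposition \ref{frames} applied to the hypothesis that $X$ is a frame. For the left-hand factor one uses that every pointed simplicial set is cofibrant, whence the SM7 axiom for $\SSet_*$ implies that $K \wedge -$ preserves cofibrations and trivial cofibrations. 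The composite is then left Quillen, and applying the converse direction of Proposition \ref{frames} to this composite yields that its representing cosimplicial object, which is $X \wedge_S K$, is Reedy cofibrant and homotopically constant, i.e., a frame.

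There is no real obstacle here; the argument is essentially bookkeeping built on the correspondence between cosimplicial objects and adjunctions out of $\SSet_*$, plus the fact that composition of left Quillen functors is again left Quillen. If one prefers a direct verification without passing through adjunctions, then Reedy cofibrancy of $X \wedge_S K$ can be obtained from Proposition \ref{SM7} applied to the Reedy cofibration $\emptyset \to X$ and the cofibration $\ast \to K$ of pointed simplicial sets, while homotopical constancy follows by observing that the standard maps $K \wedge \Dn_+ \to K \wedge \Dm_+$ are weak equivalences between cofibrant pointed simplicial sets (since $K \wedge -$ preserves weak equivalences between cofibrant objects) and are therefore sent to weak equivalences in $\C$ by the left Quillen functor $X \wedge -$, by Ken Brown's lemma.
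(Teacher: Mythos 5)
Your proof is correct and follows essentially the same route as the paper: identify the adjunction represented by $X \wedge_S K$ as the composite of $(K \wedge -, (-)^K)$ with $(X \wedge -, \Map(X,-))$ and use that a composite of Quillen pairs is Quillen. The extra justification you give for $K \wedge -$ being left Quillen, and the alternative direct verification via Proposition \ref{SM7} and Ken Brown's lemma, are both fine but not needed beyond what the paper records.
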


\begin{proof}
A cosimplicial object $X$ is a frame if and only if the corresponding adjunction is Quillen. Since the adjunction $(K \wedge -, (-)^K): \SSet_* \rightleftharpoons \SSet_*$ is Quillen, the composite of this adjunction with $(X \wedge -, \Map(X,-))$ is Quillen if $X$ is a frame; hence $X \wedge_S K$ is a frame.
\end{proof}
Since we will mainly use smashing with $S^1$, we introduce simpler notation:

\begin{definition}
\label{Sigma}
For a cosimplicial object $X$, we write $\Sigma X$ for the cosimplicial object $X \wedge_S S^1$ and $\Omega (-)$ for the right adjoint of $\Sigma$.
\end{definition}

For frames, the simplicial mapping spaces in $\C^{\Delta}$ also carry homotopical information (in general, they do not because SM7 fails):

\begin{prop}
\label{mapping}
For cosimplicial objects $A$ and $B$ in $\C$ with $A$ a cosimplicial frame and $B$ Reedy fibrant, we have a natural isomorphism
\[
\pi_n \Map(A,B) \cong [A \wedge_S S^n, B]
\]
where $[-,-]$ denotes the morphism sets in $\Ho(\C^{\Delta})$.
\end{prop}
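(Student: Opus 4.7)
The plan is to establish that, for a cosimplicial frame $A$, the functor $A \wedge_S -: \SSet_* \rightarrow \C^\Delta$ (with the Reedy structure on $\C^\Delta$) is left Quillen. Once this is in hand, the natural isomorphism we want will fall out of standard Quillen-pair adjunction arguments combined with the usual description of homotopy groups of a Kan complex.

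For preservation of cofibrations, I would invoke Proposition \ref{SM7} with the Reedy cofibration $0 \rightarrow A$ (which is a Reedy cofibration precisely because $A$ is Reedy cofibrant, part of the frame hypothesis). Applied to any cofibration $i : K \rightarrow L$ of pointed simplicial sets, the pushout-product collapses to $A \wedge_S K \rightarrow A \wedge_S L$, which is therefore a Reedy cofibration. For preservation of trivial cofibrations, Proposition \ref{SM7} alone is insufficient (it is asymmetric in its two inputs), and this is exactly where homotopical constancy enters. The trick is to check that $A \wedge_S K \rightarrow A \wedge_S L$ is a levelwise weak equivalence in each Reedy degree $m$: in level $m$ the map is $A \wedge (K \wedge \Dm_+) \rightarrow A \wedge (L \wedge \Dm_+)$; by SM7 in $\SSet_*$ the map $K \wedge \Dm_+ \rightarrow L \wedge \Dm_+$ is still a trivial cofibration, and by Proposition \ref{frames} the functor $A \wedge - : \SSet_* \rightarrow \C$ is left Quillen (because $A$ is a frame) and so sends it to a weak equivalence in $\C$.

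With $A \wedge_S -$ now known to be left Quillen with right adjoint $\Map(A, -)$, the rest is formal. Reedy fibrancy of $B$ forces $\Map(A, B)$ to be a Kan complex, $S^n$ is cofibrant in $\SSet_*$, and the Quillen-pair adjunction descends to a natural bijection
\[
[A \wedge_S S^n,\, B]_{\Ho(\C^\Delta)} \;\cong\; [S^n,\, \Map(A,B)]_{\Ho(\SSet_*)} \;\cong\; \pi_n \Map(A,B),
\]
the last isomorphism being the standard identification of pointed homotopy classes out of $S^n$ with the homotopy groups of a Kan complex. The main obstacle I anticipate is the trivial-cofibrations half of the Quillen-pair verification: it cannot be extracted from Proposition \ref{SM7} (which is only one-sided), and it genuinely requires the full frame condition via Proposition \ref{frames} rather than mere Reedy cofibrancy of $A$.
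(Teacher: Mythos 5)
Your proof is correct and is essentially the paper's argument in different clothing: the paper shows that $A \wedge_S \Delta[-]_+$ is a cosimplicial frame on $A$ in $\C^{\Delta}$ (which by Proposition \ref{frames} is the same statement as ``$A \wedge_S -$ is left Quillen''), using exactly your two verifications --- Proposition \ref{SM7} with $* \rightarrow A$ for the cofibration half, and left-Quillenness of $A \wedge -$ for the levelwise-weak-equivalence half --- and then cites \cite[6.1.2]{Ho} where you instead unwind the derived adjunction directly. Your closing remark correctly identifies the one place where the full frame hypothesis (rather than mere Reedy cofibrancy) is needed.
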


\begin{proof}
The point is that $A \wedge_S \Delta[-]$ is a cosimplicial frame on $A$  (it is a bicosimplicial object in $\C$) if $A$ is a frame. Using Proposition \ref{SM7}, this is easy to see: The map $A \wedge_S \partial \Dn \rightarrow A \wedge \Dn$ is a cofibration by setting $X = *$, $Y = A$, $K = \partial \Dn$ and $L = \Dn$ in \ref{SM7}. The maps $A \wedge_S \Dn_+ \rightarrow A \wedge_S \Delta[m]_+$ coming from maps $\Dn_+ \rightarrow \Delta[m]_+$ are levelwise of the form $A \wedge( \Dn_+ \wedge \Delta[k]_+) \rightarrow A \wedge( \Delta[m]_+ \wedge \Delta[k]_+)$ and hence are weak equivalences since $A \wedge -$ is left Quillen and preserves weak equivalences between cofibrant objects. Hence the weak equivalences $\Dn_+ \rightarrow \Delta[m]_+$ are preserved, so $A \wedge_S \Delta[-]_+$ is homotopically constant. The claim now follows from \cite[6.1.2]{Ho} which states that the mapping spaces obtained from frames have the correct homotopy type.
\end{proof}

\subsection{Frames}

Now we have defined frames and seen some basic properties, we want to put together some results concerning existence and uniqueness of frames. Basically, we want to prove the following uniqueness theorem:

\begin{thm}
\label{homotopyframes}
Let $\Ho(Fr(\C))$ be the full subcategory of $\Ho(\C^{\Delta})$ with objects the cosimplicial frames. Then evaluation in degree $0$ $ev_0: \C^{\Delta} \rightarrow \C$ induces an equivalence of categories $\Ho(Fr(\C)) \rightarrow \Ho(\C)$. Furthermore, the suspension functor $\Sigma: \C^{\Delta} \rightarrow \C^{\Delta}$ restricts to a functor $\Sigma: \Ho(Fr(\C)) \rightarrow \Ho(Fr(\C))$ which is an equivalence if $\C$ is stable.
 \end{thm}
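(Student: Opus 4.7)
The plan is to tackle the two claims in the theorem separately---first the equivalence of categories induced by $\mathrm{ev}_0$, then the assertions about $\Sigma$.

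For essential surjectivity of $\mathrm{ev}_0$, given an object $X \in \C$ (cofibrant by our standing convention), I would factor the unique morphism $\emptyset \to c(X)$, with $c(X)$ the constant cosimplicial object, in the Reedy model structure of \ref{reedy} as a Reedy cofibration followed by a Reedy acyclic fibration $\emptyset \rightarrowtail A \stackrel{\sim}{\twoheadrightarrow} c(X)$. Then $A$ is Reedy cofibrant, and because $A \to c(X)$ is a level weak equivalence and $c(X)$ is trivially homotopically constant, $A$ too is homotopically constant, hence a frame. Evaluation at $0$ produces a weak equivalence $A_0 \to X$, so $\mathrm{ev}_0\colon \Ho(Fr(\C)) \to \Ho(\C)$ hits every isomorphism class.

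For full-faithfulness, given cosimplicial frames $A, B$, I would first replace $B$ by a Reedy fibrant frame $B'$ by factoring $B \to \ast$ as a Reedy acyclic cofibration $B \stackrel{\sim}{\rightarrowtail} B'$ followed by a Reedy fibration; homotopical constancy and Reedy cofibrancy are preserved along this level equivalence, so $B'$ is again a frame. Proposition \ref{mapping} then gives $\pi_0 \Map_{\C^\Delta}(A, B') \cong [A, B']_{\Ho(\C^\Delta)}$. The crucial technical input---essentially the content of \cite[5.4.9]{Ho} and its pointed analogue---is that this same simplicial mapping space models the homotopy function complex $\mathrm{map}^h_\C(A_0, B'_0)$ in $\C$, whose $\pi_0$ is $[A_0, B'_0]_{\Ho(\C)}$, and that the identification is induced by $\mathrm{ev}_0$. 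Combining these isomorphisms yields the desired bijection $[A, B]_{\Ho(Fr(\C))} \xrightarrow{\sim} [A_0, B_0]_{\Ho(\C)}$ on hom-sets.

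For the assertions about suspension, the preceding proposition on $X \wedge_S K$ already gives that $\Sigma A = A \wedge_S S^1$ is a frame whenever $A$ is, so $\Sigma$ descends to an endofunctor of $\Ho(Fr(\C))$. Under the equivalence $\Ho(Fr(\C)) \simeq \Ho(\C)$ just established, this $\Sigma$ corresponds to the usual suspension: $\mathrm{ev}_0(\Sigma A) = A \wedge S^1$ is the derived image of $S^1$ under the left Quillen functor $A \wedge -\colon \SSet_* \to \C$ encoded by $A$, and this is by definition the suspension of $A_0 = A \wedge S^0$ in $\Ho(\C)$. When $\C$ is stable, suspension on $\Ho(\C)$ is an equivalence by hypothesis, so $\Sigma$ restricts to an equivalence of $\Ho(Fr(\C))$.

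The hardest step is the mapping-space identification in the fully faithful part: because the full SM7 axiom fails in $\C^\Delta$ (\ref{SM7}), one cannot conclude by formal nonsense that $\Map_{\C^\Delta}(A, B')$ computes the right homotopy function complex, and one has to exploit the framing property of $A$ genuinely, using that $A \wedge_S \Delta[-]_+$ is itself a (bi)cosimplicial frame as in the proof of \ref{mapping}.
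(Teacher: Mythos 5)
Your proof is correct, and its skeleton agrees with the paper's at the decisive points: essential surjectivity via factoring $* \rightarrow c(X)$ in the Reedy structure (this is exactly \ref{existenceframings}, i.e.\ \cite[5.2.8]{Ho}), and full faithfulness resting on the fact that $B \rightarrow c(B_0)$ is a level equivalence (and, after fibrant replacement, a Reedy acyclic fibration) for a frame $B$. Where you genuinely diverge is in how you extract full faithfulness from that fact. The paper splits it in two: fullness comes from lifting a morphism of cofibrant-fibrant objects to a morphism of frames (\ref{framescover}, i.e.\ \cite[5.5.1]{Ho}), and faithfulness from the derived adjunction $(ev_0^L, c^R)$, whose unit is an isomorphism on frames (\ref{uniqueframe}). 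You instead obtain a single hom-set bijection by computing $\pi_0\Map_{\C^{\Delta}}(A,B') \cong [A,B']$ via \ref{mapping} on one side, and on the other identifying $\Map_{\C^{\Delta}}(A,B')$ with the cosimplicial function complex $\Hom_{\C}(A\wedge \Delta[-]_+,B'_0)$ by lifting the Reedy cofibrations $A \wedge_S \partial\Dn_+ \rightarrow A\wedge_S \Dn_+$ from \ref{SM7} against the Reedy acyclic fibration $B' \rightarrow c(B'_0)$; this costs you the function-complex input from \cite[5.4]{Ho} plus a (routine but unstated) check that the composite bijection really is the map induced by $ev_0^L$, but it buys surjectivity and injectivity on hom-sets in one stroke and correctly isolates where the framing hypothesis on $A$ is used, given that SM7 fails in $\C^{\Delta}$. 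A further point in your favour: you actually prove the statement about $\Sigma$ --- it preserves frames by \ref{sigma2}, and conjugating the suspension of $\Ho(\C)$ through the equivalence $ev_0$ shows it is an equivalence when $\C$ is stable --- whereas the paper's written proof addresses only the $ev_0$ part and leaves the $\Sigma$ claim implicit.
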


For this end, we need to develop some more theory. This theorem subsumes most of the properties of frames in a very compact form; it is not formulated in \cite{Ho}, but all the ingredients for the proof can be found there. \\

The first question we want to answer is that of existence of frames: Given a cofibrant object $X$ of $\C$, can we find a frame $A$ with $A_0 =  X$? This is answered by the following proposition:

\begin{prop}
\label{existenceframings}
Let $\C$ be a pointed model category and $X$ a cofibrant object of $\C$. Then there is a left Quillen functor $L: \SSet_* \rightarrow \C$ with an isomorphism from $L(\Delta[0])$ to $X$; or equivalently, there is a frame $A$ on $X$, i.e., such that $A_0 \cong X$. If $X$ is fibrant, we may choose $A$  to be Reedy fibrant.
\end{prop}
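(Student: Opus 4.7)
The plan is to build $A$ level by level, starting with $A_0 := X$. Inductively, suppose we have constructed a truncated cosimplicial object $A^{(n-1)}$, consisting of objects $A_0,\ldots,A_{n-1}$ together with all coface and codegeneracy maps among them, such that (i) each latching map $L_k A \to A_k$ for $k \le n-1$ is a cofibration in $\C$, and (ii) there is a compatible family of weak equivalences $\varepsilon_k : A_k \xrightarrow{\sim} X$ which commute with all cosimplicial structure maps (viewing $X$ as the constant cosimplicial object $c(X)$). To extend the construction to level $n$, one must produce $A_n$ together with coface maps $d^i : A_{n-1} \to A_n$ and codegeneracies $s^j : A_n \to A_{n-1}$ satisfying the cosimplicial identities. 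In Reedy language this amounts to factoring the canonical map $L_n A \to M_n A$ through $A_n$, with the first factor being the latching map and the second the matching map.

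The compatible weak equivalences $\varepsilon_k$ induce a natural map $M_n A \to X$, hence a composite $L_n A \to M_n A \to X$. I would factor $L_n A \to X$ in $\C$ as a cofibration followed by an acyclic fibration, $L_n A \hookrightarrow A_n \xrightarrow{\varepsilon_n} X$, and then obtain the matching map $A_n \to M_n A$ by a lifting argument (using that the two routes $L_n A \to M_n A \to X$ and $L_n A \to A_n \to X$ agree, together with the cofibration $L_n A \hookrightarrow A_n$ lifting against the appropriate acyclic fibration assembled out of the $\varepsilon_{k}$ for $k < n$). This construction yields Reedy cofibrancy at level $n$ by definition, and via two-out-of-three applied to the triangle $A_{n-1} \to A_n \xrightarrow{\varepsilon_n} X$ together with $\varepsilon_{n-1}$, each coface map $A_{n-1} \to A_n$ is a weak equivalence; so the frame property is preserved.

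The main technical obstacle is the bookkeeping of the cosimplicial identities at the new level, in particular the compatibility among the codegeneracies. This is handled cleanly by phrasing the inductive step in the truncated Reedy model structure on $\C^{\Delta_{\le n}}$, where the extension data is precisely a factorization $L_n A \to A_n \to M_n A$ in $\C$, so that all identities are automatically encoded by the latching and matching objects themselves (compare \cite[5.2.8]{Ho}). For the Reedy fibrant strengthening when $X$ is already fibrant, the constant object $c(X)$ is Reedy fibrant, and I would refine the factorization at each level so that the matching map $A_n \to M_n A$ is also a fibration; this can be arranged because $M_n A$ is built as a finite limit from the (inductively fibrant) lower data together with $X$. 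The resulting $A$ is then both Reedy cofibrant and Reedy fibrant, with $A_0 = X$ on the nose.
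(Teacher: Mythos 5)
Your overall strategy is the same as the paper's: the paper disposes of this proposition in one line by appealing to the Reedy factorizations in $\C^{\Delta}$ (Hovey 5.2.8), and your level-by-level construction is exactly that argument unpacked, with the extension problem at level $n$ encoded by a factorization of $L_nA \rightarrow M_nA$. The frame property via two-out-of-three on $A_{n-1} \rightarrow A_n \rightarrow X$ and the Reedy-fibrant strengthening for fibrant $X$ (using that $c(X)$ is then Reedy fibrant) are both fine.

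There is, however, one step that does not close as written. Your lifting argument at stage $n$ needs the map $M_nA \rightarrow M_n(c(X)) = X$ to be an \emph{acyclic fibration}, and the standard Reedy lemma that guarantees this requires the relative matching maps $A_k \rightarrow M_kA \times_{M_k(c(X))} X = M_kA$ at all lower stages to be acyclic fibrations --- i.e., that the truncated map $A \rightarrow c(X)$ is a Reedy acyclic fibration, not merely a levelwise weak equivalence. But your construction produces $A_n \rightarrow M_nA$ only as \emph{some} lift; by two-out-of-three it is a weak equivalence, but there is no reason for it to be a fibration, so the hypothesis you need at stage $n+1$ is not propagated (matching objects are limits and do not preserve mere weak equivalences). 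The fix is standard and costs nothing: instead of factoring $L_nA \rightarrow X$ and then lifting, factor the canonical map $L_nA \rightarrow M_nA$ itself as a cofibration followed by an acyclic fibration. This simultaneously gives Reedy cofibrancy, makes $A \rightarrow c(X)$ a Reedy acyclic fibration (so the induction closes and $\varepsilon_n$ is a weak equivalence as the composite of two acyclic fibrations), and in the fibrant case yields Reedy fibrancy of $A$ for free --- which is precisely the construction in \cite[5.2.8]{Ho} that the paper cites.
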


\begin{proof}
This is basically a consequence of the factorizations in $\C^{\Delta}$; see \cite[5.2.8]{Ho}.
\end{proof}

\begin{prop}
\label{framescover}
Let $X, Y$ be cofibrant-fibrant  objects of $\C$, $f: X \rightarrow Y$ a morphism in $\C$. Then there are frames $A$ on $X$, $B$ on $Y$ together with a morphism $F: X \rightarrow Y$ covering $f$.
\end{prop}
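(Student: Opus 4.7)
The plan is to produce frames $A$ on $X$ and $B$ on $Y$ and a cosimplicial map $F\colon A \to B$ whose component in degree zero is $f$. To set up the argument, I would apply Proposition \ref{existenceframings} twice: once to the cofibrant object $X$, obtaining a frame $A$ on $X$ (which is then Reedy cofibrant by definition), and once to the cofibrant-fibrant object $Y$, obtaining a \emph{Reedy fibrant} frame $B$ on $Y$. With these in hand, no further cosimplicial constructions are required; $F$ will be produced by a single lifting argument in $\C^{\Delta}$.

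For the construction of $F$, I would exploit the constant cosimplicial object $c(-)$ as a mediator. For any cosimplicial object $Z$, the terminality of $[0]$ in $\Delta$ yields a canonical natural map $Z \to c(Z_0)$ whose level-$n$ component is induced by the unique morphism $[n] \to [0]$; when $Z$ is a frame this map is levelwise a composition of codegeneracies, hence a levelwise weak equivalence, and when $Z$ is additionally Reedy fibrant the remark after the Reedy definition identifies it as a Reedy fibration. Applied to $B$, this gives a Reedy acyclic fibration $B \to c(Y)$. Applied to $A$ and composed with $c(f)$, it gives a Reedy map $A \to c(X) \xrightarrow{c(f)} c(Y)$ which is simply $f$ in degree zero. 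Since $A$ is Reedy cofibrant, a lift
\[
\xymatrix{
\ast \ar[r]\ar[d] & B \ar[d]^-{\simeq} \\
A \ar[r] \ar@{.>}[ur]^-{F} & c(Y)
}
\]
exists by the model axioms on $\C^{\Delta}$ (Theorem \ref{reedy}). Because $B \to c(Y)$ is the identity $Y \to Y$ in degree zero, we have $F_0 = f$, as desired.

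The only non-formal ingredient is Proposition \ref{existenceframings} itself; once that is available, what remains is an entirely routine lifting argument. Accordingly I do not expect a serious obstacle — the slight subtlety, if anything, is to recognize that the two uses of ``$Z \to c(Z_0)$ is a weak equivalence when $Z$ is a frame'' and ``$Z \to c(Z_0)$ is a Reedy fibration when $Z$ is Reedy fibrant'' combine cleanly for the frame $B$ to make $B \to c(Y)$ a Reedy acyclic fibration, which is exactly what is needed to lift against the Reedy cofibrant object $A$.
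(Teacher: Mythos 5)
Your proof is correct. The paper itself gives no argument here beyond citing \cite[5.5.1]{Ho}, so let me compare your route with that reference and with what the paper does elsewhere. Hovey's 5.5.1 produces a frame on the \emph{map} $f$ by a factorization in the Reedy structure (factoring a suitable map built from a frame on $X$ and the constant object on $Y$), which has the advantage of not requiring $Y$ to be fibrant and of producing the target frame $B$ as part of the output. You instead fix the frames first --- an arbitrary frame $A$ on $X$ and a Reedy fibrant frame $B$ on $Y$, the latter available precisely because $Y$ is fibrant --- and then obtain $F$ by lifting $A \to c(X) \to c(Y)$ against $B \to c(Y)$. The key observations you use are all sound: $B \to c(Y)$ is a Reedy fibration by the remark following the Reedy definitions, a levelwise weak equivalence because $B$ is homotopically constant, hence an acyclic Reedy fibration by Theorem \ref{reedy} (which identifies the two notions of acyclic fibration), and so it has the right lifting property against the Reedy cofibration $\ast \to A$; evaluating the lifting square in degree zero gives $F_0 = f$. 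This is in fact exactly the technique the paper deploys later, in the degree-zero step of the proof of Theorem \ref{MAIN}(a), so your argument is entirely in the spirit of the paper and arguably more self-contained than the citation it offers; the only cost relative to Hovey's construction is the (here harmless) reliance on the fibrancy of $Y$.
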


\begin{proof}
This is essentially \cite[5.5.1]{Ho}.
\end{proof}

\begin{prop}
\label{uniqueframe}
Let $A, B$ be frames and $f, g: A \rightarrow B$ be two maps such that $f_0, g_0: A_0 \rightarrow B_0$ represent the same morphism in $\Ho(\C)$. Then $f = g$ in $\Ho(\C^{\Delta})$.
\end{prop}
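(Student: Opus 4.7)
The strategy is to promote $[f_{0}] = [g_{0}]$ in $\Ho(\C)$ to $[f] = [g]$ in $\Ho(\C^{\Delta})$ by a mapping-space argument based on Proposition~\ref{mapping}.

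First I would reduce to the case that $B$ is Reedy fibrant. Factor $B \to \ast$ in $\C^{\Delta}$ as a Reedy trivial cofibration $j : B \to B'$ followed by a Reedy fibration. Then $B'$ is Reedy cofibrant (since $B$ is and $j$ is a Reedy cofibration), and homotopically constant by two-out-of-three applied to $j$ (a level weak equivalence) and the cosimplicial structure maps of the frame $B$; so $B'$ is again a frame. Since $j$ is an isomorphism in $\Ho(\C^{\Delta})$ and $j_{0}$ is a weak equivalence, proving $j \circ f = j \circ g$ in $\Ho(\C^{\Delta})$ is equivalent to the desired conclusion, and we may replace $(B, f, g)$ by $(B', j \circ f, j \circ g)$.

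With $B$ Reedy fibrant, apply Proposition~\ref{mapping} with $n = 0$. In degree $n$ one has $(A \wedge_S S^{0})_{n} = A \wedge (\Delta[0]_{+} \wedge \Delta[n]_{+}) = A \wedge \Delta[n]_{+} = A_{n}$, with matching cosimplicial structure, so $A \wedge_S S^{0} \cong A$ and $\pi_{0}\Map(A,B) \cong [A,B]_{\Ho(\C^{\Delta})}$, with $f, g \in \Map(A,B)_{0}$ sent to their classes $[f], [g]$. The goal thus becomes placing $f$ and $g$ in the same path component of $\Map(A,B)$. For this, I would invoke the standard result from the theory of cosimplicial frames (\cite[Ch.~5]{Ho}) that, when $A$ is a frame and $B$ a Reedy fibrant frame, $\Map(A,B)$ models the derived mapping space from $A_{0}$ to $B_{0}$ in $\C$; concretely, $\pi_{0}\Map(A,B) \cong [A_{0}, B_{0}]_{\C}$ naturally, via $[h] \mapsto [h_{0}]$. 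The hypothesis $[f_{0}] = [g_{0}]$ then yields $[f] = [g]$ in $\Ho(\C^{\Delta})$.

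The main obstacle is the final identification $\pi_{0}\Map(A,B) \cong [A_{0}, B_{0}]_{\C}$, together with its compatibility with $ev_{0}$. A self-contained proof would use $A_{1}$, with its coface maps $d^{0}, d^{1}: A_{0} \to A_{1}$ and codegeneracy $s^{0}: A_{1} \to A_{0}$, as a cylinder object on the cofibrant object $A_{0}$: applying $A \wedge_S -$ to $\partial \Delta[1]_{+} \to \Delta[1]_{+} \to \Delta[0]_{+}$ gives a cofibration followed by a weak equivalence, exhibiting $A_{1}$ as a cylinder. A homotopy realising $[f_{0}] = [g_{0}]$ then yields a map $A_{1} \to B_{0}$ to be extended, level by level, to a cosimplicial homotopy $A \wedge_S \Delta[1]_{+} \to B$; each extension step is controlled by the pushout-product estimate of Proposition~\ref{SM7} together with the Reedy fibrancy of $B$.
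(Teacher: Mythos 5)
Your argument is correct, but it is genuinely different from the one in the paper. The paper's proof is a short adjunction argument: $(ev_0, c)$ (evaluation in degree $0$, constant cosimplicial object) is a Quillen pair, the hypothesis says exactly that $ev_0^L(f) = ev_0^L(g)$, and since a frame $B$ is homotopically constant the derived unit $B \rightarrow c^R ev_0^L(B)$ is an isomorphism in $\Ho(\C^{\Delta})$; naturality of the derived unit then forces $f = g$. No mapping spaces, no fibrant replacement of $B$, and no explicit homotopies are needed. Your route instead reduces to $B$ Reedy fibrant, identifies $[A,B]$ with $\pi_0\Map(A,B)$ via Proposition \ref{mapping}, and then produces an explicit left homotopy: the cylinder $A_1$ on $A_0$ gives a homotopy $h: A_1 \rightarrow B_0$ from $f_0$ to $g_0$, which you extend to a cosimplicial homotopy $A \wedge_S \Delta[1]_+ \rightarrow B$. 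That extension is cleanest phrased as a single lifting problem: the square with left edge the Reedy cofibration $A \wedge_S \partial\Delta[1]_+ \rightarrow A \wedge_S \Delta[1]_+$ (Proposition \ref{SM7}), right edge $B \rightarrow c(B_0)$ -- which is a Reedy fibration by the Remark after the definition of the Reedy structure and a level weak equivalence since $B$ is homotopically constant, hence a trivial Reedy fibration -- top edge $f \vee g$ and bottom edge $c(h)$ composed with the unit; the lift is the desired homotopy. One caution on your first option for closing the gap: Hovey's statement $\pi_0\Map_\ell(A,Y) \cong [A_0,Y]$ has target an object $Y$ of $\C$, not a cosimplicial object, so you still need the comparison $\Map(A,B) \rightarrow \Map(A,c(B_0))$ (again via the trivial Reedy fibration $B \rightarrow c(B_0)$) before you can quote it; your self-contained cylinder argument avoids this issue entirely. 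What your approach buys is more than the proposition asks for -- it identifies $\pi_0\Map(A,B)$ with $[A_0,B_0]$, which also gives fullness of $ev_0$ and feeds directly into Theorem \ref{homotopyframes} -- at the cost of being longer than the paper's two-line adjunction trick.
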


\begin{proof}
This is similar to \cite[5.5.2]{Ho}, though not quite the same. Let $ev_0: \C^{\Delta} \rightleftharpoons \C: c$ denote the adjunction given by evaluation in degree $0$ and constant cosimplicial object. This is a Quillen pair by the discussion after \cite[5.2.7]{Ho}. The assumptions of the proposition are such that for the derived functor $ev_0^L$, we have $ev_0^L(f) = ev_0^L(g)$. Since $B$ is homotopically constant, the map $B \rightarrow c(ev_0(B))$ is a weak equivalence; hence the counit of the derived adjunction is an isomorphism for frames. Since $c^R(ev_0^L(f)) = c^R(ev_0^L(g))$, it follows $f = g$ in $\Ho(\C^{\Delta})$ as desired.
\end{proof}

We will also need the following:

\begin{prop}
Let $A, B$ be frames, $f,g: A \rightarrow B$ maps which are equal in $\Ho(\C^{\Delta})$. Then the derived natural transformations $f^L, g^L: A \wedge^L - \rightarrow B \wedge^L -$ are equal
\end{prop}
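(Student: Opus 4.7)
The plan is to mimic, mutatis mutandis, the proof of the analogous statement in the simplicial stable frames section (the corollary following \ref{QQS}), using the cosimplicial SM7-like Proposition \ref{SM7} in place of Proposition \ref{compatiblesimp}.

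First I would establish the cosimplicial analogue of \ref{QQS}: for any cofibrant pointed simplicial set $K$, the functor $- \wedge K: \C^{\Delta} \rightarrow \C$ (that is, $A \mapsto A \wedge K \cong (A \wedge_S K)_0$) preserves weak equivalences between Reedy cofibrant objects and therefore has a left derived functor $- \wedge^L K$. To see this, take $X = *$ and $i: * \rightarrow K$ in Proposition \ref{SM7}: for any trivial Reedy cofibration $f: A \rightarrow B$ between Reedy cofibrant objects, the pushout-product $f \Box i$ is just the map $f \wedge K: A \wedge K \rightarrow B \wedge K$ and is a trivial cofibration. Ken Brown's lemma then upgrades this to preservation of weak equivalences between Reedy cofibrant objects.

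Next, exactly as in the simplicial stable case, the existence of $- \wedge^L K$ yields a commutative square of functors (up to natural isomorphism)
\[
\xymatrix{
\C^{\Delta} \ar[d] \ar[rr]^{- \wedge K} & & \C \ar[d] \\
\Ho(\C^{\Delta}) \ar[rr]^{- \wedge^L K} & & \Ho(\C)
}
\]
where the vertical arrows are the localization functors. By hypothesis the left vertical arrow sends both $f$ and $g$ to the same morphism; hence the clockwise composition does too, and so the counterclockwise composition does, giving $f \wedge K = g \wedge K$ in $\Ho(\C)$ for every cofibrant pointed simplicial set $K$. Since $A$ and $B$ are frames (in particular Reedy cofibrant), $A \wedge K$ and $B \wedge K$ are already cofibrant in $\C$, so this is literally the assertion that $f^L$ and $g^L$ agree componentwise on all cofibrant inputs; hence $f^L = g^L$ as derived natural transformations.

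The essential content is packaged in Proposition \ref{SM7}; once that is available, the remainder is a formal diagram chase identical in structure to the simplicial case, so no real obstacle arises. If anything, the only point requiring mild care is to note that, although the full SM7 axiom fails for $\C^{\Delta}$, the half we actually use (that $f \Box i$ is a trivial cofibration when $f$ is a trivial Reedy cofibration) does hold and is exactly what Ken Brown needs.
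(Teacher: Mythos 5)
Your proof is correct and follows essentially the same route as the paper's: the paper likewise fixes $K$, observes that $- \wedge K\colon \C^{\Delta} \rightarrow \C$ preserves cofibrations and acyclic cofibrations (citing \cite[5.4.2]{Ho} where you instead derive this from Proposition \ref{SM7}), and concludes via the same commuting square of localization functors. The only cosmetic point is that the pushout-product in \ref{SM7} with $i\colon * \rightarrow K$ is the map $f \wedge_S K$ of cosimplicial objects, whose level-$0$ component is your $f \wedge K$; since Reedy trivial cofibrations are levelwise weak equivalences and are cofibrations in level $0$, this identification is harmless.
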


\begin{proof}
This is no surprise, but still requires a proof; again compare with \cite[5.5.2]{Ho}. Let $\gamma:  (\C^{\Delta})^{cof} \rightarrow \Ho(\C^{\Delta})$ and $\phi: \C^{cof} \rightarrow \Ho(\C)$ denote the localization functors, where $(\D)^{cof}$ for a model category $\D$ stands for the full subcategory of $\D$ generated by the cofibrant objects (Recall our convention that the homotopy category only contains the cofibrant objects). Then $\gamma(f) = \gamma(g)$ by assumption. Fix a simplicial set $K$. Let $F: \C^{\Delta} \rightarrow \C$  be the functor given by evaluation at $K$, i.e., $F(A) = A \wedge K$. By \cite[5.4.2]{Ho}, $F$ preserves cofibrations and acyclic cofibrations, hence induces a functor on the homotopy categories which we will also denote by $F$. Then $F(\gamma(g)) = F(\gamma(f)$. But by definition, $F\circ \gamma = \phi \circ F$; this means that the two maps $A \wedge K \rightarrow B \wedge K$ induced by $f$ and $g$ are equal in $\Ho(\C)$, and this implies the claim. 
\end{proof}

This means that we can regard $\Ho(Fr(\C))$ as the category of left Quillen functors $\SSet_* \rightarrow \C$, localized at the natural weak equivalences, and thus as a category of derived left Quillen functors $\Ho(\SSet_*) \rightarrow \Ho(\C)$: We send an object $X$ of $Fr(\C)$ to the functor $X \wedge^L -: \Ho(\SSet_*) \rightarrow \Ho(\C)$ and a morphisms $f: X \rightarrow Y$ to the derived natural transformation; the proposition above implies that this is well-defined. 

Now we can prove the main theorem announced at the beginning of this chapter:

\begin{thm}
\label{homotopyframes}
Let $\Ho(Fr(\C))$ be the full subcategory of $\Ho(\C^{\Delta})$ determined by the cosimplicial frames. Then evaluation in degree $0$ $ev_0: \C^{\Delta} \rightarrow \C$ induces an equivalence of categories $\Ho(Fr(\C)) \rightarrow \Ho(\C)$. Furthermore, the suspension functor $\Sigma: \C^{\Delta} \rightarrow \C^{\Delta}$ restricts to a functor $\Sigma: \Ho(Fr(\C)) \rightarrow \Ho(Fr(\C))$ which is an equivalence if $\C$ is stable.
 \end{thm}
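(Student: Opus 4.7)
The plan is to show that $ev_0$ is essentially surjective, full, and faithful on $\Ho(Fr(\C))$, and then to handle the suspension statement separately. Well-definedness on the homotopy-categorical level is immediate: a frame is Reedy cofibrant and hence level cofibrant, and the evaluation functor $ev_0: \C^\Delta \to \C$ is left Quillen (with right adjoint the constant cosimplicial functor $c$), so it preserves weak equivalences between cofibrant objects. Essential surjectivity follows at once from Proposition \ref{existenceframings}, since under our convention every object of $\Ho(\C)$ is cofibrant and hence admits a frame. Faithfulness is exactly the content of Proposition \ref{uniqueframe}: two maps of frames that agree on level zero in $\Ho(\C)$ agree in $\Ho(\C^\Delta)$.

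For fullness, let $A, B$ be frames and $\phi \in \Ho(\C)(A_0, B_0)$. The first step is to Reedy-fibrantly replace $A$ and $B$: if $A \to A^f$ is an acyclic Reedy cofibration with $A^f$ Reedy fibrant, then $A^f$ is still Reedy cofibrant, and by two-out-of-three on each coface map it is still homotopically constant, so it is again a frame. Moreover, since $A^f$ is Reedy fibrant, $A^f_0$ is fibrant in $\C$. After such replacements I may assume $A_0, B_0$ are cofibrant-fibrant in $\C$, so $\phi$ is represented by an honest morphism $f: A_0 \to B_0$. Proposition \ref{framescover} then produces frames $A', B'$ on $A_0, B_0$ together with a map $F: A' \to B'$ covering $f$. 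Finally, for any cofibrant $X$ and any two frames $A, A'$ on $X$, the natural maps $A \to c(X) \leftarrow A'$ are both weak equivalences in $\C^\Delta$ (each cosimplicial object is homotopically constant, and $c(X)$ is Reedy cofibrant because $X$ is), giving a canonical isomorphism $A \cong A'$ in $\Ho(\C^\Delta)$ which covers the identity on $X$; this applies both to $A$ vs. $A'$ and to $B$ vs. $B'$. Composing $F$ with these isomorphisms in $\Ho(\C^\Delta)$ produces the desired lift of $\phi$.

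For the suspension, we have already noted in this section that $\Sigma A = A \wedge_S S^1$ is a frame whenever $A$ is, so $\Sigma$ restricts to $Fr(\C)$. To see that $\Sigma$ descends to $\Ho(\C^\Delta)$, apply Proposition \ref{SM7} with the cofibration $\ast \to S^1$ of pointed simplicial sets: this shows $\Sigma$ preserves (trivial) Reedy cofibrations with Reedy cofibrant source, and Ken Brown's lemma provides the derived functor. Under the equivalence $ev_0$, the induced suspension on $\Ho(Fr(\C))$ corresponds to the derived suspension on $\Ho(\C)$, because $(\Sigma A)_0 = A \wedge S^1$ is precisely the derived smash $A_0 \wedge^L S^1 = \Sigma A_0$. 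When $\C$ is stable the suspension is an equivalence on $\Ho(\C)$, so it is an equivalence on $\Ho(Fr(\C))$ as well.

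The main technical obstacle is identifying an arbitrary given frame with the specific frame delivered by Proposition \ref{framescover}, i.e., showing that the homotopy type of a frame in $\C^\Delta$ depends only (and canonically) on its zeroth object. The zigzag through $c(X)$ sketched above resolves this cleanly, which is why Reedy-fibrant replacement and the explicit adjunction $ev_0 \dashv c$ are the right tools; any approach that tried to directly lift maps between frames would be considerably more delicate.
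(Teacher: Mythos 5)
Your proof follows the same overall strategy as the paper's: essential surjectivity from Proposition \ref{existenceframings}, faithfulness from Proposition \ref{uniqueframe}, and fullness from Proposition \ref{framescover}. In fact you are more careful than the paper on the one point it glosses over, namely that \ref{framescover} only produces \emph{some} frames $A'$, $B'$ on $A_0$, $B_0$, so one must still identify these with the arbitrarily given frames $A$, $B$; your zigzag through $c(A_0)$ addresses exactly this. Your treatment of the suspension statement, which the paper's proof does not spell out at all, is also correct: Lemma \ref{sigma2} (equivalently, your application of Proposition \ref{SM7} to $\ast \to S^1$) gives the derived functor, and the identification $(\Sigma A)_0 = A \wedge S^1 \cong \Sigma(A_0)$ reduces the equivalence claim to the definition of stability.

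There is, however, one false assertion in your fullness argument: the constant cosimplicial object $c(X)$ on a cofibrant $X$ is \emph{not} Reedy cofibrant in general. Its first latching object is $X \vee X$ and the latching map is the fold map $X \vee X \to X$, which is almost never a cofibration (take $X = S^0$ in pointed simplicial sets); indeed, if $c(X)$ were Reedy cofibrant it would itself be a frame on $X$ and the whole theory would be vacuous. Fortunately the assertion is not needed. The unit maps $A \to c(A_0)$ and $A' \to c(A_0)$ are level weak equivalences because $A$ and $A'$ are homotopically constant, hence they become isomorphisms in the genuine homotopy category of $\C^{\Delta}$ irrespective of any cofibrancy of $c(A_0)$; the resulting composite isomorphism $A \cong A'$ is a morphism between two Reedy cofibrant objects and therefore lies in the full subcategory $\Ho(\C^{\Delta})$ in the paper's convention, and since $ev_0$ preserves all level weak equivalences, applying it to the zigzag shows the isomorphism covers the identity of $A_0$. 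With the erroneous parenthetical deleted and replaced by this observation, your argument is complete.
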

 
 \begin{proof}
 First note that $ev_0: \C^{\Delta} \rightarrow \C$ is left Quillen, so we indeed get a functor $ev_0: \C^{\Delta} \rightarrow \C$. \\ 
Let $X$ be an object of $\Ho(\C)$, i.e., a cofibrant object of $\C$. By \ref{existenceframings}, we find a frame $A$ on $X$; this means $ev_0^L(A) \cong X$, which proves essential surjectivity. \\
Let $g: X \rightarrow Y$ be a morphism in $\Ho(\C)$. We may up to isomorphism assume that $X, Y$ are cofibrant-fibrant; then $g$ is represented by an actual morphism $f: X \rightarrow Y$ in $\C$; by \ref{framescover}, we find frames $A$, $B$ on $X$ and $Y$ with a map $F: A \rightarrow B$ covering $f$; now, $ev_0^L(F) = g$. Hence $ev_0^L$ is full. \\
Now let $f, g: A \rightarrow B$ be two maps in $\Ho(\C^{\Delta})$ such that $ev_0^L(F) = ev_0^L(G)$; we may again up to isomorphism assume $A, B$ to be cofibrant-fibrant and that $f, g$ are represented by actual morphisms $F, G: A \rightarrow B$ in $\C^{\Delta}$. Then \ref{uniqueframe} implies that $f  = g$. Hence $ev_0^L$ is faithful; this finishes the proof.
 \end{proof}
 
 \subsection{Frames and the suspension functor} 

Let $\C$ be a model category. We write $(\Sigma, \Omega)$ for the adjoint pair $(- \wedge_S S^1, (-)^{S^1})$ on $\C^{\Delta}$. 

\begin{lem}
\label{sigma2}
For any model category $\C$, the functor $\Sigma: \C^{\Delta} \rightarrow \C^{\Delta}$ is a Quillen functor in the Reedy model structure and preserves cosimplicial frames.
\end{lem}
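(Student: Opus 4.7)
The plan is to dispatch the two claims separately, in both cases by feeding the right inputs into results already in the excerpt.

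For the Quillen property, I will use Proposition \ref{SM7} with the (generating) cofibration $i: * \to S^1$ of pointed simplicial sets. Concretely, given a Reedy cofibration $f: X \to Y$, I set $K = *$ and $L = S^1$ in the pushout-product statement. Since $X \wedge_S *$ and $Y \wedge_S *$ are the initial cosimplicial object, the pushout-product map $f \Box i$ degenerates to $f \wedge_S S^1 = \Sigma f: \Sigma X \to \Sigma Y$. Proposition \ref{SM7} then says that this is a Reedy cofibration, and trivial if $f$ is. Hence $\Sigma$ preserves Reedy cofibrations and acyclic Reedy cofibrations, so it is left Quillen (its right adjoint is $\Omega = (-)^{S^1}$).

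For the preservation of frames, the cleanest route is to invoke the proposition already proved in the excerpt stating that $X \wedge_S K$ is again a frame whenever $X$ is and $K$ is any pointed simplicial set; applying this to $K = S^1$ gives the claim directly. If one prefers a self-contained argument, one can verify the two defining properties by hand: Reedy cofibrancy of $\Sigma X$ is immediate from the first part, while homotopical constancy follows from the fact that $(\Sigma X)_n = X \wedge (S^1 \wedge \Delta[n]_+)$, and every standard cosimplicial structure map of $\Sigma X$ is of the form $X \wedge -$ applied to a weak equivalence between cofibrant pointed simplicial sets $S^1 \wedge \Delta[n]_+ \to S^1 \wedge \Delta[m]_+$; since $X$ is a frame, $X \wedge -$ is left Quillen and thus preserves such weak equivalences by Ken Brown's lemma.

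I do not anticipate a serious obstacle here: the lemma is essentially the observation that $\Sigma$ on $\C^\Delta$ is obtained by precomposing the adjunction associated to $X$ with the ordinary suspension-loop adjunction on $\SSet_*$, and all the bookkeeping has already been set up. The only point that needs a moment of care is correctly identifying $f \Box i$ with $\Sigma f$ when $K$ is the one-point pointed simplicial set, which is why I chose to phrase the Quillen step via the pushout-product rather than via an ad hoc direct argument.
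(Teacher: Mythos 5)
Your proposal is correct and follows essentially the same route as the paper: the Quillen property via Proposition \ref{SM7} applied to the cofibration $* \to S^1$ (the paper writes ``$K = L = S^1$'', but your reading $K = *$, $L = S^1$ is the one that actually produces $\Sigma f$ as the pushout-product), and frame preservation via the observation that $\Sigma X$ corresponds to the composite left Quillen functor $X \wedge (S^1 \wedge -)$, which is exactly the content of the earlier proposition you cite.
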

\begin{proof}
By setting $K = L = S^1$ in Proposition \ref{SM7}, we see that $\Sigma$ preserves cofibrations and acyclic cofibrations.  Furthermore, $\Sigma X$ is the cosimplicial object associated to the functor $X \wedge (S^1 \wedge -): \SSet \rightarrow \C$ which is left Quillen as composition of two left Quillen functors, thus $\Sigma X$ is a cosimplicial frame.
\end{proof}

Unfortunately, even if the underlying model category is stable, $\Sigma$ is not a Quillen equivalence (unless the model category is extremely odd); if we have a weak equivalence $\Sigma X \rightarrow Y$ with $X$ a frame and $Y$ Reedy fibrant, then $\Sigma X$ is again homotopically constant, and so $Y$ is homotopically constant. But looking at the definition of $\Omega Y$, it hardly has a chance to be homotopically constant, so the adjoint map $X \rightarrow \Omega Y$ cannot be a weak equivalence. To remedy this failure, we define another class of "weak equivalences" (which will in general NOT be part of a model structure):
\begin{definition}
A map $f: X \rightarrow Y$ of cosimplicial objects in a model category is a \emph{realization weak equivalence} if for all cosimplicial frames $A$, the induced map $[A,X] \rightarrow [A,Y]$ is an isomorphism in the homotopy category of $\C^{\Delta}$.
\end{definition} 

The definition is made to fit into a potential model structure where the frames are the cofibrant objects; such a model structure exists under the usual conditions which allow localization, see \cite{RSS} and \cite{Dusim}.  For us, it is mainly an auxiliary construction which is helpful to prove Proposition \ref{lifting}.

\begin{lem}
All weak equivalences are realization weak equivalences.
\end{lem}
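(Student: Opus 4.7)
The plan is to observe that this is essentially a formal consequence of how $\Ho(\C^{\Delta})$ is constructed from $\C^{\Delta}$. Any weak equivalence in the Reedy model structure becomes an isomorphism in $\Ho(\C^{\Delta})$ — this is a general fact about model categories, namely that localisation at the weak equivalences inverts precisely the weak equivalences. So given a weak equivalence $f:X\rightarrow Y$ of cosimplicial objects, $f$ represents an isomorphism in $\Ho(\C^{\Delta})$.

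From there the argument is purely categorical. In any category, post-composition with an isomorphism induces a bijection of morphism sets; applied to the homotopy category $\Ho(\C^{\Delta})$, this means that for every object $A$ of $\C^{\Delta}$ the map
\[
f_{*}\colon [A,X]\longrightarrow [A,Y]
\]
is a bijection. Since this holds for every $A$, it holds in particular for every cosimplicial frame $A$, which is exactly the condition in the definition of a realization weak equivalence.

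There is no real obstacle to this proof; the only thing worth spelling out is that the defining condition of a realization weak equivalence is a priori weaker than ``isomorphism in $\Ho(\C^{\Delta})$'' (it restricts the class of test objects $A$ to frames), so the lemma really is just a sanity check that the weaker notion is indeed weaker. One could mention that, although later lemmas (like \ref{lifting}) will exploit the fact that realization weak equivalences form a genuinely broader class than weak equivalences — for instance, maps like $\Sigma X \rightarrow Y$ between frames and Reedy fibrant objects, as discussed just before the definition — the present statement requires no frame-specific input.
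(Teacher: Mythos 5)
Your proof is correct and is exactly the paper's argument: the paper's own proof is the one-line observation that weak equivalences induce isomorphisms $[A,X]\rightarrow [A,Y]$ for all $A$, which you have simply spelled out via localisation and post-composition with an isomorphism. Nothing further is needed.
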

\begin{proof}
This is clear since weak equivalences $X \rightarrow Y$ induce isomorphisms $[A,X] \rightarrow [A,Y]$ for all $A$.
\end{proof}

\begin{lem}
Let $X$, $Y$ be frames and $f: X \rightarrow Y$ a realization weak equivalence. Then $f$ is a weak equivalence.
\end{lem}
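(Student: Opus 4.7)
The plan is to reduce to the equivalence $ev_0 \colon \Ho(Fr(\C)) \to \Ho(\C)$ established in Theorem \ref{homotopyframes} and then use the homotopical constancy of frames to upgrade a weak equivalence in degree $0$ to a level weak equivalence.

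First, I would observe that because $Fr(\C)$ is a full subcategory of $\C^{\Delta}$, the homotopy category $\Ho(Fr(\C))$ is a full subcategory of $\Ho(\C^{\Delta})$, so for frames $A$ and $X$ the set $[A,X]$ is the same whether computed in $\Ho(\C^{\Delta})$ or in $\Ho(Fr(\C))$. The hypothesis that $f$ is a realization weak equivalence therefore says precisely that $f_\ast \colon [A,X] \to [A,Y]$ is a bijection for every object $A$ of $\Ho(Fr(\C))$. By the Yoneda lemma applied in $\Ho(Fr(\C))$, this forces $f$ to be an isomorphism in $\Ho(Fr(\C))$.

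Next, apply the equivalence of categories $ev_0 \colon \Ho(Fr(\C)) \to \Ho(\C)$ from Theorem \ref{homotopyframes}. Since equivalences reflect (and preserve) isomorphisms, the morphism $ev_0(f) = f_0$ is an isomorphism in $\Ho(\C)$. Because $X_0$ and $Y_0$ are cofibrant (the latching map $\ast \to X_0$ is a cofibration as $X$ is Reedy cofibrant, and similarly for $Y$), the general fact that a map in $\C$ becomes an isomorphism in the homotopy category if and only if it is a weak equivalence (which reduces, after fibrant replacement and $2$-out-of-$3$, to the standard statement for cofibrant-fibrant objects) tells us that $f_0 \colon X_0 \to Y_0$ is already a weak equivalence in $\C$.

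Finally, to conclude that $f$ is a \emph{level} weak equivalence of cosimplicial objects, I would use that both $X$ and $Y$ are homotopically constant. For each $n$, the canonical cosimplicial structure maps $X_0 \to X_n$ and $Y_0 \to Y_n$ (obtained as iterated cofaces) are weak equivalences, and they fit into a commutative square with $f_0$ and $f_n$. The $2$-out-of-$3$ property then gives that $f_n$ is a weak equivalence for every $n$, which is the definition of a weak equivalence in $\C^{\Delta}$.

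The only nonformal input is Theorem \ref{homotopyframes}; once that equivalence is in hand, every step above is essentially categorical, so I do not expect a serious obstacle. The point most worth being careful about is the first step: one must use Yoneda in the full subcategory $\Ho(Fr(\C))$ rather than in $\Ho(\C^{\Delta})$, since the realization weak equivalence condition only tests against frames, not against arbitrary cosimplicial objects.
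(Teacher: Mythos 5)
Your proof is correct, and its first half is in essence identical to the paper's: the paper also runs the ``test against $Y$ to get a right homotopy inverse $g$, then test against $X$ to see $g\circ f\simeq \mathrm{Id}_X$'' argument, which is exactly your Yoneda step unwound (the paper first reduces to $X$, $Y$ Reedy fibrant so that $[-,-]$ can be manipulated via actual representatives and homotopies). Where you diverge is in the concluding step. The paper stops as soon as it has a two-sided homotopy inverse: $f$ is a homotopy equivalence between Reedy cofibrant--fibrant objects and hence a weak equivalence by the Whitehead-type theorem for model categories. You instead record that $f$ is an isomorphism in $\Ho(Fr(\C))$, push it down to degree $0$ along $ev_0$, invoke the fact that a map of cofibrant objects inverted in $\Ho(\C)$ is a weak equivalence, and then climb back up the cosimplicial degrees using homotopical constancy and two-out-of-three. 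Both routes are sound and of comparable length; yours trades the fibrant-replacement reduction and the Whitehead theorem for a reliance on \cite[1.2.10]{Ho} (weak equivalences are exactly the maps inverted in the homotopy category). Note also that your detour through $ev_0$ is not actually needed: once $f$ is an isomorphism in $\Ho(\C^{\Delta})$ between Reedy cofibrant objects, \cite[1.2.10]{Ho} already gives that $f$ is a (level) weak equivalence directly, so the appeal to Theorem \ref{homotopyframes} and the degreewise bootstrapping can be omitted entirely.
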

\begin{proof}
We can assume without loss of generality that $X$ and $Y$ are Reedy fibrant. \\
By definition, $f$ induces an isomorphism $[Y,X] \rightarrow [Y,Y]$. Let $g: Y \rightarrow X$ be (a representative of) the preimage of the identity. This means $f \circ g \simeq Id_Y$.
Under the isomorphism $[X,X] \rightarrow [X,Y]$, $g \circ f$ is mapped to $f \circ g \circ f \simeq f$; but since $Id_X $is also mapped to $f$, we must have $g \circ f \simeq Id_X$, hence $f$ is a homotopy equivalence and thus a weak equivalence.
\end{proof}

Realization weak equivalences also have the expected behaviour with respect to the suspension and loop functor:

\begin{prop}
\label{stability}
Let $X$ be a cosimplicial frame over a stable model category $\C$ and $Y$ a Reedy fibrant cosimplicial object. Then a map $f: \Sigma X \rightarrow Y$ is a realization weak equivalence if and only its adjoint $\tilde{f}: X \rightarrow \Omega Y$ is a realization weak equivalence.
\end{prop}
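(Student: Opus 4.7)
The plan is to convert both realization weak equivalence conditions into the same condition via the derived adjunction $(\Sigma,\Omega)$ and the fact that $\Sigma$ is an equivalence on $\Ho(Fr(\C))$.

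First I would set up the comparison on a single frame $A$. By Lemma~\ref{sigma2}, $(\Sigma,\Omega)$ is a Quillen pair on $\C^{\Delta}$ and $\Sigma A$ is again a frame; since $A$ is Reedy cofibrant and $Y$ is Reedy fibrant, the derived adjunction gives a natural bijection $[A,\Omega Y]\cong[\Sigma A,Y]$ in $\Ho(\C^{\Delta})$. Because $f$ and $\tilde f$ are adjoint, one checks directly that for any $h\colon A\to X$ the element $\tilde f\circ h$ of $[A,\Omega Y]$ corresponds under this bijection to $f\circ\Sigma h$ in $[\Sigma A,Y]$. Thus I obtain a commutative square
\[
\xymatrix{
[A,X]\ar[rr]^{\tilde f_*}\ar[d]_{\Sigma} & & [A,\Omega Y]\ar[d]^{\cong}\\
[\Sigma A,\Sigma X]\ar[rr]_{f_*} & & [\Sigma A,Y]
}
\]
where the right vertical map is the adjunction isomorphism, the bottom map is post-composition with $f$, and the left vertical map is application of $\Sigma$.

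The key input I would then invoke is that the left vertical map is an isomorphism. Since $\C$ is stable, Theorem~\ref{homotopyframes} says that $\Sigma\colon\Ho(Fr(\C))\to\Ho(Fr(\C))$ is an equivalence; because $\Ho(Fr(\C))$ is a full subcategory of $\Ho(\C^{\Delta})$, fully faithfulness of $\Sigma$ on $\Ho(Fr(\C))$ translates to $[A,X]\cong[\Sigma A,\Sigma X]$ for all frames $A$ and $X$. Hence the top map is an isomorphism iff the bottom map is.

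Finally I would conclude the biconditional. The map $\tilde f$ is a realization weak equivalence iff the top map is an isomorphism for every frame $A$; by the square, iff the bottom map $[\Sigma A,\Sigma X]\to[\Sigma A,Y]$ is an isomorphism for every frame $A$. The map $f$ is a realization weak equivalence iff $[B,\Sigma X]\to[B,Y]$ is an isomorphism for every frame $B$. To pass between these two formulations I use essential surjectivity of $\Sigma$ on $\Ho(Fr(\C))$: every frame $B$ is weakly equivalent (in $\C^{\Delta}$) to some $\Sigma A$ with $A$ a frame, and weak equivalences induce bijections on $[-,Z]$, so testing on all $B$ is equivalent to testing on all $\Sigma A$. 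The main obstacle is really just bookkeeping — verifying the commutative square and making sure all derived-adjunction identifications respect the Reedy cofibrancy of $A$, $\Sigma A$ and the Reedy fibrancy of $Y$, $\Omega Y$ — but once the stability-based equivalence $\Sigma\colon\Ho(Fr(\C))\simeq\Ho(Fr(\C))$ from Theorem~\ref{homotopyframes} is in hand, the rest is a direct two-out-of-three style argument.
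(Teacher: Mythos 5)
Your proof is correct and follows essentially the same route as the paper: the same commutative square comparing $[A,X]\to[A,\Omega Y]$ with $[\Sigma A,\Sigma X]\to[\Sigma A,Y]$, the same appeal to stability via Theorem~\ref{homotopyframes} to see that the left vertical map is an isomorphism, and the same use of essential surjectivity of $\Sigma$ on $\Ho(Fr(\C))$ (replacing an arbitrary test frame $B$ by a weakly equivalent $\Sigma A$) to close the converse direction. No gaps.
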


\begin{proof}
Let $A$ be a cosimplicial frame. There is a commutative diagram
\[
\xymatrix{
[A,X] \ar[d]_{\Sigma} \ar[rr]^{[A,\tilde{f}]} & & [A, \Omega Y] \ar[d]_{\cong}\\
[\Sigma A, \Sigma X] \ar[rr]^{[\Sigma A, f]} & & [\Sigma A, Y]
}
\]
The map on the left is an isomorphism since $\C$ is stable, $A$ and $X$ are frames and the homotopy category of frames is equivalent to the homotopy category of $\C$. \\
If $f$ is a realization weak equivalence, the bottom map is an isomorphism since $\Sigma A$ is a frame, so the top map is an isomorphism as well; hence $\tilde{f}$ is a realization weak equivalence. Conversely, if $\tilde{f}$ is a realization weak equivalence, the top map is an isomorphism, hence the lower map also is for any frame $A$. For an arbitrary frame $B$, we can find a frame $A$
 together with a weak equivalence $g: \Sigma A \rightarrow B$ by Theorem \ref{homotopyframes}. In the commutative diagram
 \[
 \xymatrix{
 	[B, \Sigma X] \ar[rr]^{f_*} \ar[d]^{g^*} & & [B, Y] \ar[d]_{g^*}\\
	[\Sigma A, \Sigma X] \ar[rr]^{f_*} & & [\Sigma A, Y]
 }
 \]
 the two vertical maps are isomorphisms since $g$ is a weak equivalence, and the bottom map is an isomorphism. Hence the top map is an isomorphism, proving that $f$ is a realization weak equivalence.  
 \end{proof}

\begin{prop}
\label{lifting}
Let $f: X \rightarrow Y$ be a map of Reedy fibrant cosimplicial objects which is both a realization weak equivalence and a Reedy fibration. Then $f$ has the right lifting property with respect to cosimplicial frames.
 \end{prop}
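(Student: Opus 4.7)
The plan is to prove the stronger statement that, for every cosimplicial frame $A$, the induced map on simplicial mapping spaces
\[
\Map(A,f) \colon \Map(A,X) \longrightarrow \Map(A,Y)
\]
is a trivial Kan fibration of pointed simplicial sets; the desired lifting property then drops out by reading off $0$-simplices, since $A \wedge_S \Delta[0]_+ \cong A$ identifies vertices of $\Map(A,Z)$ with morphisms $A \to Z$ in $\C^{\Delta}$.

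First I would upgrade the SM7-analogue \ref{SM7} to the assertion that whenever $A$ is a frame, the adjunction $(A \wedge_S -, \Map(A,-)) \colon \SSet_* \rightleftharpoons \C^{\Delta}$ is a genuine Quillen pair. Cofibrations are preserved by \ref{SM7} applied with source $\ast$, so only acyclic cofibrations need attention. For an acyclic cofibration $i \colon K \to L$ in $\SSet_*$, the $n$-th level of $A \wedge_S i$ is $A \wedge (i \wedge \Dn_+)$, the image under the left Quillen functor $A \wedge -$ of an acyclic cofibration between cofibrant pointed simplicial sets (by SM7 in $\SSet_*$), hence a weak equivalence in $\C$. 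Thus $A \wedge_S i$ is a level, and so Reedy, weak equivalence, and $\Map(A,-)$ is right Quillen; consequently $\Map(A,f)$ is a Kan fibration.

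To see that it is also a weak equivalence, I would invoke \ref{mapping}: since $A$ is a frame and both $X$ and $Y$ are Reedy fibrant,
\[
\pi_n \Map(A,Z) \;\cong\; [A \wedge_S S^n, Z]
\]
in $\Ho(\C^{\Delta})$ for $Z \in \{X,Y\}$ and all $n \geq 0$. Each cosimplicial object $A \wedge_S S^n$ is again a cosimplicial frame, so the hypothesis that $f$ is a realization weak equivalence directly produces isomorphisms $[A \wedge_S S^n, X] \to [A \wedge_S S^n, Y]$ for every $n$. Hence $\Map(A,f)$ is a $\pi_\ast$-isomorphism of Kan complexes and therefore an acyclic Kan fibration; surjectivity on $0$-simplices gives the required lift of any map $A \to Y$ through $f$.

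The point needing real care is the very first step, namely promoting $A \wedge_S -$ from cofibration-preserving (all one gets from \ref{SM7} in general) to honestly left Quillen; this is exactly where the frame hypothesis on $A$ is decisive, and once it is secured the rest of the argument is formal manipulation of mapping spaces together with the identification in \ref{mapping}.
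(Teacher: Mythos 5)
Your first step---upgrading \ref{SM7} to the assertion that $(A \wedge_S -, \Map(A,-))$ is a genuine Quillen pair when $A$ is a frame---is correct, and it is exactly the argument the paper itself alludes to later: in the proof of Theorem \ref{MAIN} it notes that right Quillenness of $\Map(\omega^1 X,-)$ does not follow from \ref{SM7} but requires the separate check that $\omega^1 X \wedge_S -$ preserves acyclic cofibrations, with reference to \cite[5.4.3]{Ho}. Your levelwise computation $(A \wedge_S i)_n = A \wedge (i \wedge \Dn_+)$ combined with left Quillenness of $A \wedge -$ does that job. The paper's own proof of \ref{lifting} is shorter and softer: the bijection $[A,X] \rightarrow [A,Y]$ furnishes a lift up to homotopy, and then one cites the standard fact that a lift up to homotopy against a fibration with cofibrant source can be rectified within its homotopy class to a strict lift. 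Your argument is a concrete simplicial implementation of that same mechanism, so the two proofs are close in spirit; yours simply tries to prove more along the way.

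That extra strength is where the one genuine gap sits. The identification $\pi_n \Map(A,Z) \cong [A \wedge_S S^n, Z]$ of \ref{mapping} is an identification of homotopy groups \emph{based at the zero map}. Bijectivity of $[A \wedge_S S^n, X] \rightarrow [A \wedge_S S^n, Y]$ for all $n$ therefore only yields isomorphisms on $\pi_n$ for the basepoint components; it does not by itself make $\Map(A,f)$ a weak equivalence of Kan complexes, which requires control at all basepoints. The paper confronts precisely this subtlety in the proof of Theorem \ref{MAIN} and escapes it only via a loop-space argument that uses stability---an assumption not available in \ref{lifting}, which is stated for arbitrary model categories. Fortunately, your conclusion does not need the full ``trivial Kan fibration'' claim: you have established that $\Map(A,f)$ is a Kan fibration of Kan complexes and that $\pi_0 \Map(A,X) \rightarrow \pi_0 \Map(A,Y)$ is onto (the case $n=0$, i.e.\ surjectivity of $[A,X] \rightarrow [A,Y]$). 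Given a vertex $g$ of $\Map(A,Y)$, choose a vertex $h'$ of $\Map(A,X)$ with $f_* h'$ in the component of $g$, lift a connecting $1$-simplex along the fibration, and read off the endpoint: this produces a vertex $h$ with $f_* h = g$, which is the required strict lift. So the proof is repaired by weakening the trivial-fibration claim to this path-lifting argument---which is, not coincidentally, exactly the rectification step the paper cites.
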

 
 \begin{proof}
 By definition of a realization weak equivalence and since we have sufficient cofibrancy and fibrancy conditions, each map $A \rightarrow Y$ with $A$ a frame admits a lift up to homotopy $A \rightarrow X$, i.e. an actual map $A \rightarrow X$ making the diagram commutative up to homotopy. It is a standard fact about model categories that in such a triangle, with the right-hand map a fibration, one can change a lift up to homotopy within its homotopy class to an actual lift; see for example in the proof of \cite[6.3.7]{Ho}.
 \end{proof}

\section{Stable frames in arbitrary model categories}

We have already seen how we can describe Quillen pairs $\Sp \rightleftharpoons \C$ for a simplicial model category $\C$. Using $\Sigma$-cospectra, we will extend this description to all model categories. \\ 

\subsection{Stable frames}

We have the following analogue of \ref{stableframing} in arbitrary model categories:
\begin{thm}
\label{genstableframings}
Let $X$ be a $\Sigma$-cospectrum in the model category $\C$. Then the adjoint pair $X \wedge -: Sp \rightleftharpoons \C: \Map(X,-)$ is a Quillen pair if and only if all $X_n$ are cosimplicial frames and the structure maps $\Sigma X_n \rightarrow X_{n-1}$ are weak equivalences.
\end{thm}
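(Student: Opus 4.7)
The plan is to verify the ``if and only if'' by checking the three conditions of Lemma~\ref{detection}, using the following observation in both directions: pre-composition of the adjunction $(X\wedge -, \Map(X,-))$ with the Quillen pair $(F_n, Ev_n)$ yields an adjunction $\SSet_* \rightleftharpoons \C$ whose associated cosimplicial object is precisely $X_n$, so by Proposition~\ref{frames} this pre-composed adjunction is Quillen if and only if $X_n$ is a cosimplicial frame.

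For the ``$\Leftarrow$'' direction, assume each $X_n$ is a frame and each $\sigma_n \colon \Sigma X_n \to X_{n-1}$ is a weak equivalence. The right adjoint $\Map(X,-)$ is levelwise $\Map(X_n,-)$, which is right Quillen by the observation above; hence conditions (i) and (iii) of Lemma~\ref{detection} are immediate. For (ii), given a fibrant $Y$, the adjoint structure map of the spectrum $\Map(X,Y)$ at level $n-1$ is the map $\Map(X_{n-1}, Y) \to \Omega\Map(X_n, Y) \cong \Map(\Sigma X_n, Y)$ induced by $\sigma_n$, where the isomorphism uses the adjunction between $\Sigma$ and $\Omega$. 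Both $\Sigma X_n$ (by Lemma~\ref{sigma2}) and $X_{n-1}$ are frames, so $\sigma_n$ represents a natural transformation between two left Quillen functors $\SSet_* \to \C$, whose value on $S^0$ is $(\sigma_n)_0$, a weak equivalence in $\C$. By the equivalence $\Ho(Fr(\C)) \simeq \Ho(\C)$ of Theorem~\ref{homotopyframes}, this is enough to make the derived natural transformation $\Sigma X_n \wedge - \to X_{n-1}\wedge -$ a natural isomorphism. By \cite[1.3.18]{Ho} the adjoint natural transformation between right adjoints is then a natural weak equivalence on fibrant objects, as required.

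For the ``$\Rightarrow$'' direction, assume $(X\wedge-, \Map(X,-))$ is a Quillen pair. The bridging observation immediately gives that each $X_n$ is a cosimplicial frame. For the structure maps, $\sigma_n$ is the image under $X\wedge -$ of the canonical map $F_n S^1 \to F_{n-1} S^0$ of spectra; this is a stable equivalence between cofibrant spectra, so Ken Brown's lemma applied to the left Quillen functor $X\wedge -$ forces $(\sigma_n)_0 \colon X_n \wedge S^1 \to X_{n-1,0}$ to be a weak equivalence in $\C$. Since both $\Sigma X_n$ and $X_{n-1}$ are homotopically constant, the cosimplicial coface maps from degree $0$ to degree $k$ are weak equivalences on both sides, and 2-out-of-3 applied to the naturality square for $\sigma_n$ forces $(\sigma_n)_k$ to be a weak equivalence for each $k$, i.e. $\sigma_n$ is a level weak equivalence.

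The main obstacle is condition (ii) of the ``$\Leftarrow$'' direction: a direct SM7-style argument is unavailable in $\C^\Delta$ (see Proposition~\ref{SM7} and the caveat following it), so one cannot naively conclude that $\Map(\sigma_n, Y)$ is a weak equivalence just from $\sigma_n$ being a level weak equivalence of Reedy cofibrant objects. The detour through the equivalence of Theorem~\ref{homotopyframes} and the derived-adjunction principle \cite[1.3.18]{Ho} is what circumvents this failure of SM7 and turns the hypothesis on the structure maps into the needed statement for mapping spaces into fibrant objects.
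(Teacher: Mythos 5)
Your proof is correct, and its outer structure --- the bridge between $X_n$ and the composite adjunction $X \wedge F_n(-)$, the appeal to Lemma~\ref{detection}, and the treatment of conditions (i) and (iii) --- coincides with the paper's. The genuine difference is in how condition (ii) is handled. The paper argues directly on the right-adjoint side: for fibrant $A$, the functor $\Map(-,A)\colon (\C^{\Delta})^{op}\to\SSet_*$ takes acyclic Reedy cofibrations to acyclic fibrations (a lifting argument via the pushout-product statement \cite[5.4.1]{Ho}), hence by Ken Brown's lemma takes the level weak equivalence $\sigma_n$ between Reedy cofibrant objects to a weak equivalence of simplicial sets. You instead promote $(\sigma_n)_0$ to an isomorphism in $\Ho(Fr(\C))$ via the equivalence of Theorem~\ref{homotopyframes}, deduce that the derived natural transformation of left Quillen functors $\SSet_*\to\C$ is invertible, and dualize with \cite[1.3.18]{Ho}. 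This route is valid and non-circular (Theorem~\ref{homotopyframes} is established independently in Section~5), but it invokes heavier machinery than necessary, and the well-definedness of the passage from $\Ho(Fr(\C))$ to the category of derived functors --- which your step silently relies on --- is itself proved in Section~5 using the very same one-sided pushout-product property.

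One claim in your closing paragraph is mistaken: the caveat after Proposition~\ref{SM7} is that $f\Box i$ need not be acyclic when $i$, the map of \emph{simplicial sets}, is the acyclic cofibration. The half that condition (ii) requires --- $f$ an acyclic Reedy cofibration and $i$ an arbitrary cofibration of simplicial sets --- does hold, and this is precisely what the paper exploits. So the ``direct SM7-style argument'' is not unavailable; your detour through Theorem~\ref{homotopyframes} and \cite[1.3.18]{Ho} works, but it is not forced by any failure of SM7.
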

\begin{definition}
A $\Sigma$-cospectrum $X$ which is levelwise a frame and has weak equivalences $\Sigma X_n \rightarrow X_{n-1}$ is a \emph{stable frame} on the object $X_{0,0}$.
\end{definition}
\begin{proof} \emph{(of Theorem 6.1)}
For the "only if" direction, note that $X_n$ represents the left adjoint $X \wedge F_n(-): \SSet_* \rightarrow \C$ which is Quillen as composition of two left Quillen functors; hence $X_n$ is a frame. Furthermore, the structure maps $\Sigma X_n \rightarrow X_{n-1}$ is represented by the image under $X \wedge -$ of the natural transformation $F_n(-)  \wedge S^1 \rightarrow F_{n-1}(-)$ which induces a $\pi_*$-isomorphism $F_n(K)  \wedge S^1 \rightarrow F_{n-1}(K)$ for all simplicial sets $K$ and hence is a natural weak equivalence; since this is preserved by left Quillen functors,  $\Sigma X_n \rightarrow X_{n-1}$ represents a natural weak equivalence and hence is a weak equivalence. \\
Conversely, assume all $X_n$ are cosimplicial frames and the maps $\Sigma X_n \rightarrow X_{n-1}$ are weak equivalences. We will use Lemma \ref{detection} to prove that $(X \wedge -, \Map(X,-))$ is a Quillen pair. \\
To check i), let $f: A \rightarrow B$ be an acyclic fibration in $\C$.  We want to see $\Map(X,f): \Map(X,A) \rightarrow \Map(X,B)$ is an acyclic  fibration of spectra, i.e., a level weak equivalence and level fibration. But since $\Map(X,f)_n: \Map(X_n,A) \rightarrow \Map(X_n,B)$ is an acyclic fibration of simplicial sets because $\Map(X_n, -): \C \rightarrow \SSet_*$ is right Quillen for a cosimplicial frame $X_n$, this holds. Note that there are two meanings to $\Map$ here; $\Map(X,-)$ is a functor from $\C$ to Sp since $X$ is a $\Sigma$-cospectrum, and $\Map(X_n, -)$ is a functor from $\C$ to $\SSet$ since $X_n$ is a cosimplicial object.\\
To check ii), let $A$ be a fibrant object in $\C$. Then the simplicial sets $\Map(X_n,A)$ are Kan fibrant since $\Map(X_n,-)$ is right Quillen. The adjoint of the structure maps in $\Map(X,A)$ are the maps $\Map(X_n,A) \rightarrow \Map(\Sigma X_{n-1},A)$. To see that these maps are weak equivalences, it suffices to prove that the functor $\Map(-,A): (\C^{\Delta})^{op} \rightarrow \SSet$ maps weak equivalences between Reedy cofibrant cosimplicial objects to weak equivalences; by Ken Brown's lemma \cite[1.1.12]{Ho}, it is already enough to prove that $\Map(-,A)$ maps acyclic cofibrations between cofibrant objects to acyclic fibrations of simplicial sets.  \\
So let $f: X \rightarrow Y$ be an acyclic cofibration of Reedy cofibrant cosimplicial objects. We want to see that $\Map(f,A)$ is an acyclic fibration. Let $g: K \rightarrow L$ be a cofibration of simplicial sets. Consider the diagram 
\[
\xymatrix{
		K \ar[rrr] \ar[d]_g & & & \Map(Y,A) \ar[d]^{f^*} \\
		L \ar[rrr] & & & \Map(X,A)  }
\]
We need a lift in this diagram to conclude that the map on the right is a trivial fibration.  By playing around with the various adjunctions, one sees that a lift in this diagram is equivalent to a lift in the diagram
\[
\xymatrix{
		K \wedge Y \coprod_{K \wedge X} L \wedge X \ar[rrr]  \ar[d]_{g \diamond f} & & & A \\
		L \wedge Y }
\]
By \cite[5.4.1]{Ho}, the pushout-product on the left is an acyclic cofibration in $\C$; since $A$ is fibrant, we can find a lift. \\
For iii), let $f: A \rightarrow B$ be a fibration between fibrant objects. The map $\Map(X_n, A) \rightarrow \Map(X_n,B)$ is then a fibration since $\Map(X_n,-)$ is right Quillen; hence $\Map(X, A) \rightarrow \Map(X,B)$ is a level fibration. This finishes the proof. 
\end{proof}
\begin{thm}
\label{existence}
Let $\C$ be a stable model category and $A$ a cofibrant-fibrant object of $\C$. Then there is a stable frame $X$ on $A$, i.e., such that $X_{0,0} \cong A$; or equivalently, there is a left Quillen functor $L: Sp \rightarrow \C$ with $L(\U) \cong A$. This frame can be chosen to be fibrant in the model category $\C^{\Delta}(\Sigma)$.
\end{thm}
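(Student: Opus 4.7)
The plan is to build $X$ by induction on the level, producing in parallel the cosimplicial frames $X_n$ together with structure maps $\sigma_n \colon \Sigma X_n \to X_{n-1}$ that are weak equivalences and whose adjoints $X_n \to \Omega X_{n-1}$ are Reedy fibrations; the latter condition together with Reedy fibrancy of each $X_n$ will then give fibrancy in $\C^{\Delta}(\Sigma)$ by the criterion of Theorem \ref{cospectra} (setting $Y = \ast$). The argument follows the template of the simplicial case, Proposition \ref{existencestable}, but since $(\Sigma, \Omega)$ on $\C^{\Delta}$ is \emph{not} a Quillen equivalence even when $\C$ is stable, the na\"ive approach of directly factoring $\ast \to \Omega X_{n-1}$ does not produce a frame. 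Instead, one routes the desuspension through the equivalence $ev_0 \colon \Ho(Fr(\C)) \to \Ho(\C)$ of Theorem \ref{homotopyframes} and the realization weak equivalence machinery developed in the preceding section.

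For the base step, apply Proposition \ref{existenceframings} to the cofibrant-fibrant object $A$ to obtain a Reedy fibrant cosimplicial frame $X_0$ with $X_{0,0} \cong A$. For the inductive step, assume $X_{n-1}$ has been constructed as a Reedy fibrant frame. Since $\C$ is stable, suspension on $\Ho(\C)$ is an equivalence, so one can pick a cofibrant-fibrant object $A_n$ of $\C$ with $\Sigma A_n \cong X_{n-1,0}$ in $\Ho(\C)$ and frame it via Proposition \ref{existenceframings} to obtain a Reedy fibrant frame $X_n^0$ with $X_{n,0}^0 \cong A_n$. Computing $ev_0(\Sigma X_n^0) \cong X_n^0 \wedge S^1 = \Sigma A_n \cong X_{n-1,0} = ev_0(X_{n-1})$ and invoking Theorem \ref{homotopyframes}, the objects $\Sigma X_n^0$ and $X_{n-1}$ become isomorphic in $\Ho(Fr(\C))$. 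Because $\Sigma X_n^0$ is Reedy cofibrant (by Lemma \ref{sigma2}) and $X_{n-1}$ is Reedy fibrant, this isomorphism is represented by an actual weak equivalence $\tilde{\sigma} \colon \Sigma X_n^0 \to X_{n-1}$ in $\C^{\Delta}$.

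It remains to enforce the fibrancy condition on the adjoint side. Take $\eta \colon X_n^0 \to \Omega X_{n-1}$, the adjoint of $\tilde{\sigma}$, and note that $\Omega X_{n-1}$ is Reedy fibrant because $\Omega$ is right Quillen by Lemma \ref{sigma2}; by Proposition \ref{stability} the map $\eta$ is a realization weak equivalence. Now factor $\eta$ in the Reedy model structure as a trivial Reedy cofibration $X_n^0 \to X_n$ followed by a Reedy fibration $X_n \to \Omega X_{n-1}$. The object $X_n$ is then Reedy cofibrant, Reedy fibrant as the preimage under a Reedy fibration of a Reedy fibrant object, and homotopically constant because it is levelwise weakly equivalent to the frame $X_n^0$; hence $X_n$ is itself a cosimplicial frame. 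Two-out-of-three for realization weak equivalences (which are defined by detecting isomorphisms in $\Ho(\C^{\Delta})$) shows that $X_n \to \Omega X_{n-1}$ is a realization weak equivalence, and passing to its adjoint $\sigma_n \colon \Sigma X_n \to X_{n-1}$ via Proposition \ref{stability} together with the lemma that a realization weak equivalence between frames is an honest weak equivalence yields the desired structure map. The step I expect to be the main obstacle is exactly this passage from a derived isomorphism in $\Ho(Fr(\C))$ to a strict map with the correct fibrancy properties at each level; this is where the absence of a Quillen equivalence $(\Sigma,\Omega)$ on $\C^{\Delta}$ forces one to introduce the auxiliary frame $X_n^0$ and then factor, rather than delooping directly in $\C^{\Delta}$.
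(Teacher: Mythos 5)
Your proof is correct, and it reaches the theorem by a route that overlaps with, but does not coincide with, the paper's. For producing the structure maps, the paper's primary argument is more hands-on: it realizes the isomorphism $\Sigma Y \cong A$ by an honest weak equivalence $X_1 \wedge S^1 \to A$, extends this to a map $\Sigma X_1 \to cA$ into the constant cosimplicial object, and lifts along the Reedy acyclic fibration $X_0 \to cA$ (acyclic precisely because $X_0$ is Reedy fibrant and homotopically constant); your passage through the equivalence $ev_0 \colon \Ho(Fr(\C)) \to \Ho(\C)$ of Theorem \ref{homotopyframes} is exactly the alternative the paper itself sketches in the last sentence of its proof. The more substantive divergence is in how fibrancy in $\C^{\Delta}(\Sigma)$ is secured: the paper builds the stable frame first and then performs a single fibrant replacement in $\C^{\Delta}(\Sigma)$, appealing to the (unproved) remark that the factorizations there can be arranged so as not to change the already fibrant degree-$(0,0)$ object, whereas you install fibrancy level by level by factoring each adjoint $X_n^0 \to \Omega X_{n-1}$ into a trivial Reedy cofibration followed by a Reedy fibration and checking, via the characterization in Theorem \ref{cospectra}, that this suffices. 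Your version avoids any appeal to the internal structure of the factorizations in $\C^{\Delta}(\Sigma)$, at the price of invoking the realization-weak-equivalence machinery (Proposition \ref{stability}, two-out-of-three, and the lemma that a realization weak equivalence between frames is a weak equivalence) already at this stage, whereas the paper defers that machinery to the proof of Theorem \ref{MAIN}. The individual steps all check out: $\Sigma X_n^0$ is Reedy cofibrant by Lemma \ref{sigma2} and $X_{n-1}$ is Reedy fibrant, so the isomorphism in $\Ho(\C^{\Delta})$ is represented by an actual weak equivalence; the factored object $X_n$ inherits Reedy cofibrancy from the trivial cofibration, Reedy fibrancy from the fibration to the Reedy fibrant object $\Omega X_{n-1}$, and homotopy constancy from the levelwise weak equivalence to the frame $X_n^0$; and Proposition \ref{stability} converts the realization weak equivalence $X_n \to \Omega X_{n-1}$ back into a genuine weak equivalence $\Sigma X_n \to X_{n-1}$ of frames.
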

\begin{proof}
Let $X_0$ be a Reedy fibrant cosimplicial frame on $A$. $\C$ is stable, thus there is a cofibrant-fibrant object $Y$ such that $\Sigma Y \cong A$ in the homotopy category. \\
Choose a Reedy fibrant cosimplicial frame $X_1$ on Y (see \cite[5.2.8]{Ho}), then $X_1 \wedge S^1 \cong \Sigma Y \cong A$ in the homotopy category. Since $X_1 \wedge S^1$ is cofibrant and $A$ is fibrant, this isomorphism in the homotopy category is realized by a weak equivalence $g: X_1 \wedge S^1 \rightarrow A$. This extends to a map $f: \Sigma X_1\rightarrow cA$ where $cA$ is the constant cosimplicial object on $A$. Now the map $X_0 \rightarrow cA$ adjoint to the identity in dimension 0 is a Reedy acyclic fibration since $X_0$ is Reedy fibrant and homotopically constant; thus we may lift $f$ to a map $G: \Sigma X_1 \rightarrow X_0$

The map $G$ is a weak equivalence since it is in level $0$ (the other two maps are weak equivalences in level 0) and both $X_0$ and $\Sigma X_1$ are homotopically constant. Now we just repeat this with $X_1$ and the object $Y$ instead of $X_0$ and $A$ to obtain $X_2$ and a weak equivalence $\Sigma X_2 \rightarrow X_1$; iterating this procedure defines a stable frame $X$. By replacing $X$ fibrantly, which by construction of the factorizations in $\C^{\Delta}{\Sigma}$ and \cite[5.2.8]{Ho} is possible without changing the fibrant object $X_{(0,0)}$, we get a fibrant stable frame, i.e., a stable frame which is pointwise Reedy fibrant and in which all adjoint structure maps $X_n \rightarrow \Omega X_{n-1}$ are fibrations. \\
Alternatively, one can directly refer to Theorem \ref{homotopyframes}, which directly allows one to find a cosimplicial frame $X_1$ with a weak equivalence $\Sigma X_1 \rightarrow X_0$ since $\Sigma$ induces an equivalence on the homotopy category of frames, and iterating this. 
\end{proof}

\begin{rem}
In the case of a simplicial model category, it was possible to choose functorial stable frames if one assumed functorial factorizations. However, the construction just given is highly unfunctorial, and it seems to be impossible to make it functorial. The problem is that the suspension is not necessarily realized by a left Quillen functor. Assuming functorial factorizations, there are always functors $- \wedge S^1: \C \rightarrow \C$ and $\Omega: \C \rightarrow \C$ which have derived functors and represent suspension and loop on the homotopy category. Using these functors, we can find a functorial desuspension of a cofibrant-fibrant object $X$ - we may take a cofibrant replacement $Y$ of $\Omega X$. This yields a weak equivalence $Y \rightarrow \Omega X$; unfortunately, the functors $- \wedge S^1$ and $\Omega(-)$ are not adjoint on the nose, only their derived functors are adjoint. So we cannot use some adjunction isomorphism to produce a weak equivalence $Y \wedge S^1 \rightarrow X$; we only get an isomorphism in the homotopy category $Y \wedge S^1 \cong X$, and there is no canonical way to choose a representative. This makes it plausible that stable frames cannot be constructed functorially.
\end{rem}

For studying derived natural transformations, we will need that two homotopic maps of stable frames induce the same derived natural transformations. For this end, we need some compatibility between the model structures on $\C^\Delta(\Sigma)$, $\C$ and Sp. The next proposition is our equivalent of \cite[5.4.1]{Ho}, and the proof is virtually the same as the one given there.

\begin{prop}
\label{compatible}
Let $\C$ be a model category. Assume $f: X \rightarrow Y$ is a cofibration in $\C^\Delta(\Sigma)$  and $g: A \rightarrow B$ is a cofibration of spectra. Then the induced pushout-product map in $\C$ $f  \Box g: X \wedge B \coprod_{X \wedge A} Y \wedge A \rightarrow Y \wedge B$ is a cofibration which is trivial if $f$ is.
\end{prop}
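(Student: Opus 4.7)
The plan is to mimic the standard strategy for SM7-style statements: reduce the cofibration $g$ in $\Sp$ to a set of generators, then appeal to the partial SM7 axiom in $\C^{\Delta}$ established in Proposition \ref{SM7} after identifying the pushout-product with one built from cosimplicial smash products.

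First, the class of spectrum cofibrations $g$ for which $f\Box g$ is a cofibration in $\C$ (and is trivial whenever $f$ is) is closed under cobase change, retracts and transfinite composition, since $(f\Box -)$ preserves these colimit operations in the second variable and the cofibrations (resp.\ trivial cofibrations) in $\C$ are closed under the same. Hence it suffices to verify the statement when $g$ is one of the generating cofibrations $F_m(i)\colon F_m(\partial\Dn_+)\to F_m(\Dn_+)$ of the level model structure on $\Sp$; recall that the cofibrations of the level and stable model structures coincide.

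Next, for a pointed simplicial set $K$, the chain of adjunctions
\[
\Hom_{\Sp}(X\wedge F_m K,-) \;\cong\; \Hom_{\Sp}(F_m K,\Map(X,-)) \;\cong\; \Hom_{\SSet_*}(K,\Map(X_m,-))
\]
combined with the Yoneda lemma yields a natural isomorphism $X\wedge F_m K \cong X_m\wedge_S K$, where $X_m$ denotes the $m$-th cosimplicial object of the $\Sigma$-cospectrum $X$. Under this identification, the pushout-product $f\Box F_m(i)$ in $\C$ becomes precisely the pushout-product $f_m\Box_S i$ appearing in Proposition \ref{SM7}, with $f_m\colon X_m\to Y_m$ the level-$m$ component of $f$ viewed as a morphism of cosimplicial objects.

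Finally, by Theorem \ref{cospectra} a (trivial) cofibration in $\C^{\Delta}(\Sigma)$ is levelwise a (trivial) Reedy cofibration of cosimplicial objects, so $f_m$ is a Reedy cofibration which is trivial whenever $f$ is; Proposition \ref{SM7} then immediately yields the conclusion. The only step needing care is the identification $X\wedge F_m K \cong X_m\wedge_S K$, which is routine once the adjunctions involved are carefully unwound but easy to fumble because of the overloaded uses of the symbol $\wedge$; no genuine obstacle should arise beyond this bookkeeping.
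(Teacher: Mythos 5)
Your proposal is essentially the paper's own proof: reduce to the generating cofibrations $F_m(\partial\Dn_+ \to \Dn_+)$ via the standard closure argument (the paper cites \cite[4.2.4]{Ho}), identify the pushout-product through the adjunctions, and conclude from the fact that $f_m$ is a Reedy cofibration. The one slip is exactly the bookkeeping you warned about: the adjunction chain gives $X \wedge F_m K \cong X_m \wedge K$, an \emph{object of} $\C$, not the cosimplicial object $X_m \wedge_S K$; consequently $f \Box F_m(i)$ is not literally the map of Proposition \ref{SM7} (which lives in $\C^{\Delta}$) but rather the $n$-th relative latching map of $f_m$, i.e.\ the map $X_{m,n} \coprod_{X_m \wedge \partial\Dn_+} Y_m \wedge \partial\Dn_+ \to Y_{m,n}$ appearing in the very definition of (acyclic) Reedy cofibrations, which makes the detour through \ref{SM7} unnecessary. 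If you do want to route through \ref{SM7}, you must additionally evaluate its cosimplicial pushout-product in degree $0$ (using that $ev_0$ is left Quillen and that colimits of cosimplicial objects are computed levelwise) to land back in $\C$; either way the conclusion follows.
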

 
 \begin{proof}
 We may assume that $g$ is one of the generating cofibrations $F_m \partial \Dn \rightarrow F_m \Dn$ using \cite[4.2.4]{Ho}. \\
 In this case, the induced map is the map 
 \[
 X_{m,n} \coprod_{X_m \wedge \partial \Dn_+} Y_m \wedge \partial \Dn_+ \rightarrow Y_{m,n}
 \]
 which is a cofibration by definition of the Reedy cofibrations between cosimplicial objects if $f$ is a cofibration. If $f$ is acyclic, it is also acyclic; see \cite[5.2.5]{Ho}.
 \end{proof}
 
 \begin{cor}
 \label{QQ}
 Let B be a cofibrant spectrum. Then the functor $- \wedge B: \C^\Delta(\Sigma) \rightarrow \C$ preserves cofibrations and acyclic cofibrations and hence has a left derived functor. 
 \end{cor}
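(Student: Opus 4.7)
The plan is to reduce this to Proposition \ref{compatible} in exactly the same way that Corollary \ref{QQS} was deduced from Proposition \ref{compatiblesimp}. First I would take $A = *$, the initial (zero) spectrum, and consider the map $g\colon * \to B$, which is a cofibration of spectra precisely because $B$ is cofibrant. The key observation is that $X \wedge * = *$ for any $\Sigma$-cospectrum $X$: this is immediate from the fact that $X \wedge -$ is a left adjoint and therefore preserves the initial object, which in the pointed setting coincides with the zero spectrum.

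With $A = *$ the pushout-product appearing in Proposition \ref{compatible} collapses:
\[
X \wedge B \coprod_{X \wedge A} Y \wedge A \;=\; X \wedge B \coprod_{*} * \;\cong\; X \wedge B,
\]
so $f \Box g$ is simply the map $f \wedge B \colon X \wedge B \to Y \wedge B$. Proposition \ref{compatible} then tells us directly that this is a cofibration in $\C$, and is acyclic whenever $f$ is an acyclic cofibration in $\C^{\Delta}(\Sigma)$. Hence $- \wedge B$ preserves both cofibrations and acyclic cofibrations.

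Finally, to conclude the existence of a left derived functor, I would invoke Ken Brown's lemma \cite[1.1.12]{Ho}: a functor between model categories that sends acyclic cofibrations between cofibrant objects to weak equivalences automatically preserves all weak equivalences between cofibrant objects, and so has a well-defined left derived functor on the homotopy category of cofibrant objects. There is no real obstacle here; the entire content is carrying the pushout-product axiom of Proposition \ref{compatible} down to the special case $A = *$, which is a formal manipulation.
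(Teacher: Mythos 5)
Your proof is correct and follows essentially the same route as the paper: setting $A = *$ in Proposition \ref{compatible} so that the pushout-product degenerates to $f \wedge B$, and then invoking Ken Brown's lemma to get the derived functor. The only difference is that you spell out the collapse of the pushout and the Ken Brown step explicitly, which the paper leaves implicit.
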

 
 \begin{proof}
 We set $A = *$ in \ref{compatible}. The pushout-product map is then just the map $f \wedge B: X \wedge B \rightarrow Y \wedge B$ which is an (acyclic) cofibration if $f$ is. Hence $- \wedge B$ preserves weak equivalences between cofibrant objects and has a left derived functor as claimed.
 \end{proof}
 
 \begin{cor}
 Let $X$, $Y$ be stable frames and $F, G: X \rightarrow Y$ two homotopic maps. Then $F$ and $G$ induce the same derived natural transformations between the derived functors of $X \wedge -$ and $Y \wedge -$. 
 \end{cor}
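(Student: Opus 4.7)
The plan is to imitate the argument already given for the simplicial case (the corollary following Corollary \ref{QQS}), now using the compatibility result Proposition \ref{compatible} and its consequence Corollary \ref{QQ} in place of their simplicial analogues. The basic idea is that passing to the homotopy category kills the difference between $F$ and $G$ already at the level of $\C^\Delta(\Sigma)$, and the derived smash with any cofibrant spectrum transports this equality into $\Ho(\C)$.

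More concretely, fix a cofibrant spectrum $B$. By Corollary \ref{QQ}, the functor $-\wedge B\colon\C^\Delta(\Sigma)\to\C$ sends cofibrations and acyclic cofibrations to cofibrations and acyclic cofibrations, hence in particular sends weak equivalences between cofibrant objects to weak equivalences (Ken Brown's lemma). It therefore descends to a functor $-\wedge^L B\colon \Ho(\C^\Delta(\Sigma))\to\Ho(\C)$, and we have a square of categories and functors
\[
\xymatrix{
\C^\Delta(\Sigma)^{cof}\ar[d]_{\gamma}\ar[rr]^{-\wedge B} & & \C^{cof}\ar[d]^{\phi}\\
\Ho(\C^\Delta(\Sigma))\ar[rr]^{-\wedge^L B} & & \Ho(\C)
}
\]
that commutes up to natural isomorphism, where $\gamma$ and $\phi$ denote the localization functors.

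Since $F$ and $G$ are homotopic in $\C^\Delta(\Sigma)$, we have $\gamma(F)=\gamma(G)$, and so $(-\wedge^L B)\circ\gamma$ sends $F$ and $G$ to the same morphism. By the (natural) commutativity of the square, this forces $\phi(F\wedge B)=\phi(G\wedge B)$; in other words, the two maps $F\wedge B,\,G\wedge B\colon X\wedge B\to Y\wedge B$ represent the same morphism in $\Ho(\C)$. As $B$ was an arbitrary cofibrant spectrum, the derived natural transformations $F\wedge^L(-)$ and $G\wedge^L(-)$ of $X\wedge^L-\Rightarrow Y\wedge^L-$ agree on every object of $\Ho(\Sp)$, and therefore coincide.

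The only step requiring any care is checking that $-\wedge B$ really does send cofibrant stable frames to cofibrant objects of $\C$ and preserves weak equivalences between them, so that the derived functor and the commutative square above make sense; but this is exactly the content of Corollary \ref{QQ}, which in turn follows from Proposition \ref{compatible}. Everything else is a purely formal consequence of the universal property of localization, so there is no real obstacle once Proposition \ref{compatible} is in hand.
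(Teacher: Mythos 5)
Your proposal is correct and follows essentially the same route as the paper: it invokes Corollary \ref{QQ} (via Proposition \ref{compatible}) to get the derived functor $-\wedge^L B$, forms the same square commuting up to natural isomorphism with the localization functors, and concludes from $\gamma(F)=\gamma(G)$ that $F\wedge B$ and $G\wedge B$ agree in $\Ho(\C)$ for every cofibrant $B$. No substantive differences to report.
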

 
\begin{proof}
Let $A$ be a cofibrant spectrum. We have two maps $F(A), G(A): X \wedge A \rightarrow Y \wedge A$. We claim that these two maps represent the same map in $\Ho(\C)$: Since $- \wedge A$ has a derived functor by the preceding corollary, we get a diagram of functors
\[
\xymatrix{
		\C^\Delta(\Sigma) \ar[d] \ar[rr]^{- \wedge A} & & \C \ar[d] \\
		\Ho(\C^\Delta(\Sigma) \ar[rr]^{- \wedge^L A} & & Ho(\C)
}
\]
which commutes up to a natural isomorphism.  We want to see that $F$ and $G$ go to the same map via the clockwise composition; but since the left vertical map sending them to their homotopy classes already  sends them to the same map, the clockwise composition, being isomorphic to the counterclockwise one, then has to send these two maps to the same map as well.  So for all cofibrant spectra $A$, $F(A), G(A): X \wedge A \rightarrow Y \wedge A$ represent the same map in $\Ho(\C)$, and this is enough to conclude that the derived natural transformations of $F$ and $G$ are equal.
\end{proof}

Unsurprisingly, weak equivalences between stable frames induce natural weak equivalences:

\begin{prop}
Let $f: X \rightarrow Y$ be a weak equivalence of stable frames. Then the derived natural transformation $f \wedge -: X \wedge - \rightarrow Y \wedge -$ is a natural weak equivalence, i.e., a weak equivalence for all cofibrant spectra $A$. 
\end{prop}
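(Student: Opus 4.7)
The plan is to deduce this directly from the compatibility already established in Corollary \ref{QQ}, together with Ken Brown's lemma.

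First I would observe that every stable frame is cofibrant in $\C^{\Delta}(\Sigma)$. By the level model structure of Theorem \ref{cospectra}, a cospectrum $Z$ is cofibrant precisely when each $Z_n$ is cofibrant in the Reedy model structure on $\C^{\Delta}$, i.e., Reedy cofibrant as a cosimplicial object. This is built into the definition of a stable frame (each $X_n$ is a cosimplicial frame, hence in particular Reedy cofibrant). Thus both $X$ and $Y$ are cofibrant objects of $\C^{\Delta}(\Sigma)$, and $f: X \to Y$ is a weak equivalence between cofibrant objects in this category.

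Next, fix any cofibrant spectrum $A$. Corollary \ref{QQ} asserts that the functor $- \wedge A: \C^{\Delta}(\Sigma) \to \C$ preserves cofibrations and acyclic cofibrations, so by Ken Brown's lemma \cite[1.1.12]{Ho} it takes weak equivalences between cofibrant objects to weak equivalences in $\C$. Applied to $f$, this yields that $f \wedge A: X \wedge A \to Y \wedge A$ is a weak equivalence in $\C$. Since $A$ is cofibrant, the strict functor $- \wedge A$ agrees at this object (up to the canonical natural isomorphism) with its left derived functor, so the derived natural transformation $f \wedge - $ evaluated at $A$ is a weak equivalence. As $A$ was an arbitrary cofibrant spectrum, this is precisely the conclusion.

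I do not expect any real obstacle; all the serious work was done in Proposition \ref{compatible} and its immediate consequence Corollary \ref{QQ}, which encode the compatibility between the three model structures on $\Sp$, $\C^{\Delta}(\Sigma)$, and $\C$. The only point that needs to be unwound carefully is the identification of stable frames with cofibrant objects in $\C^{\Delta}(\Sigma)$, but this is straightforward from the two layers of definitions (the level model structure on cospectra, and the Reedy model structure on $\C^{\Delta}$ under which the frames are by definition cofibrant).
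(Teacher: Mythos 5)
Your argument is correct, but it takes a genuinely different route from the paper. You stay entirely on the left-adjoint side: you observe that stable frames are cofibrant in $\C^{\Delta}(\Sigma)$ (cofibrations of cospectra are levelwise, and each level of a stable frame is a cosimplicial frame, hence Reedy cofibrant), and then apply Corollary \ref{QQ} together with Ken Brown's lemma to conclude that $- \wedge A$ sends the weak equivalence $f$ between cofibrant objects to a weak equivalence in $\C$; with the paper's convention that $\Ho$ consists of cofibrant objects, this is literally the value of the derived natural transformation at $A$. The paper instead passes to the right adjoints via \cite[1.3.18]{Ho} and checks that $\Map(f,Z)\colon \Map(Y,Z) \rightarrow \Map(X,Z)$ is a $\pi_*$-isomorphism for fibrant $Z$, using that $\Map(-,Z)$ preserves weak equivalences between frames (established in the proof of Theorem \ref{genstableframings}) and that the resulting spectra are $\Omega$-spectra, so level equivalences coincide with $\pi_*$-isomorphisms. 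Your version is shorter and reuses machinery proved immediately beforehand; the paper's adjoint-side argument has the advantage that it also detects the converse (as carried out in the simplicial analogue in Section~4, where a natural weak equivalence of functors is shown to force $f$ to be a weak equivalence), since the levelwise mapping-space description lets one read off the behaviour of each $f_n$.
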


\begin{proof}
By \cite[1.3.18]{Ho}, we may as well check the corresponding statement for the right adjoints, i.e., that for a fibrant object $Z$ of $\C$, the map 

\[
\Map(f, Z): \Map(Y, Z) \rightarrow \Map(X,Z)
\]
is a $\pi_*$-isomorphism. We saw above in the proof of Theorem \ref{genstableframings} that the functor 
\[
\Map(-, Z): (\C^{\Delta})^{op} \rightarrow \SSet_*
\]
preserves weak equivalences between frames if $Z$ is fibrant. Since $\Map(Y, Z)$ and $\Map(X,Z)$ are levelwise of the form $\Map(Y_n, Z)$ and $\Map(X_n, Z)$ for frames $X_n$ and $Y_n$ and $f$ is a levelwise weak equivalence, the map
\[
\Map(f, Z): \Map(Y, Z) \rightarrow \Map(X,Z)
\]
is a level weak equivalence. This is what we wanted to prove. Note that $\Map(Y, Z)$ and $\Map(X,Z)$ are $\Omega$-spectra, hence the notions of level weak equivalence and $\pi_*$-isomorphism agree. 
\end{proof}

\begin{definition}
For a stable model category $\C$, let $SF(\C)$ denote the full subcategory of $\C^\Delta(\Sigma)$ given by all stable frames and $\Ho(SF(\C))$ the full subcategory of the homotopy category of $\C^{\Delta}(\Sigma)$ given by stable frames.
\end{definition}

Now we have carried together enough information to prove the stable analogue of  Theorem \ref{homotopyframes}.

\begin{thm}
\label{MAIN}
Let $X$ be a cofibrant object of $\C$, $Y$ a cofibrant-fibrant object; let $\omega X$ be a stable frame on X and $\omega Y$ a fibrant stable frame on Y.  \\
a) Any map $f: X \rightarrow Y$ extends (nonuniquely) to a map $F: \omega X \rightarrow \omega Y$ and hence to a natural transformation $\omega X \wedge - \rightarrow \omega Y \wedge -$ covering $f$ on the sphere spectrum. \\
b) Let $f': X \rightarrow Y$ be homotopic to f. Then any F and F' constructed from f and f' as in a) are homotopic and hence induce the same derived natural transformation between the derived functors of $\omega X \wedge - $ and $\omega Y \wedge -$. \\
c) If f is a weak equivalence, any map $\omega X \rightarrow \omega Y$ as in a) is a natural weak equivalence. \\
d) Evaluation in degree ${(0,0)}$ induces an equivalence of categories
\[
ev_{(0,0)}: \Ho(SF(\C)) \stackrel{\cong}{\rightarrow} \Ho(\C) 
\]
from the homotopy category $\Ho(SF(\C))$ of stable frames in $\C$ to $\Ho(\C)$.
\end{thm}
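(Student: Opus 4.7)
The plan is to establish parts (a)--(c) by induction on the cospectrum level, exploiting that fibrancy of $\omega Y$ in $\C^\Delta(\Sigma)$ means each level is Reedy fibrant and each adjoint structure map $\omega Y_n \to \Omega\omega Y_{n-1}$ is an acyclic Reedy fibration, and then to deduce (d) formally. For (a), at spectrum level $0$ I would invoke Proposition \ref{framescover} to obtain a map $F_0\colon \omega X_0 \to \omega Y_0$ of cosimplicial frames covering $f$. Inductively, supposing $F_0,\dots,F_{n-1}$ have been constructed compatibly with the structure maps, the composite $\omega X_n \to \Omega\omega X_{n-1} \xrightarrow{\Omega F_{n-1}} \Omega\omega Y_{n-1}$ must be lifted through the acyclic Reedy fibration $\omega Y_n \to \Omega\omega Y_{n-1}$; since $\omega X_n$ is Reedy cofibrant (being a cosimplicial frame), the lift $F_n$ exists and is strictly compatible with the cospectrum structure maps by construction. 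The corresponding natural transformation $\omega X \wedge - \to \omega Y \wedge -$ then arises from the cospectrum-to-Quillen-pair correspondence of Theorems \ref{mainthm} and \ref{genstableframings}.

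For (b), I would pick a cylinder object for $X$ in $\C$ realizing the homotopy $f \simeq f'$ and extend it to a map $H\colon \omega X \wedge_S \Delta[1]_+ \to \omega Y$ by repeating the inductive lifting argument of (a), using that $\omega X \wedge_S \Delta[1]_+$ remains levelwise Reedy cofibrant by Proposition \ref{SM7}. This $H$ witnesses $F \simeq F'$ in $\C^\Delta(\Sigma)$, and equality of the derived natural transformations then comes from the preceding corollary that homotopic maps of stable frames induce equal derived natural transformations. For (c), I argue inductively that each $F_n$ is a weak equivalence: $F_0$ is a map of cosimplicial frames whose cosimplicial degree-$0$ restriction is a weak equivalence, and since both source and target cosimplicial objects are homotopically constant, $F_0$ is then a cosimplicial-levelwise weak equivalence; given that $F_{n-1}$ is a weak equivalence, the commuting square
\[
\xymatrix{
\Sigma\omega X_n \ar[r] \ar[d]_{\Sigma F_n} & \omega X_{n-1} \ar[d]^{F_{n-1}} \\
\Sigma\omega Y_n \ar[r] & \omega Y_{n-1}
}
\]
has horizontal weak equivalences by the stable-frame condition, so $\Sigma F_n$ is a weak equivalence by $2$-out-of-$3$; stability of $\C$ together with the equivalence $\Sigma\colon \Ho(Fr(\C)) \to \Ho(Fr(\C))$ from Theorem \ref{homotopyframes} then forces $F_n$ itself to be a weak equivalence.

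Part (d) is essentially formal. Essential surjectivity of $ev_{(0,0)}$ is Theorem \ref{existence}, combined with fibrant replacement in $\C$ to represent arbitrary objects of $\Ho(\C)$ by cofibrant-fibrant ones. Fullness is (a), modulo fibrant replacement in $\C^\Delta(\Sigma)$ to arrange that the target frame is fibrant. For faithfulness: if $F, G\colon \omega X \to \omega Y$ in $\Ho(SF(\C))$ satisfy $ev_{(0,0)}(F) = ev_{(0,0)}(G)$ in $\Ho(\C)$, choose representatives whose level-$(0,0)$ parts are honestly homotopic in $\C$; part (b) then yields a homotopy between the chosen representatives in $\C^\Delta(\Sigma)$, so $F = G$ in $\Ho(SF(\C))$. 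The main obstacle is the inductive step of (a): one has to secure \emph{strict} compatibility with the cospectrum structure maps at each stage rather than merely homotopy coherence, and this is precisely why the fibrancy hypothesis on $\omega Y$ — converting the structure maps into acyclic Reedy fibrations — is indispensable rather than a matter of convenience.
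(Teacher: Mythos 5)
Your parts (c) and (d) follow the paper's argument, but the foundation of your parts (a) and (b) rests on a false claim: fibrancy of $\omega Y$ in $\C^{\Delta}(\Sigma)$ makes the adjoint structure maps $\omega Y_n \rightarrow \Omega \omega Y_{n-1}$ into Reedy \emph{fibrations}, not acyclic Reedy fibrations. They cannot be Reedy weak equivalences: $\omega Y_n$ is homotopically constant, while $\Omega \omega Y_{n-1} = (\omega Y_{n-1})^{S^1}$ is essentially never homotopically constant --- this is exactly the failure discussed in the paper right before the definition of realization weak equivalences ($\Sigma$ on $\C^{\Delta}$ is not a Quillen equivalence even when $\C$ is stable). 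Consequently your inductive step, which lifts against $\omega Y_n \rightarrow \Omega\omega Y_{n-1}$ using only Reedy cofibrancy of $\omega X_n$, does not go through. The paper's actual argument is that this map is a Reedy fibration \emph{and a realization weak equivalence} (by Proposition \ref{stability}, which is where stability of $\C$ enters --- note your version of (a) never uses stability, which is a warning sign), and that such maps have the right lifting property with respect to cosimplicial frames (Proposition \ref{lifting}). That machinery is not a convenience; it is the substitute for the acyclicity you assumed.

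The same problem propagates into your part (b), with an additional wrinkle: to extend a homotopy you need a \emph{relative} lifting property against the cofibration $\omega X_n \wedge_S \partial\Delta[1]_+ \rightarrow \omega X_n \wedge_S \Delta[1]_+$, whereas Proposition \ref{lifting} only provides absolute lifts of maps out of frames (it is not part of a model structure in which frames are the cofibrant objects, as the paper points out). The paper sidesteps this entirely by a different route: it shows that the restriction map $\Map(\omega X, \omega Y) \rightarrow \Map(\omega^0 X, \omega^0 Y)$ is an acyclic fibration of simplicial sets, via a tower of pullback squares and a loop-space argument to upgrade a $\pi_*$-isomorphism on basepoint components to all components; the statement about $\pi_0$ of this mapping space then gives (b). You would need either to prove a relative version of Proposition \ref{lifting} or to adopt the mapping-space argument. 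Parts (c) and (d) as you wrote them are essentially the paper's proofs and are fine once (a) and (b) are repaired.
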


\begin{proof}
For a), we first extend $f$ to a map $F_0: \omega^0 X \rightarrow \omega^0 Y$: Since $\omega^0 Y$ is Reedy fibrant and homotopically constant, the map $\omega^0 Y \rightarrow cY$ is an acyclic fibration. We also have a map $\omega^0 X \rightarrow cY$ adjoint to $f$, and this map lifts to a map $F_0: \omega^0 X \rightarrow \omega^0 Y$ since $\omega^0 X$ is cofibrant and $\omega^0 Y \rightarrow cY$ is an acyclic fibration. \\
Now we want to produce a map $F_1: \omega^1 X \rightarrow \omega^1 Y$ extending $F_0$ to a map of cospectra up to degree $1$ which is nothing else but a  lift in the diagram
\[
\xymatrix{	
		&  & \omega^1 Y \ar[d] \\		
		\omega^1 X \ar[r] & \Omega \omega^0 X \ar[r]^{F_0} & \Omega \omega^0 Y 
}
\]
 where the maps $\omega^1 X \rightarrow \Omega \omega^0 X$ and $\omega^1 Y \rightarrow \Omega \omega^0 Y$ are the structure maps. Since $\omega Y$ is fibrant, the map on the right is a realization weak equivalence by Proposition \ref{stability} and a Reedy fibration; hence we find a lift in this diagram by Proposition \ref{lifting}. Proceeding like this, we find maps $F_n: \omega^n X \rightarrow \omega^n Y$ which form a morphism of cospectra and cover $f$. This proves a). \\
 For b), it is by the preceding corollary enough to see that the homotopy type of a map $F: \omega X \rightarrow \omega Y$ is determined by the homotopy type of the restriction of $F$ to $f: X \rightarrow Y$. By \cite[5.5.2]{Ho} or Theorem \ref{homotopyframes}, it suffices to see that the homotopy type of $F$ is determined by the homotopy type of $F_0: \omega^0 X \rightarrow \omega^0 Y$ since the homotopy type of $F_0$ is determined by $f$. \\
Let $ev_0: \C^\Delta(\Sigma) \rightarrow \C^{\Delta}$ denote the left Quillen functor given by evaluation in degree $0$. We get an induced map of the simplicial mapping spaces $\Map(\omega X, \omega Y) \rightarrow \Map(\omega^0 X, \omega^0 Y)$. On $\pi_0$, this map induces $[\omega X, \omega Y] \rightarrow [\omega^0 X, \omega^0 Y]$. The maps $F$ and $F'$ go to the same element in $[\omega^0 X, \omega^0 Y]$ by assumption; hence we are finished if we can see that the map $\Map(\omega X, \omega Y) \rightarrow \Map(\omega^0 X, \omega^0 Y)$ is a homotopy equivalence (and thus induces an isomorphism on $\pi_0$). \\
Let $\omega^{\leq n} X$ resp. $\omega^{\leq n} Y$ denote the partial cosimplicial  cospectrum obtained by only taking the first $n+1$ objects of $\omega X$ resp. $\omega Y$. We get a pullback square as follows, where the unnamed maps are induced from the structure maps of $\omega X$ and $\omega Y$ and the mapping spaces are those of $\C^{\Delta}$:
 \[
 \xymatrix{
 		\Map(\omega^{\leq 1} X, \omega^{\leq 1} Y) \ar[d]_{ev_0} \ar[rr]^{ev_1}  & & \Map(\omega^1 X, 		\omega^1 Y) \ar[d] \\
		\Map(\omega^{0}X, \omega^{0}Y) \ar[r]_-{\Omega}  & \Map(\Omega \omega^0 X, \Omega \omega^0 Y) \ar[r] & \Map(\omega^1 X, \Omega \omega^0 Y) \\
 }
 \]

The map on the right is a fibration since the map $\omega^1 Y \rightarrow \Omega \omega^0 Y$ is a fibration and $\Map(\omega^1 X, -)$ is right Quillen; note that this does not follow from \ref{SM7}, but requires an argument that $\omega^1 X \wedge_S -$ preserves acyclic fibrations; compare \cite[5.4.3]{Ho}. By \cite[6.1.2]{Ho}, we have that the induced map $\pi_n \Map(\omega^1 X,  \omega^1 Y) \rightarrow \pi_n \Map(\omega^1 X, \Omega \omega^0 Y)$ is just the map $[\Sigma^n \omega^1 X, \omega^1 Y] \rightarrow [\Sigma^n \omega^1 X, \Omega \omega^0 Y]$  induced by the structure map of $Y$, which is an isomorphism since $\Sigma^n \omega^1 X$ is a frame and $\omega^1 Y \rightarrow \Omega \omega^0 Y$ is a realization weak equivalence. So the map on the right induces an isomorphism on all homotopy groups with basepoint the zero map. To see that it is in fact a $\pi_*$-isomorphism, we have to extend this to all basepoints.  \\
 Since we can find a frame $Z$ with $\Sigma Z \simeq \omega^1 X$, we see that $\Map(\omega^1 X, \omega^1 Y) \simeq \Map(\Sigma Z, \omega^1 Y) \simeq \Omega \Map(Z, \omega^1 Y)$ is a loopspace up to weak equivalence; and similarly $\Map(\omega^1 X, \Omega \omega^0 Y) \simeq \Omega \Map(Z, \Omega \omega^0 Y)$ is a loopspace, and the induced map between the two spaces is up to homotopy  $\Omega$ of the map $\Map(Z, \omega^1 Y) \rightarrow \Map(Z, \Omega \omega^0 Y)$. But in a loopspace, all components are weakly equivalent in a way respected by loop maps, hence we can conclude that $\Map(\omega^1 X, \omega^1 Y) \rightarrow \Map(\omega^1X, \Omega \omega^0 Y)$ is a $\pi_*$-isomorphism and hence an acyclic fibration.  Then the pullback map $\Map(\omega^{\leq 1} X, \omega^{\leq 1} Y) \rightarrow \Map(\omega^0 X, \omega^0 Y)$ is an acyclic fibration as well. \\ 
 Now consider for any $n$ the square
 \[
  \xymatrix{
 		\Map(\omega^{\leq n} X, \omega^{\leq n} Y) \ar[d] \ar[rr]^{ev_n} &  & \Map(\omega^{n} X, 		\omega^{n} Y) \ar[d] \\	
		\Map(\omega^{\leq n-1}X, \omega^{\leq n-1}Y) \ar[r]_{ev_{n-1}} & \Map( \omega^{n-1} X, \omega^{n-1} Y) \ar[r] &  \Map(\omega^{n} X, \Omega \omega^{n-1} Y) \\
 }
 \]
 Again, this is a pullback square and the map on the right is an acyclic fibration, so the map on the left also is. Hence, for any $n$, the map $\Map(\omega^{\leq n} X, \omega^{\leq n}Y) \rightarrow \Map(\omega^{\leq n-1} X, \omega^{\leq n-1}Y)$ forgetting the degree $n$-part is an acyclic fibration. Furthermore, we have $\lim_{n} \Map(\omega^{\leq n} X, \omega^{\leq n}Y)  = \Map(\omega X, \omega Y)$. thus the map $\Map(\omega X, \omega Y) \rightarrow \Map(\omega^0 X, \omega^0 Y)$ is also an acyclic fibration as a limit of acyclic fibrations, proving our claim. \\
 For c), first note that $F_0$ is a weak equivalence since it is a map between homotopically constant cosimplicial objects covering the weak equivalence $f$ in degree $0$. Now we look at the commutative diagram
 \[
 \xymatrix{
 \Sigma \omega X_1 \ar[d] \ar[rr]^{\Sigma F_1} & & \Sigma \omega Y_1 \ar[d] \\
 \omega X_0 \ar[rr]_{F_0} & & \omega Y_0
 }
\]
 The two vertical maps and $F_0$ are weak equivalences, so $\Sigma F_1$ also is. By Theorem \ref{homotopyframes} and since $\omega X_1$ and $\omega Y_1$ are frames, this means $F_1$ is a weak equivalence. By iterating this argument, we find that $F$ is a weak equivalence, which induces a natural weak equivalence between the functors $\omega X \wedge -$ and $\omega Y \wedge -$. \\
 For d), first note that $ev_{(0,0)}$ indeed induces a functor $ev: \Ho(SF(\C)) \rightarrow \Ho(\C)$ since $ev_{(0,0)}: \C^{\Delta}(\Sigma) \rightarrow \C$ is just the functor $- \wedge \U$ which has a derived functor by Corollary \ref{QQ}. 
 Since one can build a stable frame on any cofibrant-fibrant object of $\C$ and every object of $\Ho(\C)$ is isomorphic to such an object, we get that $ev$ is surjective on isomorphism classes of objects. For injectivity, let $X$ and $Y$ be two stable frames such that $ev(X)$ and $ev(Y)$ are isomorphic. We may assume that $X$ and $Y$ are fibrant; then the isomorphism $ev(X) \cong ev(Y)$ is represented by a weak equivalence $f: ev_{(0,0)}(X) \rightarrow ev_{(0,0)} (Y)$. By a), $f$ extends to a map $X \rightarrow Y$, and by c), this map is a weak equivalence. Hence $X$ and $Y$ are isomorphic in the homotopy category of frames. \\
That $ev$ is full is just a reformulation of part a). Given a morphism $g: A \rightarrow B$ in $\Ho(\C)$, we may assume that $A$ and $B$ are cofibrant-fibrant and we obtain an actual morphism $f: A \rightarrow B$ in $\C$. Let $X$ be a stable frame on $A$, $Y$ a fibrant stable frame on $X$. By a), $f$ extends to a map $F: X \rightarrow Y$ and $ev_{(0,0)}(F) = f$, hence $ev(F) = g$. \\
Finally, that $ev$ is faithful follows directly from part c) since two maps of stable frames which are homotopic in degree $(0,0)$ are homotopic.
\end{proof}

\begin{cor}
\label{uniquenessgen}
Let $L$, $L'$ be two left Quillen functors $\Sp \rightarrow \C$ with a weak equivalence $L(\U) \simeq L'(\U)$. Then there is a canonical isomorphism between the derived functors of $L$ and $L'$ covering the given isomorphism on the sphere spectrum.
\end{cor}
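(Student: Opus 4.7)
The plan is to reduce everything to Theorem \ref{MAIN}(d) via the classification of Quillen functors by stable frames. First I would convert the hypothesis into frame-theoretic data: by Theorems \ref{mainthm} and \ref{genstableframings}, the left Quillen functor $L$ corresponds to a stable frame $X$ in $\C$ with $L \cong X \wedge -$ and $X_{0,0} = L(\U)$, and likewise $L'$ corresponds to a stable frame $Y$ with $Y_{0,0} = L'(\U)$. The given weak equivalence $L(\U) \simeq L'(\U)$ then becomes an isomorphism $X_{0,0} \cong Y_{0,0}$ in $\Ho(\C)$.

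The main step is to invoke Theorem \ref{MAIN}(d): evaluation $ev_{(0,0)}: \Ho(SF(\C)) \to \Ho(\C)$ is an equivalence of categories, and in particular fully faithful. Consequently, the given isomorphism $X_{0,0} \cong Y_{0,0}$ has a unique preimage $\tilde\phi: X \to Y$ in $\Ho(SF(\C))$, and this preimage is an isomorphism (since $ev$ reflects isomorphisms). If one prefers a more hands-on construction instead of quoting part d) as a black box, one can first replace $Y$ by a fibrant stable frame on a cofibrant-fibrant object (using Theorem \ref{existence}), so that the hypotheses of parts a)--c) are met, represent the homotopy-class $L(\U) \simeq L'(\U)$ by an actual weak equivalence $f$ after suitable (co)fibrant replacement, extend $f$ to a map of stable frames $F: X \to Y$ by a), observe that $F$ is a weak equivalence by c), and note that $F$ is unique up to homotopy by b).

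Finally I would translate the result back. Under the identification of $\Ho(SF(\C))$ with the localization of left Quillen functors $\Sp \to \C$ at natural weak equivalences (which is the content of the discussion preceding Theorem \ref{MAIN}, together with the Corollary before \ref{MAIN}), the isomorphism $\tilde\phi$ corresponds precisely to a natural isomorphism between the derived functors of $L$ and $L'$; by construction, at the sphere spectrum $\U$ it recovers the given weak equivalence $L(\U) \simeq L'(\U)$. Canonicity is automatic from the fully faithful half of Theorem \ref{MAIN}(d): any two natural isomorphisms covering the same map in degree $(0,0)$ coincide.

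There is no substantive obstacle here, since the real work has already been done in Theorem \ref{MAIN}; the only point requiring a bit of care is matching up the various fibrant/cofibrant replacements so that the isomorphism produced in $\Ho(SF(\C))$ is genuinely between the frames representing $L$ and $L'$ (rather than between some replacements), which is harmless because all such choices are connected by natural weak equivalences of stable frames and hence, by Theorem \ref{MAIN}(c), induce the identity on the level of derived functors.
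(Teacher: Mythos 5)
Your proposal is correct and follows essentially the same route as the paper: the paper's own proof takes the stable frames $K$, $K'$ associated to $L$, $L'$, fibrantly replaces $K'$ to get $Z$, extends the composite weak equivalence $L(\U)\to L'(\U)\to Z\wedge\U$ to a natural weak equivalence $K\to Z$ via Theorem \ref{MAIN}, and combines it with the replacement map $K'\to Z$ — exactly your ``hands-on'' variant. Your packaging of this through the full faithfulness of $ev_{(0,0)}$ in Theorem \ref{MAIN}(d) is just a restatement of the same content, and your remark on canonicity via faithfulness is a welcome explicit touch.
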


\begin{proof}
Let $K$ and $K'$ be the stable frames associated to $L$ and $L'$. By replacing $K'$ fibrantly, we get a fibrant stable frame $Z$ which is weakly equivalent to $K'$, and we have a weak equivalence $L(\U) \rightarrow L'(\U) \rightarrow Z \wedge \U$. By the preceding theorem, this weak equivalence extends to a natural weak equivalence $K \rightarrow Z$ whose derived natural transformation is a canonical isomorphism, and we also have a natural weak equivalence $K' \rightarrow Z$ obtained from the fibrant replacement. These two natural weak equivalences yield the desired isomorphism.
\end{proof}

\begin{rem}
We may consider stable frames and the category $\Ho(SF(\C))$ for any model category, though there may be not so many stable frames: For example, in topological spaces, no non-contractible object is an infinite suspension, so we can only build stable frames on contractible objects and consequently, $\Ho(SF(\Top))$ is equivalent to the trivial category. A model category is stable if and only if evaluation at the sphere spectrum induces an equivalence of categories $\Ho(SF(\C)) \rightarrow \Ho(\C)$.
\end{rem}

\begin{rem}
\label{othermodels}
The "universal properties" of $\Sp$ formulated in \ref{existence} and \ref{MAIN} certainly depend on the concrete structure of $\Sp$, and not any model of $\SHC$ will share these properties, though conversely $\Sp$ is determined up to Quillen equivalence by them. It is not, however, determined up to equivalence; we could have built a category of $S^2$-spectra in the sense of Chapter 5 which carry a stable model structure, and this category of spectra shares the universal properties - just use $\Sigma^2$-cospectra, and all proofs in this section go through virtually unchanged. In fact, one might take a sequence $a_0, a_1, \dots$ of positive natural numbers and define a category of spectra with structure maps $X_n \wedge S^{a_n} \rightarrow X_{n+1}$ which carries a level model structure, then localize at the maps $F_n S^{a_{n-1}} \rightarrow F_{n-1} S^0$ to obtain a stable model category which will also fulfill \ref{existence} and \ref{MAIN}.
\end{rem}

\section{Enrichments}

We adopt the definitions of (closed) modules over a monoidal category from \cite[4.1]{Ho}: this is how the homotopy category of a stable model category will be enriched over $\SHC$.

Now we construct our enrichment functor (or, rather, module functor). Let $\C$ be a stable model category. We have a functor
\[
- \wedge -: SF(\C) \times \Sp \rightarrow \C
\]

\begin{lem}
This functor has a derived functor $\Phi: \Ho(SF(\C)) \times \SHC \rightarrow \Ho(\C)$
\end{lem}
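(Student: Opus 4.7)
The plan is to verify that the functor $- \wedge -$ sends componentwise weak equivalences between ``cofibrant'' pairs to weak equivalences in $\C$, and then descend to the homotopy categories in the standard way, cofibrantly replacing in the spectrum variable only. The first observation is that every stable frame $X \in SF(\C)$ is already cofibrant in $\C^{\Delta}(\Sigma)$: by Theorem \ref{cospectra}, a cospectrum is cofibrant exactly when each of its levels is cofibrant, and each level of a stable frame is a cosimplicial frame, hence Reedy cofibrant. So no cofibrant replacement is needed in the first variable.

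The two ``partial'' preservation properties are already at hand. For any cofibrant spectrum $A$, Corollary \ref{QQ} shows that $-\wedge A : \C^{\Delta}(\Sigma)\to \C$ preserves cofibrations and acyclic cofibrations, so by Ken Brown's lemma it preserves weak equivalences between cofibrant objects -- in particular, between stable frames. For any stable frame $X$, Theorem \ref{genstableframings} says $X\wedge -:\Sp\to\C$ is left Quillen and hence preserves weak equivalences between cofibrant spectra. Combining these, given a componentwise weak equivalence $(f,g):(X,A)\to(Y,B)$ with $X,Y\in SF(\C)$ and $A,B$ cofibrant spectra, the composite
\[
X\wedge A \xrightarrow{f\wedge A} Y\wedge A \xrightarrow{Y\wedge g} Y\wedge B
\]
consists of two weak equivalences.

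From here $\Phi$ is constructed by the standard recipe. Choose a cofibrant replacement $q_A:QA\to A$ for each spectrum $A$, and set $\Phi(X,A)=X\wedge QA$. A morphism $(\alpha,\beta):(X,A)\to(Y,B)$ in $\Ho(SF(\C))\times\SHC$ is represented by a genuine morphism of stable frames into a fibrant replacement of $Y$ (using Theorem \ref{MAIN}) together with a lift $\tilde{\beta}:QA\to QB$ of $\beta$ (which exists since $QA$ is cofibrant and $q_B$ is a weak equivalence), and we set $\Phi(\alpha,\beta)=[\,\tilde{\alpha}\wedge \tilde{\beta}\,]$. The joint preservation property above implies that the result is well-defined up to isomorphism in $\Ho(\C)$ and respects composition.

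The main obstacle is really already dispatched by Proposition \ref{compatible} and its corollaries; what remains is the essentially formal verification that homotopic choices of lifts produce equal morphisms in $\Ho(\C)$. This reduces to the fact (noted after Corollary \ref{QQ}) that $-\wedge A$ sends homotopies of stable frames to homotopies in $\C$ when $A$ is cofibrant -- applied after lifting the cylinder homotopy into $\C^{\Delta}(\Sigma)$ -- together with the symmetric statement for $X\wedge -$ obtained from Theorem \ref{genstableframings}. Everything else is an exercise in derived-functor bookkeeping.
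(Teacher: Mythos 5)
Your proof is correct and follows essentially the same route as the paper: both factor the map as $X\wedge A \xrightarrow{f\wedge A} Y\wedge A \xrightarrow{Y\wedge g} Y\wedge B$ and check each arrow is a weak equivalence, the second because $Y\wedge -$ is left Quillen. The only cosmetic difference is that for the first arrow you invoke Corollary \ref{QQ} together with Ken Brown's lemma (after observing that stable frames are cofibrant in $\C^{\Delta}(\Sigma)$), whereas the paper cites its proposition that weak equivalences of stable frames induce natural weak equivalences; both are already established, and your extra bookkeeping on cofibrant replacements is unnecessary under the paper's convention that homotopy categories consist of cofibrant objects, but harmless.
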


Note that there is no claim that $- \wedge -$ is a Quillen bifunctor; we just want an induced functor on the homotopy categories.

\begin{proof}
Let $f: X \rightarrow Y$ be a weak equivalence of stable frames and $g: A \rightarrow B$ a weak equivalence between cofibrant spectra. We have to check that the map $f \wedge g: X \wedge A \rightarrow Y \wedge A \rightarrow Y \wedge B$ is a weak equivalence. But since $Y \wedge -$ is left Quillen, it preserves weak equivalences between cofibrant objects, so the second map is a weak equivalence; and since $f$ induces a natural weak equivalence and $A$ is cofibrant, the first map also is a weak equivalence.
Hence the composite functor
\[
\xymatrix{
SF(\C) \times \Sp \ar[r] & \C \ar[r] & \Ho(\C)
}
\]
sends weak equivalences between cofibrant objects to isomorphisms. So we have a derived functor $\Phi: \Ho(SF(\C)) \times \SHC \rightarrow \Ho(\C)$
\end{proof}

Now choose an inverse $\omega$ to the equivalence $ev_{\U}: \Ho(SF(\C)) \rightarrow \Ho(\C)$. In the terminology of \cite{Ho}, $\omega$ is what one might call a \emph{stable framing} for $\C$; the choice of $\omega$ boils down to choosing, for each cofibrant object $A$ of $\C$, a fibrant stable frame $\omega A$ with a weak equivalence $A \rightarrow \omega A \wedge \U \cong (\omega A)_{0,0}$. The following definition depends on the choice of $\omega$, but in no essential way. \\
Now we can define the enrichment functor:

\begin{definition}
The enrichment functor
\[
\otimes: \Ho(\C) \times \SHC \rightarrow \Ho(\C)
\]
is given as the composition
\[
\xymatrix{
\Ho(\C) \times \SHC \ar[rr]^-{\omega \times Id} & & \Ho(SF(\C)) \times \SHC \ar[rr]^-{\Phi} & & \Ho(\C) 
}
\]
\end{definition}

Having an object $X$ of $\C$ and a cofibrant spectrum $A$, we thus obtain $X \otimes A$ by taking the fibrant stable frame $\omega X$ and computing $\omega X \wedge A$ which is a model for $X \otimes A$. Maps of spectra are just plugged into the functor $\omega X \wedge -$, and a map $X \rightarrow Y$ in $\C$ can be extended to a map $\omega X \rightarrow \omega Y$ which gives a map $\omega X \wedge A \rightarrow \omega Y \wedge A$. Also note that for a map $A \rightarrow B$ of spectra, the diagram
\[
\xymatrix{
\omega X \wedge A \ar[d] \ar[rr] & & \omega X \wedge B \ar[d] \\
\omega Y \wedge B \ar[rr] & & \omega Y \wedge B 
}
\]
is commutative since $\omega X \rightarrow \omega Y$ induces a natural transformation; hence our description really gives rise to a functor. \\

\begin{thm}
For $\C = \Sp$, the functor $\otimes: \SHC \times \SHC \rightarrow \SHC$ makes $\SHC$ into a monoidal category with unit $\U$. For arbitrary $\C$, $\otimes: \Ho(\C) \times \SHC \rightarrow \Ho(\C)$ makes $\Ho(\C)$ into a closed $\SHC$-module with respect to the monoidal structure on $\SHC$ given by $\otimes$. A left Qullen functor $\C \rightarrow \D$ between stable model categories induces an $\SHC$-module functor $\Ho(\C) \rightarrow \Ho(\D)$.
\end{thm}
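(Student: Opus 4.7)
The plan is to reduce every coherence statement to the uniqueness encoded in Theorem \ref{MAIN} (equivalently Corollary \ref{uniquenessgen}): two derived left Quillen functors $\Sp \to \E$ into a stable model category that agree on the sphere spectrum $\U$ are canonically isomorphic, and two natural transformations between such derived functors agreeing on $\U$ are equal. Combined with the strict associativity of composition of derived left Quillen functors (guaranteed by our convention that $\Ho(\C)$ consists of cofibrant objects), this automatically forces coherent associators and unitors.

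I would begin with the unit. By construction of $\omega$, for any cofibrant $X$ one has a weak equivalence $X \simeq (\omega X)_{0,0} = \omega X \wedge \U$, the last identification coming from the adjunction $(F_0, Ev_0)$ applied to $\U = F_0 S^0$. This supplies a natural isomorphism $X \otimes \U \cong X$ in $\Ho(\C)$. Next, for associativity in the case $\C = \Sp$, fix cofibrant $A, B \in \SHC$ and compare, as functors of $C$,
\[
(A \otimes B) \otimes C = \omega(A \otimes B) \wedge C \qquad \text{and} \qquad A \otimes (B \otimes C) = \omega A \wedge (\omega B \wedge C).
\]
Both are derived from left Quillen functors $\Sp \to \Sp$: the right one as the composite of the left Quillen functors $\omega B \wedge -$ and $\omega A \wedge -$. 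Evaluating at $\U$ yields $A \otimes B$ on both sides (using $\omega B \wedge \U \cong B$ and then $\omega A \wedge B = A \otimes B$). Corollary \ref{uniquenessgen} then produces a canonical isomorphism extending the identity of $A \otimes B$; this is the associator $\alpha_{A,B,C}$. Naturality in $C$ is immediate since $\alpha$ is a derived natural transformation; naturality in $A$ and $B$ comes from the uniqueness clause applied to morphisms of frames, which forces the naturality squares to commute (the two legs agree on $\U$). The pentagon and triangle are then two natural transformations between derived left Quillen functors that agree on $\U$, hence equal by the faithfulness part of Theorem \ref{MAIN}.

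The verification that $\Ho(\C)$ is a module over $\SHC$ for general stable $\C$ proceeds identically, comparing functors $\Sp \to \C$ instead of $\Sp \to \Sp$; closedness follows because $- \otimes A$ is a derived left adjoint, being represented by the left Quillen functor $\omega(-) \wedge A$. For the module-functor property of a left Quillen functor $F : \C \to \D$, one compares $F \circ (\omega X \wedge -)$ and $\omega F(X) \wedge -$ as functors $\Sp \to \D$: both are left Quillen (since $F$ is), and both send $\U$ to an object canonically weakly equivalent to $F(X)$ (on the left via $F$ applied to $X \simeq \omega X \wedge \U$, on the right by construction of $\omega$). Corollary \ref{uniquenessgen} then supplies the natural isomorphism $F(X \otimes A) \cong F(X) \otimes A$, and its compatibility with the associator and unitor is again forced by the uniqueness of natural transformations extending a fixed map on $\U$.

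The main obstacle I expect is bookkeeping: promoting the pointwise isomorphisms produced by Corollary \ref{uniquenessgen} into honest natural transformations of bi- and trifunctors, and checking that the highly non-canonical choice of stable framing $\omega$ does not leave any essential imprint on the resulting monoidal structure. The technical heart of this bookkeeping is already in Theorem \ref{MAIN}(b), which guarantees that a derived natural transformation depends only on its value on $\U$; this reduces every coherence square or hexagon to be checked to a statement about maps between underlying cofibrant objects, which is verifiable by inspection.
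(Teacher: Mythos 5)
Your proposal is correct and follows essentially the same route as the paper: both arguments rest on the strict associativity of composition of derived left Quillen functors (forced by the convention that $\Ho(\C)$ consists of cofibrant objects) together with the equivalence $ev_{\U}\colon \Ho(SF(\C)) \to \Ho(\C)$ of Theorem \ref{MAIN}, and both defer the coherence diagrams to routine checks. The only difference is presentational --- the paper packages the argument as transporting the strict monoidal/module structure on $\Ho(SF(\Sp))$ and $\Ho(SF(\C))$ along that equivalence, whereas you unwind the same transport by exhibiting each associator and unitor as the canonical isomorphism of Corollary \ref{uniquenessgen} and reducing each coherence diagram to its value on $\U$.
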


\begin{proof}
Consider the categories $\Ho(SF(\C))$ and $\Ho(SF(\Sp))$, regarded as functor categories. We obtain a functor $\Ho(SF(\C)) \times \Ho(SF(\Sp)) \rightarrow \Ho(SF(\C))$ by composition of derived Quillen functors $\SHC \rightarrow \SHC$ with derived Quillen functors $\SHC \rightarrow \Ho(\C)$ and horizontal composition of natural transformations.  Because of our conventions regarding left Quillen functors, the composition of their derived functors is strictiy associative, and the identity of $\SHC$, which is a left derived functor, acts as strict identity on $\Ho(SF(\C))$. Hence $\Ho(SF(\Sp))$ is monoidal and $\Ho(SF(\C))$ is a $\Ho(SF(\Sp))$-module. Clearly, composition with derived left Quillen functors induces (strict) $\Ho(SF(\Sp))$-module functors $\Ho(SF(\C)) \rightarrow \Ho(SF(\D))$. \\
Since $\SHC$ is equivalent to $\Ho(SF(\Sp))$, we can, after choosing inverse equivalences, pull the monoidal structure from the latter category over to $\SHC$; this destroys strict associativity and strict unitality, but it is still a coherent monoidal product, and this is actually the definition we have given above. That the result is again a monoidal category is certainly no surprise; the proof is just a long, tedious and uninspired diagram chase. Also observe that for a left Quillen functor $F: \C \rightarrow \D$, the following diagram commutes
\[
\xymatrix{
\Ho(SF(\C)) \ar[rr]^{F} \ar[d]_{ev_{\U}} & & \Ho(SF(\D)) \ar[d]^{ev_{\U}} \\
\Ho(\C) \ar[rr]^{F} & & \Ho(\D)
}
\]
Again, by tedious diagram chases, one verifies that the lower functor is an $\SHC$-module functor since the upper one is. This also implies that the choice of inverse $\omega$ is immaterial: The identity of $\C$, with maybe two different choices of $\omega$, is an equivalence of $\SHC$-modules.  That these modules are closed is clear since derived left Quillen functors have right adjoints. This proves the theorem.
\end{proof}


In particular, we have constructed a smash product on $\SHC$; the obvious question is whether we have actually constructed something new. The following Theorem says that this is not so.

\begin{thm}
\label{monoid}
Let $\C$ be a monoidal stable model category with pairing $\Box$ and unit $U$. Choose a left Quillen functor $F: \Sp \rightarrow \C$ sending $\U$ to a cofibrant replacement of $U$. Then the following holds:
\begin{itemize}
\item The composition 
\[
\xymatrix{
\Ho(\C) \times \SHC \ar[rr]^{Id \times F} & & \Ho(\C) \times \Ho(\C) \ar[rr]^{- \Box -} & & \Ho(\C)
}
\]
is a possible model for the enrichment functor for $\C$.
\item The derived functor $F: \SHC \rightarrow \Ho(\C)$ is strong monoidal.  In particular, if $\C$ is any symmetric monoidal model for stable homotopy theory, then $F$ induces a strong monoidal equivalence.
\end{itemize}
\end{thm}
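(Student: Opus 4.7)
The plan is to deduce both bullets from the uniqueness principle for left Quillen functors out of $\Sp$ recorded in Corollary \ref{uniquenessgen}, together with the module-functor property of $F$ established in the previous theorem. No new construction is needed: the enrichment $\otimes$ is characterized up to canonical isomorphism by its value at $\U$, and this lets us identify it with $- \Box F(-)$.

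For the first bullet, I would fix a cofibrant object $X$ of $\C$ and compare two left Quillen functors $\Sp \to \C$: the stable framing $\omega X \wedge -$, which by definition computes $X \otimes -$, and $X \Box F(-)$. The latter is the composite of the left Quillen functors $F$ and $X \Box -$, where $X \Box -$ is left Quillen because $X$ is cofibrant and $\C$ satisfies the pushout-product axiom. On the sphere spectrum, $\omega X \wedge \U \cong (\omega X)_{0,0}$ is weakly equivalent to $X$ by construction of a stable frame, while $X \Box F(\U) \simeq X \Box U \cong X$ using that $F(\U)$ is a cofibrant replacement of the unit $U$ together with the unit axiom. Corollary \ref{uniquenessgen} then produces a canonical natural isomorphism of derived functors $X \otimes - \simeq X \Box F(-)$; canonicity forces naturality in $X$ as well, giving the claimed identification at the level of bifunctors $\Ho(\C) \times \SHC \to \Ho(\C)$.

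For the second bullet, I invoke the previous theorem: any left Quillen functor between stable model categories induces an $\SHC$-module functor on homotopy categories. Applied to $F$, this yields a natural isomorphism $F(A \otimes B) \cong F(A) \otimes B$ compatible with the module coherence. Combining with the first bullet at $X = F(A)$ (which is cofibrant) produces $F(A) \otimes B \cong F(A) \Box F(B)$, and chaining these gives the desired structure isomorphism $F(A \otimes B) \cong F(A) \Box F(B)$. The unit constraint $F(\U) \simeq U$ holds by the choice of $F$, and the pentagon and unit axioms of a strong monoidal functor reduce to the module coherences on $\Ho(\C)$ and the monoidal coherences on $\SHC$. The "in particular" statement follows because if $\C$ is any symmetric monoidal model for $\SHC$, then $F$ is automatically a Quillen equivalence, hence a strong monoidal equivalence.

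The main obstacle is not existence but coherence: verifying that the structure isomorphism $F(A \otimes B) \cong F(A) \Box F(B)$ satisfies the strong monoidal pentagons and unitors is a diagram chase running through several applications of Corollary \ref{uniquenessgen}. The guiding principle is that any two iterated composites of derived left Quillen functors $\Sp \to \C$ that agree on $\U$ are canonically isomorphic, so every diagram of natural isomorphisms in sight commutes automatically; making this precise is the real work, but it introduces no ideas beyond those already developed in Section 6.
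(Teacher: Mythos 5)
Your overall route coincides with the paper's: part (i) is obtained by recognizing $X \Box F(-)$ as a left Quillen functor sending $\U$ to $X$ and hence as a legitimate choice of stable framing $\omega(X)$ (the paper simply \emph{defines} $\omega(X) = X \Box F(-)$ and uses the already-established independence of the enrichment from the choice of $\omega$, whereas you compare against a previously fixed $\omega$ via Corollary \ref{uniquenessgen}; these amount to the same thing, and your appeal to faithfulness of evaluation at $\U$ for naturality in $X$ is sound). Part (ii) is likewise assembled exactly as in the paper, from the module-functor isomorphism $F(A \otimes B) \cong F(A) \otimes B$ chained with $F(A) \otimes B = F(A) \Box F(B)$.

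The one place where your reasoning as stated would not go through is the coherence check. Your guiding principle --- ``any two iterated composites of derived left Quillen functors $\Sp \rightarrow \C$ that agree on $\U$ are canonically isomorphic, so every diagram of natural isomorphisms in sight commutes automatically'' --- conflates two different things. Theorem \ref{MAIN} gives that two \emph{natural transformations} between derived left Quillen functors are equal iff they agree at $\U$; it does not make a diagram of isomorphisms commute for free, it only reduces the verification to the sphere spectrum. After that reduction one still has to check an honest diagram in $\Ho(\C)$, and the paper is explicit that this residual check is ``not a triviality'': the associativity constraint of $\otimes$ and the associativity constraint of $\Box$ have no a priori relation, and the triangle that remains at $C = \U$ is identified with a coherence diagram for $\Box$ itself, so the monoidal coherence of $\Box$ on $\Ho(\C)$ is an essential input. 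Your summary that the axioms ``reduce to the module coherences on $\Ho(\C)$ and the monoidal coherences on $\SHC$'' omits precisely this ingredient; without invoking the coherence of $\Box$, the pentagon for the strong monoidal structure on $F$ cannot be closed. Separately, the final claim that $F$ is ``automatically a Quillen equivalence'' when $\C$ is a symmetric monoidal model for $\SHC$ is asserted rather than argued; what is actually needed (and what the paper implicitly relies on) is that the derived functor $F$ is an equivalence because it sends $\U$ to the unit, which generates, and induces isomorphisms on homotopy groups of the unit.
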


At first glance, this seems to be an extremely strong statement, but it actually is not, and it is already mainly known: In \cite{Smonoidal}, it is proven that symmetric spectra are in a certain sense initial among all monoidal model categories, and an analogue of our theorem holds for $\Ho(\Sp^{\Sigma})$ instead of $\Ho(\Sp)$. Hence the only new statement we make is that our smash product on $\SHC$ is compatible with the one from $\Sp^{\Sigma}$.

\begin{proof}
For i), we construct a particular inverse $\omega: \Ho(\C) \rightarrow \Ho(SF(\C))$ to evaluation at the sphere spectrum as follows: For any cofibrant object $X$ of $\C$, the functor $X \Box -$ is left Quillen, and we set $\omega (X) = X \Box F(-)$, where by abuse of notation we also denote the derived product on $\Ho(\C)$ by $\Box$.  By definition, $\omega(X) (\U) \cong X$ in the homotopy category. A morphism $f: X \rightarrow Y$ in $\C$ induces a natural transformation $\omega X \rightarrow \omega Y$ covering, up to homotopy, $f$ on the sphere spectrum; on the homotopy category, these constructions hence yield a functor $\omega: \Ho(\C) \rightarrow \Ho(SF(\C))$ inverse to evaluation at the sphere spectrum, and we may use this particular inverse to construct the enrichment. Now, for an object $X$ of $\Ho(\C)$ and a spectrum $A$, we have $X \otimes A = \omega(X)(A) = X \Box F(A)$ by definition, and the claim follows. \\
For ii), we have to produce a natural isomorphism $F(A \otimes B) \cong F(A) \Box F(B)$. By naturality of the enrichment with respect to left Quillen functors, we have an isomorphism $F(A \otimes B) \cong F(A) \otimes B$, and by part i) we may arrange things such that $F(A) \otimes B = F(A) \Box F(B)$. This isomorphism is natural in both variables; it remains to check the commutativity of various coherence diagrams, cf. \cite[XI.2]{Maclan}. \\
The first coherence diagram is
\[
\xymatrix{	
	F(A) \Box(F(B) \Box F(C))  \ar[rr] \ar[d] & & (F(A) \Box F(B)) \Box F(C) \ar[d] \\
	F(A) \Box F(B \otimes C) \ar[d] & & F(A \otimes B) \Box F(C) \ar[d] \\
	F(A \otimes (B \otimes C)) \ar[rr] & & F((A \otimes B) \otimes C)
}
\]
for spectra $A, B, C$. Using the equality $F(A) \Box F(B) = F(A) \otimes B$, this diagram reads

\[
\xymatrix{	
	F(A) \Box (F(B) \otimes C)  \ar[rr] \ar[d] & & (F(A) \otimes B) \otimes C \ar[d] \\
	F(A) \otimes (B \otimes C) \ar[d] \ar[urr] & & F(A \otimes B) \otimes C \ar[d] \\
	F(A \otimes (B \otimes C)) \ar[rr] & & F((A \otimes B) \otimes C)
}
\] 
The diagonal arrow is obtained from associativity of the enrichment. Now, the lower pentagram is one of the coherence diagrams required in the definition of an $\SHC$-module functor and hence commutes; it remains to see commutativity of the upper triangle. First note this is not a triviality; we have once used associativity of $\Box$ and once associativity of $\otimes$, and there is no immediate reason these should be related - indeed, we have not yet used that $\Box$ is coherent. By letting $C$ vary, we obtain derived left Quillen functors $F(A) \Box (F(B) \otimes -)$, $(F(A) \otimes B) \otimes -$ and $F(A) \otimes (B \otimes -)$ and natural transformations between these; since the behaviour of derived Quillen functors is determined by what is happening on the sphere spectrum by \ref{MAIN}, we may reduce to $C = \U$. \\
In this case, coherence of the $\SHC$-action tells us that the composition $F(A) \otimes (B \otimes \U) \cong F(A) \otimes B \cong (F(A) \otimes B) \otimes \U$ of the two unit maps is the associativity isomorphism for $A, B, \U$. The triangle hence takes the following form, again using $X \otimes A = X \Box F(A)$:
\[
\xymatrix{
		F(A) \Box (F(B) \Box F(\U)) \ar[rr] \ar[d] & & (F(A) \otimes B) \otimes \U \\
		F(A) \otimes (B \otimes \U) \ar[rr] & & F(A) \otimes B \ar[u]
		}
\]
Also, by construction, we have not only an equality $X \otimes \U = X \Box F(\U)$, but also the two possible unit maps $X \otimes \U \cong X$ from the $\SHC$-action and $X \otimes F(\U) \cong X \otimes U \cong X$ from the $\Box$-product are the same, so we do not have to worry about using the wrong unit map. Translating back in the $\Box$-language, we hence may as well consider the diagram
\[
\xymatrix{
		F(A) \Box (F(B) \Box F(\U)) \ar[rr] \ar[d] & & (F(A) \Box F(B)) \Box  F(\U) \\
		F(A) \Box F(B \otimes \U) \ar[rr] & & F(A) \Box F(B) \ar[u]
		}
\]
The lower horizontal map is the map obtained by going back up to $F(A) \Box (F(B) \Box F(\U))$ and then using the unit map to $F(A) \Box F(B)$, and the top horizontal map is the associativity isomorphism in $\Ho(\C)$. This is now a coherence diagram for $\Box$ and hence commutes, proving commutativity of the first coherence diagram. \\
The other coherence diagram to check is the one involving the right unit isomorphisms:
\[
\xymatrix{
		U \Box F(B) \ar[rr] \ar[d] & & F(B) \\
		F(\U) \Box F(B) \ar[rr]  & & F(\U \otimes B) \ar[u] }
\]
As above, we may reduce to $B = \U$, where the commutativity is clear from the properties of $F$. 
\end{proof}

\begin{cor}
Let $\C$ be any symmetric monoidal model for the stable homotopy category. The smash product on $\Ho(\Sp)$ we have constructed is, under the Quillen equivalence $\Sp \rightarrow \C$ sending $\U$ to the unit of $\C$, equivalent to the one in $\Ho(\C)$.
\end{cor}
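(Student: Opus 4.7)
The plan is to invoke Theorem \ref{monoid} as a near black box, since this corollary is essentially the object-level consequence of the strong monoidality assertion proved there. First I would ensure that there actually is a left Quillen functor $F: \Sp \to \C$ sending $\U$ to a cofibrant replacement of the unit $U$ of $\C$. This is provided by Theorem \ref{existence}: take a cofibrant-fibrant replacement $\tilde{U}$ of $U$ in $\C$ and build a fibrant stable frame on it, which by Theorem \ref{genstableframings} corresponds to the desired left Quillen functor. Corollary \ref{uniquenessgen} says that any two such $F$ have canonically isomorphic derived functors, so there is essentially no ambiguity in the choice.

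Next I would argue that this $F$ is a Quillen equivalence. Since $\C$ is by hypothesis a (symmetric monoidal) model for the stable homotopy category, $\Ho(\C)$ is equivalent to $\SHC$. The derived functor $F: \SHC \to \Ho(\C)$ sends $\U$ to the unit of $\Ho(\C)$; by the universal property of $\Sp$ packaged in Theorem \ref{MAIN}, any derived left Quillen functor out of $\Sp$ is determined up to natural isomorphism by its value on $\U$. Combined with the observation that any equivalence $\SHC \to \Ho(\C)$ must send $\U$ to the unit, this forces $F$ itself to be an equivalence of homotopy categories, i.e., $F$ is a Quillen equivalence.

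Now Theorem \ref{monoid}(ii) applies directly: it asserts that $F: \SHC \to \Ho(\C)$ is strong monoidal with respect to the smash product $\otimes$ constructed in Section~7 on the left and the native monoidal product $\Box$ on the right. A strong monoidal functor which is an equivalence of underlying categories is a monoidal equivalence, and that is exactly the content of the corollary: under the Quillen equivalence $\Sp \to \C$, the newly constructed $\otimes$ on $\SHC$ is transported to $\Box$ on $\Ho(\C)$.

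The main obstacle is the verification that $F$ is a Quillen equivalence; once that is in hand, the monoidal part is free from Theorem \ref{monoid}. That verification in turn reduces to the uniqueness-of-derived-left-Quillen-functors principle from Theorem \ref{MAIN} and Corollary \ref{uniquenessgen}, both of which have already been established, so the corollary is ultimately a packaging statement rather than a new computation.
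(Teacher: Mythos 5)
Your overall route is the paper's: the corollary is read off from Theorem \ref{monoid}(ii) (the ``in particular'' clause), with existence of $F$ supplied by Theorem \ref{existence} and well-definedness by Corollary \ref{uniquenessgen}, and the final step that a strong monoidal functor which is an equivalence is a monoidal equivalence is standard. The issue is the step you yourself single out as the main obstacle: your argument that $F$ is a Quillen equivalence does not work. First, it is not true that ``any equivalence $\SHC \rightarrow \Ho(\C)$ must send $\U$ to the unit'': composing an equivalence with a shift, or with tensoring by an invertible object, gives an equivalence that does not. Second, and more fundamentally, even if you are handed an equivalence $E: \SHC \rightarrow \Ho(\C)$ with $E(\U)$ isomorphic to the unit, the uniqueness principle of Theorem \ref{MAIN} / Corollary \ref{uniquenessgen} only compares \emph{derived left Quillen functors} with one another; it gives no way to conclude $F \cong E$ unless $E$ is itself known to be (isomorphic to) a derived left Quillen functor, which is not part of the hypothesis. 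So ``both send $\U$ to the unit, hence $F$ is an equivalence'' is a non sequitur.

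The statement of the corollary actually sidesteps this: it is phrased ``under the Quillen equivalence $\Sp \rightarrow \C$ sending $\U$ to the unit,'' i.e., the existence of that Quillen equivalence is taken as given (this is the uniqueness result of \cite{SS}, which needs genuinely more input than determination by the value on $\U$ --- roughly, that the unit is a compact generator whose graded endomorphism ring is carried isomorphically onto $\pi_*\U$ by $F$). Granting that, your proof is correct and coincides with the paper's: apply Theorem \ref{monoid}(ii) and transport the monoidal structure along the resulting strong monoidal equivalence. If you want to keep your stronger formulation, you should either cite \cite{SS} for the Quillen equivalence or add the hypothesis explicitly; the argument you gave cannot replace that input.
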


\begin{rem}
The proof of the Theorem actually proves something slightly stronger: There is no need for a monoidal structure on $\C$, one only needs a monoidal structure on $\Ho(\C)$ which is induced by Quillen functors and natural transformations, but which may be not associative or unital on the nose - as for example our construction of the smash product in $\SHC = \Ho(\Sp)$. 
\end{rem}

\begin{example}
Let $R$ be any ring. There is a stable model structure on the category $Ch(R)$ of unbounded chain complexes of $R$-modules with the homology isomorphisms as weak equivalences, see \cite[2.3.3]{Ho}. This is actually a monoidal model category under the tensor product of chain complexes, see \cite[4.2.13]{Ho}. For any $R$-module $M$, we denote the chain complex which has a copy of $M$ in degree $0$ and $0$ in all other degree also by $M$. For any chain complex $X$, we have that $H_*(X) \cong [R, X]_*$ where $[-,-]$ denotes morphisms in the derived category; since all chain complexes are fibrant and $R$ is cofibrant, $[R, X]_*$ is just given by chain homotopy classes of morphism $R \rightarrow X$. \\
 We can find a left Quillen functor $F: \Sp \rightarrow Ch(R)$ sending $\U$ up to weak equivalence to $R$. By the preceding theorem, the $\SHC$-enrichment of $\Ho(Ch(R))$ is given by $X \otimes F(A)$ for a spectrum $A$ and a chain complex $X$, so we would like to understand what kind of chain complex $F(A)$ is. Since $F: \SHC \rightarrow \Ho(Ch(R))$ is a triangulated functor and homology is homomorphisms out of $R$ in $Ch(R)$, it follows that the functor $h_*(F(-))$ is a homology theory, and the coefficient groups are easily calculated to be $R$ in degree $0$ and $0$ else. Hence the homology of $F(A)$ is the ordinary singular homology of $A$ with $R$-coefficients and $F(A)$ is a kind of "singular chain complex" of the spectrum $A$. This also gives an easy proof of the K\"unneth formula for spectra: We know that $F$ is monoidal, hence we must have $F(X \wedge Y) \cong F(X) \otimes F(Y)$! \\
Also note that $\Map(R,R)$ is an Eilenberg-MacLane spectrum for $R$ since 
\[
[\U, \Map(R,R)]_* \cong [R, R]
\]
In general, given two objects $X, Y$ of a stable model category, we can form a cohomology theory by taking the derived Quillen functor $G: \SHC \rightarrow \Ho(\C)$ sending $\U$ to $Y$ and then setting $h^*(A) = [G(A), X]_* \cong [A, \Map(Y,X)]_*$ with coefficients $[Y, X]_*$.
\end{example}

\subsection{Compatibility with the triangulated structure}

Both $\SHC$ and $\Ho(\C)$ are triangulated categories; so the enrichment functor $\SHC \times \Ho(\C) \rightarrow \Ho(\C)$ ought to be compatible with this structure.  The following theorem tells us this is indeed so:

\begin{thm}
\label{biexact}
The functor $\otimes: \Ho(\C) \times \SHC \rightarrow \Ho(\C)$ is biexact: For any object $A$ of $\SHC$ and any $X$ in $Ho(\C)$, the functors
\[
- \otimes A: \Ho(\C) \rightarrow \Ho(\C)
\]
and
\[
X \otimes -: \SHC \rightarrow \Ho(\C)
\]
preserve triangles and are additive.
\end{thm}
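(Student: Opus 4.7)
\medskip

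\noindent\textbf{Proof proposal.} The plan is to handle the two variables separately, since they play quite different roles: the $\SHC$-variable is a genuine Quillen variable (we are evaluating a left Quillen functor $\omega X \wedge -$ on a spectrum), while the $\Ho(\C)$-variable is only accessed through the framing $\omega$. Additivity in both variables will follow quickly from general principles once exactness is established, because a triangulated functor between triangulated categories that preserves the zero object and direct sums of the form $X \oplus Y$ realized as the split triangle $X \to X\oplus Y \to Y$ is automatically additive; alternatively, $\omega X \wedge -$ preserves coproducts as a left adjoint, and $\omega$ is an equivalence of categories hence preserves biproducts.

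\medskip

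\noindent For fixed $X \in \Ho(\C)$, the functor $X \otimes - = \omega X \wedge -$ is, by construction, the total left derived functor of a left Quillen functor $\Sp \to \C$. Since both $\Sp$ and $\C$ are stable model categories, such a derived functor is known to commute with the suspension (up to canonical natural isomorphism) and to send cofiber sequences of cofibrant spectra to cofiber sequences in $\C$; equivalently, it sends distinguished triangles in $\SHC$ to distinguished triangles in $\Ho(\C)$. This is the easy half.

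\medskip

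\noindent For fixed $A \in \SHC$, the functor $- \otimes A$ is harder because it factors through the framing $\omega \colon \Ho(\C) \to \Ho(SF(\C))$, which is only an equivalence of categories and not obviously derived from a Quillen functor. The strategy is to realize triangles by \emph{compatible} stable frames. Start with a cofiber sequence $X \xrightarrow{f} Y \to Y/X$ represented by a cofibration $f$ between cofibrant-fibrant objects. Choose a stable frame $\omega X$ on $X$, then build a stable frame $\omega Y$ on $Y$ together with a cofibration $\omega X \hookrightarrow \omega Y$ in $\C^{\Delta}(\Sigma)$ covering $f$; this is possible by combining Theorem~\ref{MAIN}(a) with the factorization axiom in $\C^{\Delta}(\Sigma)$ to arrange that the lift is a cofibration. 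Form the cofiber $C = \omega Y / \omega X$ in $\C^{\Delta}(\Sigma)$; evaluation at $(0,0)$ gives $Y/X$, so by Theorem~\ref{MAIN}(d) and Corollary~\ref{uniquenessgen}, $C$ represents the same derived functor as any fibrant stable frame on $Y/X$. Now apply $- \wedge A$ for a cofibrant spectrum $A$: this functor preserves pushouts and cofibrations of cofibrant objects by Proposition~\ref{compatible}, hence sends the cofiber sequence $\omega X \to \omega Y \to C$ in $\C^{\Delta}(\Sigma)$ to the cofiber sequence $\omega X \wedge A \to \omega Y \wedge A \to C \wedge A$ in $\C$. This is precisely a triangle representing $X \otimes A \to Y \otimes A \to (Y/X) \otimes A$.

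\medskip

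\noindent The main obstacle is verifying that the cofiber $C = \omega Y/\omega X$ is indeed a stable frame. Level by level, $C_n$ is the pushout $\omega Y_n \cup_{\omega X_n} *$ of cosimplicial objects, so Reedy cofibrancy is preserved by the pushout along a Reedy cofibration; for homotopical constancy, one compares the square of cosimplicial structure maps for $\omega X_n \to \omega Y_n$ at cosimplicial degrees $p$ and $q$ and applies the cube/gluing lemma, using that $\omega X_n$ and $\omega Y_n$ are homotopically constant and the vertical map is a cofibration to conclude that the induced map $C_{n,p} \to C_{n,q}$ is a weak equivalence. For the spectrum structure maps, one argues similarly: the map $\Sigma C_n \to C_{n-1}$ sits in a commutative square with $\Sigma \omega X_n \to \omega X_{n-1}$ (a weak equivalence) and $\Sigma \omega Y_n \to \omega Y_{n-1}$ (a weak equivalence), and again the gluing lemma along the cofibration $\Sigma \omega X_n \to \Sigma \omega Y_n$ yields the desired weak equivalence. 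These two lemmas are the technical heart of the argument; once they are in place, biexactness follows formally from the two paragraphs above.
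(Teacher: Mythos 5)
Your proposal follows essentially the same route as the paper: the $X\otimes-$ direction via the fact that it is a derived left Quillen functor, and the $-\otimes A$ direction by realizing the cofiber sequence as a Reedy cofibration of stable frames, proving the cofiber is again a stable frame with the gluing lemma (applied once to homotopical constancy and once to the structure maps, exactly as in Proposition~\ref{cube} and its corollaries), and then applying the colimit- and cofibration-preserving functor $-\wedge A$. The only point where you are quicker than the paper is the closing claim that biexactness then ``follows formally'': the paper still has to construct the natural isomorphism $(\Sigma X)\otimes A\cong\Sigma(X\otimes A)$ via a cylinder-object argument and check that the connecting maps match, which is routine with the machinery you have set up but is not zero work.
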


\begin{proof} 
The statement involving $X$ holds since $X \otimes -$ is the derived functor of the Quillen functor $X \wedge -$ which respects the triangulated structure by \cite[6.4.1]{Ho}. The other direction needs a careful analysis of what happens with a triangle under $- \otimes A$ for which we need more machinery; we defer the main part of the proof to the end of this section.
\end{proof}

The idea of the proof is to start with a cofiber sequence $X \rightarrow Y \rightarrow Z$ and try to build a sequence of stable frames $\omega X \rightarrow \omega Y \rightarrow \omega Z$ which is again a cofiber sequence, and then plugging $A$ into the three functors to obtain a cofiber sequence $X \wedge A \rightarrow Y \wedge A \rightarrow Z \wedge A$. To carry through this program, we first need the following useful proposition which is from \cite[5.2.6]{Ho}.

\begin{prop}
\label{cube}
Let $\C$ be a model category. Suppose we have two pushout squares $X_0$, $X_1$ of the form
\[
\xymatrix{
	A_i \ar[rr]^{f_i} \ar[d]_{g_i} & & B_i \ar[d] \\
	C_i \ar[rr] & & D_i
}
\]
for $i = 0, 1$ such that $f_0$ and $f_1$ are cofibrations and all objects are cofibrant. Furthermore assume we have a map of diagrams $X_0 \rightarrow X_1$ such that the maps $A_0 \rightarrow A_1$, $B_0 \rightarrow B_1$ and $C_0 \rightarrow C_1$ are weak equivalences. Then the map on the pushouts $D_0 \rightarrow D_1$ is also a weak equivalence.
\end{prop}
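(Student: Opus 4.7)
The plan is to prove this gluing/cube lemma by reducing to a base case via standard model-categorical factorizations. Because $\C$ is not assumed to be left proper, we cannot freely push out arbitrary weak equivalences; however, the hypothesis that all objects are cofibrant and that $f_0, f_1$ are cofibrations is enough to carry the argument through. The argument essentially unpacks the proof in \cite[5.2.6]{Ho}, paying attention to using only cofibrancy of objects rather than functoriality of factorizations.

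The first reduction step is as follows. Factor the vertical map $A_0 \to A_1$ as $A_0 \hookrightarrow \tilde{A} \twoheadrightarrow A_1$, an acyclic cofibration followed by an acyclic fibration, and form the intermediate pushouts $\tilde{B} := B_0 \cup_{A_0} \tilde{A}$ and $\tilde{C} := C_0 \cup_{A_0} \tilde{A}$. Stability of acyclic cofibrations under pushouts yields acyclic cofibrations $B_0 \to \tilde{B}$ and $C_0 \to \tilde{C}$, while $\tilde{A} \to \tilde{B}$ is still a cofibration, being a pushout of $f_0$. Set $\tilde{D} := \tilde{B} \cup_{\tilde{A}} \tilde{C}$; by associativity of pushouts the canonical map $D_0 \to \tilde{D}$ is a pushout of the acyclic cofibration $A_0 \hookrightarrow \tilde{A}$ along $A_0 \to D_0$, hence an acyclic cofibration. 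The universal property produces canonical maps $\tilde{B} \to B_1$ and $\tilde{C} \to C_1$, which are weak equivalences by two-out-of-three. Thus it suffices to show that the induced map $\tilde{D} \to D_1$ is a weak equivalence; this is a new instance of the cube problem in which the apex vertical map $\tilde{A} \twoheadrightarrow A_1$ is now an acyclic fibration.

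Iterating the analogous factor-and-glue procedure on the remaining two vertical maps (each time preserving cofibrancy of all objects and the cofibration status of the newly produced apex-to-leg maps) reduces the entire problem to a base case in which every vertical map is an acyclic cofibration. In that base case, the induced map on pushouts is visibly an acyclic cofibration, being built as a composite of pushouts of acyclic cofibrations along cofibrations. Alternatively, one may conclude the base case by Ken Brown's lemma \cite[1.1.12]{Ho} applied to the left Quillen functor ``pushout along the cofibration $f_1$,'' namely $B_1 \cup_{A_1} (-) : A_1 \downarrow \C \to \C$, which preserves weak equivalences between cofibrant objects.

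The main technical obstacle I anticipate is the bookkeeping at each reduction step: one must verify that the newly introduced apex-to-leg map remains a cofibration (automatic, as it is a pushout of $f_0$ or $f_1$), and that all relevant objects remain cofibrant (automatic, since cofibrant objects are closed under pushouts along cofibrations). Without left properness for $\C$ the one-line shortcut of ``push out the weak equivalence directly'' is unavailable; the saving graces are the full cofibrancy hypothesis on objects, the stability of acyclic cofibrations under pushouts, and two-out-of-three, which together make the multi-step reduction go through.
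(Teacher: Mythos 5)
Your first reduction step is correct and well executed: factoring $A_0 \hookrightarrow \tilde A \twoheadrightarrow A_1$, gluing to form the span $\tilde C \leftarrow \tilde A \to \tilde B$, and identifying $D_0 \to \tilde D$ as the pushout of the acyclic cofibration $A_0 \to \tilde A$ along $A_0 \to D_0$ all check out. The argument breaks down after that, in two places. First, the iteration does not land where you say it does: each application of ``factor and glue'' absorbs the acyclic-cofibration part of a vertical map into the top span and leaves behind an acyclic \emph{fibration}, so after treating all three legs the residual cube has all vertical maps acyclic fibrations, not acyclic cofibrations. That is the base case you would actually have to handle, and nothing in the proposal addresses it; the sections of these acyclic fibrations supplied by cofibrancy need not commute with the span maps, so this case is not visibly easier than the original statement.

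Second, the base case you do state is not ``visible'' by the reason you give. If all three vertical maps are acyclic cofibrations, the natural factorization of $D_0 \to D_1$ is $D_0 \to D_0\cup_{C_0}C_1 \to (D_0\cup_{C_0}C_1)\cup_{B_0}B_1 \to D_1$; the first two maps are indeed pushouts of acyclic cofibrations, but the last one is $B_1\cup_{A_1}(-)$ applied to the corner map $A_1\cup_{A_0}C_1 \to C_1$, which is a weak equivalence by two-out-of-three but is \emph{not} a cofibration in general. Pushing out a non-cofibration weak equivalence along a cofibration is exactly what left properness would license and what you correctly note you cannot assume. The Ken Brown alternative fails for the same underlying reason: the hypothesis gives cofibrancy of $C_1$ in $\C$, not of $A_1 \to C_1$ as an object of $A_1\downarrow\C$ (and $C_0$ does not even live under $A_1$), so the $C$-leg is not a weak equivalence between cofibrant objects of that under-category. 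This asymmetry --- only $A\to B$ is required to be a cofibration, not $A\to C$ --- is the entire content of the lemma. The paper itself simply cites \cite[5.2.6]{Ho}, whose proof equips the category of spans with a Reedy-type model structure (cofibrant objects being precisely the spans with all objects cofibrant and $A\to B$ a cofibration, cofibrations of spans being defined via corner maps) and applies Ken Brown's lemma to the colimit functor. Your factor-and-glue step is essentially the first move in verifying Ken Brown's lemma by hand, but to make it work you must factor at the level of spans using that Reedy notion of cofibration, so that the corner maps such as $B_0\cup_{A_0}\tilde A \to B_1$ are themselves arranged to be cofibrations; the naive pushout $\tilde B := B_0\cup_{A_0}\tilde A$ does not achieve this.
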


\begin{proof}
See \cite[5.2.6]{Ho}. 
\end{proof}

We can use this to prove that the cofiber of a cofibration between stable frames is again a stable frame:

\begin{prop}
Let $f: \omega X \rightarrow \omega Y$ be a cofibration (i.e., a pointwise Reedy cofibration) between two stable frames. Then the mapping cone $C(f)$ is again a stable frame.
\end{prop}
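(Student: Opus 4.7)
The plan is to verify the two defining properties of a stable frame for $C(f)$: levelwise, each $C(f)_n$ must be a cosimplicial frame (Reedy cofibrant and homotopically constant), and the structure maps $\Sigma C(f)_n \rightarrow C(f)_{n-1}$ must be weak equivalences. In both parts the workhorse will be the cube lemma (Proposition \ref{cube}).

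First, fix $n$. By assumption, the map $f_n: \omega X_n \rightarrow \omega Y_n$ is a Reedy cofibration between Reedy cofibrant cosimplicial objects. The mapping cone $C(f)_n$ is constructed as a pushout of $\omega X_n \rightarrow \omega Y_n$ along $\omega X_n \rightarrow *$ in $\C^\Delta$. Since pushouts preserve cofibrations, the map $* \rightarrow C(f)_n$ is a Reedy cofibration, so $C(f)_n$ is Reedy cofibrant.

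Next, to show $C(f)_n$ is homotopically constant, let $\alpha: [m] \rightarrow [m']$ be any morphism of $\Delta$. We must show the induced map $C(f)_{n,m} \rightarrow C(f)_{n,m'}$ is a weak equivalence in $\C$. Consider the evident map between the two pushout squares
\[
\xymatrix{
\omega X_{n,m} \ar[r] \ar[d] & \omega Y_{n,m} \ar[d] \\
* \ar[r] & C(f)_{n,m}
}
\qquad
\xymatrix{
\omega X_{n,m'} \ar[r] \ar[d] & \omega Y_{n,m'} \ar[d] \\
* \ar[r] & C(f)_{n,m'}
}
\]
Because Reedy cofibrations in $\C^\Delta$ are levelwise cofibrations in $\C$, the top horizontal maps are cofibrations and all four corners are cofibrant. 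The three comparison maps $\omega X_{n,m} \rightarrow \omega X_{n,m'}$, $\omega Y_{n,m} \rightarrow \omega Y_{n,m'}$, and $* \rightarrow *$ are weak equivalences since $\omega X_n$ and $\omega Y_n$ are homotopically constant. Proposition \ref{cube} then gives that the induced map on pushouts $C(f)_{n,m} \rightarrow C(f)_{n,m'}$ is a weak equivalence. Hence $C(f)_n$ is a cosimplicial frame.

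For the structure maps, note that $\Sigma: \C^\Delta \rightarrow \C^\Delta$ is a left adjoint, hence commutes with pushouts, so $\Sigma C(f)_n$ is the pushout of $\Sigma \omega X_n \rightarrow \Sigma \omega Y_n$ along $\Sigma \omega X_n \rightarrow *$. By Lemma \ref{sigma2}, $\Sigma$ preserves Reedy cofibrations and Reedy cofibrant objects, so applying the cube lemma once more to the map of pushout diagrams
\[
\xymatrix{
\Sigma \omega X_n \ar[r] \ar[d] & \Sigma \omega Y_n \ar[d] \\
* \ar[r] & \Sigma C(f)_n
}
\qquad
\xymatrix{
\omega X_{n-1} \ar[r] \ar[d] & \omega Y_{n-1} \ar[d] \\
* \ar[r] & C(f)_{n-1}
}
\]
whose comparison maps $\Sigma \omega X_n \rightarrow \omega X_{n-1}$ and $\Sigma \omega Y_n \rightarrow \omega Y_{n-1}$ are weak equivalences by the stable frame hypothesis on $\omega X$ and $\omega Y$, yields that $\Sigma C(f)_n \rightarrow C(f)_{n-1}$ is a weak equivalence. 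The main thing one has to be careful about is keeping track of the cofibrancy hypotheses needed to invoke Proposition \ref{cube}, but these are automatic from the Reedy cofibrancy of $\omega X$, $\omega Y$ and the fact that $f$ is a cofibration of $\Sigma$-cospectra; nothing genuinely new is required beyond these bookkeeping checks.
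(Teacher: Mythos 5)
Your proof is correct and follows essentially the same strategy as the paper: Reedy cofibrancy of $C(f)_n$ from the pushout of a cofibration, homotopical constancy via the cube lemma applied to the map of pushout squares, and the structure maps again via the cube lemma using that $\Sigma$ is a left adjoint. The only cosmetic difference is that for homotopical constancy you compare the squares at cosimplicial levels $m$ and $m'$ directly, whereas the paper compares the square for $C(f)_n$ with the one for the constant cosimplicial object $c(C(f)_{n,0})$; both are immediate applications of Proposition \ref{cube}.
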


\begin{proof}
We have to check that $C(f)$ is pointwise a cosimplicial frame and that the structure maps are weak equivalences. Since colimits are taken componentwise, we have to check that the cofiber of a Reedy cofibration $g: A \rightarrow B$ between two cosimplicial frames is again a frame. The object $C(g)$ is Reedy cofibrant since $\cdot \rightarrow C(g)$ is the pushout of the cofibration $g$, so it remains to check that it is also homotopically constant. A cosimplicial frame is homotopically constant if and only if the canonical map $A \rightarrow c(A_0)$ into the constant cosimplicial object on $A_0$ is a weak equivalence. Since colimits of cosimplicial objects are also taken componentwise, we have the two pushout diagrams 

\[
\xymatrix{
		A \ar[rr]^{g} \ar[d] & & B \ar[d]  & & & c(A_0) \ar[rr]^{c(g_0)} \ar[d] & & c(B_0) \ar[d] \\
		\cdot \ar[rr] & & C(g) & & & \cdot \ar[rr] & & c(C(g)_0)
}
\]

The canonical maps from a cosimplicial object $Y$ to $c(Y_0)$ define a map of diagrams between these two pushout diagrams. Since the maps $A \rightarrow c(A_0)$ and $B \rightarrow c(B_0)$ are weak equivalences and the top maps in the two diagrams are cofibrations, we can apply Proposition \ref{cube} to conclude that $C(g)$ is homotopically constant. \\
To check that the structure maps in $C(f)$ are weak equivalences, note that we have a pushout square
\[
\xymatrix{
	\Sigma \omega^n X \ar[rr] \ar[d] & & \Sigma \omega^n Y \ar[d] \\
	\cdot \ar[rr] & & \Sigma C(f)_n
}
\] 
since $\Sigma$ is a left adjoint; and we have a pushout square
\[
\xymatrix{
	\omega^{n-1} X \ar[rr] \ar[d]  & & \omega^{n-1} Y \ar[d] \\
	\cdot \ar[rr] & & C(f)_{n-1}
}
\]
The structure maps of $\omega X$ and $\omega Y$ induce a map of diagrams between these two diagrams, and the induced map on the pushouts is the structure map of $C(f)$. Since the maps $\Sigma \omega^n X \rightarrow \omega^{n-1} X$ and $\Sigma \omega^n Y \rightarrow \omega^{n-1} Y$ are weak equivalences, so is the structure map of $C(f)$ by Proposition \ref{cube}.
\end{proof}

With this in hand, we can realize a cofiber sequence in $\C$ by a cofiber sequence of stable frames, at least up to weak equivalence:

\begin{prop}
Let $f: X \rightarrow Y$ be a cofibration between cofibrant objects in a stable model category $\C$. Then we can find stable frames $\omega X$, $\omega' Y$ with weak equivalences $X \rightarrow \omega X \wedge \U$  and $Y \rightarrow \omega' Y \wedge \U$ and a Reedy cofibration $F: \omega X \rightarrow \omega' Y$ which extends $f$, i.e., such that the diagram
\[
\xymatrix{
	X \ar[d]^{\simeq} \ar[rr]^{f} & & Y \ar[d]^{\simeq} \\
	\omega X \wedge \U \ar[rr]^{F \wedge \U} & & \omega' Y \wedge \U \\
}
\] 
commutes. The cofiber $C(F)$ of $F$ is then a stable frame on an object weakly equivalent to the cofiber of $f$.
\end{prop}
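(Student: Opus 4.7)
The plan is to construct $\omega X$ and $\omega' Y$ together, level by level in the $\Sigma$-direction, using factorizations in the Reedy model structure on $\C^\Delta$ together with the stability of $\C$ to ensure the cospectrum structure maps are weak equivalences. First, produce a fibrant stable frame $\omega X$ on $X$ via Theorem \ref{existence}, arranged so that $(\omega X)_{0,0} = X$. The goal is then to build cosimplicial frames $(\omega' Y)^n$ and Reedy cofibrations $F_n : (\omega X)^n \hookrightarrow (\omega' Y)^n$ inductively, assembling into a morphism of $\Sigma$-cospectra extending $f$.

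For $n = 0$, form the composite $(\omega X)^0 \to c(X) \xrightarrow{c(f)} c(Y)$, where the first map is the unit of the adjunction $ev_0 \dashv c$, and factor it in $\C^\Delta$ as a Reedy cofibration $F_0: (\omega X)^0 \hookrightarrow (\omega' Y)^0$ followed by a Reedy acyclic fibration $(\omega' Y)^0 \to c(Y)$. Choose the factorization at level $0$ to be the trivial one $X \xrightarrow{f} Y \xrightarrow{\mathrm{id}} Y$, which is legitimate since $f$ is already a cofibration; this ensures $(\omega' Y)^0_0 = Y$ and $F_{0,0} = f$, so the square in the statement commutes on the nose with identity weak equivalences. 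The intermediate object $(\omega' Y)^0$ is Reedy cofibrant (as a cofibrant extension of a Reedy cofibrant object) and, being levelwise weakly equivalent to $c(Y)$, homotopically constant, hence a cosimplicial frame on $Y$.

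Inductively, assume the construction has been carried out through level $n$, with $(\omega' Y)^n$ Reedy fibrant (replacing fibrantly if needed, without disturbing $F_n$). Form the composite
\[
(\omega X)^{n+1} \to \Omega (\omega X)^n \xrightarrow{\Omega F_n} \Omega (\omega' Y)^n
\]
using the adjoint of the structure map of $\omega X$ at level $n+1$, and factor it as a Reedy cofibration $F_{n+1}: (\omega X)^{n+1} \hookrightarrow (\omega' Y)^{n+1}$ followed by a Reedy acyclic fibration to $\Omega(\omega' Y)^n$; declare the structure map of $\omega' Y$ at level $n+1$ to be the adjoint $\Sigma (\omega' Y)^{n+1} \to (\omega' Y)^n$. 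The main obstacle will be verifying that $(\omega' Y)^{n+1}$ is indeed a cosimplicial frame and that this adjoint is a weak equivalence: homotopic constancy should follow by $2$-out-of-$3$ from the fact that $\Omega$ preserves it on Reedy fibrant inputs (which is why we arranged Reedy fibrancy of $(\omega' Y)^n$), while the adjoint structure map being a weak equivalence will invoke Proposition \ref{stability} together with Lemma \ref{sigma2}: $\Sigma (\omega' Y)^{n+1}$ is a frame, so realization weak equivalences between $\Sigma (\omega' Y)^{n+1}$ and the frame $(\omega' Y)^n$ coincide with genuine weak equivalences, and the Reedy acyclic fibration on the $\Omega$-side transports through the adjunction to a realization weak equivalence on the $\Sigma$-side.

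Finally, by construction $F$ is a pointwise Reedy cofibration between stable frames, so the preceding proposition yields that $C(F)$ is itself a stable frame; evaluating at $\U$ gives $C(F) \wedge \U = C(F_{0,0}) = C(f)$, the cofiber of $f$ itself (or weakly equivalent to it, depending on how much one arranges in the base case).
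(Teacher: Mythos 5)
Your base case is essentially fine (modulo the smaller point that Theorem \ref{existence} builds stable frames only on cofibrant-\emph{fibrant} objects, so for a merely cofibrant $X$ you cannot arrange $(\omega X)_{0,0}=X$ on the nose; you must first replace $X$ and $Y$ fibrantly and accept the weak equivalences the statement allows, which is what the paper does). The genuine gap is in your inductive step. You factor $(\omega X)^{n+1}\to\Omega(\omega X)^n\to\Omega(\omega' Y)^n$ as a Reedy cofibration followed by a Reedy acyclic fibration and justify homotopic constancy of the intermediate object by the claim that ``$\Omega$ preserves it on Reedy fibrant inputs.'' That claim is false, and it is precisely the failure mode the paper isolates at the start of the subsection on frames and the suspension functor: here $\Omega$ is the right adjoint of $\Sigma=-\wedge_S S^1$ on $\C^{\Delta}$, and for a Reedy fibrant frame $Z$ the object $\Omega Z$ is essentially never homotopically constant --- otherwise $(\Sigma,\Omega)$ would be a Quillen equivalence on $\C^{\Delta}$, which it is not even when $\C$ is stable. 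Since your Reedy acyclic fibration $(\omega' Y)^{n+1}\to\Omega(\omega' Y)^n$ is a levelwise weak equivalence, $(\omega' Y)^{n+1}$ inherits the homotopy type of $\Omega(\omega' Y)^n$ and so is \emph{not} a frame; the subsequent appeal to Proposition \ref{stability} also collapses, since that proposition requires the source of $\Sigma(\omega' Y)^{n+1}\to(\omega' Y)^n$ to be a cosimplicial frame.

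This is exactly why the paper introduces realization weak equivalences and Proposition \ref{lifting}: the map $\omega^{n+1}Y\to\Omega\,\omega^{n}Y$ attached to a fibrant stable frame is only a \emph{realization} weak equivalence, and one must \emph{lift} against it into a frame that already exists, rather than manufacture the next level by factoring through $\Omega\,\omega^{n}Y$. The paper's proof therefore runs differently: fibrantly replace and factor to get a cofibration $g:RX\to RY$ weakly equivalent to $f$; build fibrant stable frames on $RX$ and $RY$ by Theorem \ref{existence} and extend $g$ to a map $F'$ of stable frames by Theorem \ref{MAIN}~a); then factor $F'$ as a cofibration followed by an acyclic fibration \emph{in the model category $\C^{\Delta}(\Sigma)$ of cospectra}, arranged to be trivial in degree $(0,0)$. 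There the intermediate object is levelwise weakly equivalent to an already-known stable frame, so it is again a stable frame --- the analogue of the step that fails in your construction. To repair your argument you would have to replace each factorization by a lift against the structure map of a pre-existing fibrant stable frame on (a fibrant replacement of) $Y$, and arrange cofibrancy of the resulting map separately, which is in substance the paper's route.
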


\begin{proof}
By replacing fibrantly, we obtain a map $Rf: RX \rightarrow R'Y$. By factoring this into a cofibration followed by an acyclic fibration, we obtain a cofibration $g: RX \rightarrow RY$ and an acyclic fibration $p: RY \rightarrow R'Y$. Then we can choose a lift in the diagram
\[
\xymatrix{
		X \ar[r]  \ar[d]_{f} & RX \ar[r]^{g} & RY \ar[d] \\
		Y \ar[rr] & & R'Y
}
\]
and we obtain a commutative diagram
\[
\xymatrix{
		X \ar[rr]^{f} \ar[d] & & Y \ar[d] \\
		RX \ar[rr]^{g} & & RY 
}
\]
with the vertical maps weak equivalences and the horizontal ones cofibrations. \\
Now we can choose fibrant stable frames $\omega X$ and $\omega' Y$ on $RX$ and $RY$ respectively and a map $F': \omega X \rightarrow \omega Y$ extending $g$. It is then possible by construction of the factorizations in $\C^{\Delta}(\Sigma)$ and \cite[5.2.8]{Ho} to factor $F'$ as a cofibration $F: \omega X \rightarrow \omega Y$ followed by an acyclic fibration $p: \omega Y \rightarrow \omega' Y$ such that $p$ is the identity in degree $(0,0)$ and $F_{(0,0)}$ is just $g$. This $F$ satisfies all conditions required in the proposition. Furthermore, we have $C(F)_{(0,0)} = C(F_{(0,0)}) \cong C(g)$ and the weak equivalences $X \rightarrow RX$ and $Y \rightarrow RY$ induce a weak equivalence $C(f) \rightarrow C(g)$ by Proposition \ref{cube}, so $C(F)$ is, up to weak equivalence, a stable frame on the cone of $f$ as desired.
\end{proof}

\begin{prop}
Let $X \rightarrow Y \rightarrow Z$ be a cofiber sequence in the stable model category $\C$ and $A$ a cofibrant spectrum. Then we can find a model in $\C$ for the sequence $X \otimes A \rightarrow Y \otimes A \rightarrow Z \otimes A$ in $\Ho(\C)$ which is again a cofiber sequence in $\C$. 
\end{prop}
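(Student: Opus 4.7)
The idea is to build the required cofiber sequence already at the level of stable frames, where $-\wedge A$ is a well-behaved left adjoint, and then transport back to the enrichment.

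First, using cofibrant-fibrant replacement and the factorization axiom, I may assume that the representatives of $X \to Y$ in $\C$ form a cofibration between cofibrant objects. Applying the preceding proposition, I obtain stable frames $\omega X$ and $\omega' Y$, weak equivalences $X \xrightarrow{\simeq} (\omega X)_{(0,0)}$ and $Y \xrightarrow{\simeq} (\omega' Y)_{(0,0)}$, and a Reedy cofibration $F: \omega X \to \omega' Y$ extending $X \to Y$, whose cofiber $C(F)$ is a stable frame on an object weakly equivalent to $Z$. By the pushout definition of the cone, the sequence
\[
\omega X \xrightarrow{F} \omega' Y \longrightarrow C(F)
\]
is a cofiber sequence in the model category $\C^{\Delta}(\Sigma)$.

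Next I apply $-\wedge A$ to this sequence. Since $A$ is a cofibrant spectrum, Corollary \ref{QQ} says that $-\wedge A : \C^{\Delta}(\Sigma) \to \C$ preserves cofibrations and acyclic cofibrations; being also a left adjoint, it preserves pushouts and initial objects. Hence it carries the cofiber sequence above to a cofiber sequence
\[
\omega X \wedge A \longrightarrow \omega' Y \wedge A \longrightarrow C(F) \wedge A
\]
in $\C$, with all three terms cofibrant and the first map a cofibration. This is the desired model in $\C$.

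It remains to see that this cofiber sequence represents $X \otimes A \to Y \otimes A \to Z \otimes A$ in $\Ho(\C)$. By construction, $\omega X$, $\omega' Y$ and $C(F)$ are stable frames on objects weakly equivalent to $X$, $Y$ and $Z$, respectively. By Corollary \ref{uniquenessgen}, any two stable frames on weakly equivalent cofibrant objects induce canonically isomorphic derived functors $\SHC \to \Ho(\C)$, so there are canonical isomorphisms $\omega(X) \wedge^L A \cong \omega X \wedge A$, and analogously for $Y$ and $Z$. Under these isomorphisms the constructed maps go to $X \otimes A \to Y \otimes A \to Z \otimes A$, because $F$ covers the original map $X \to Y$ in degree $(0,0)$ and the second map covers the canonical map $Y \to Z$ in $\Ho(\C)$.

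The main obstacle is the last identification: one has to be sure that the morphisms in the cofiber sequence of frames really match the enrichment morphisms, and not merely some maps covering them in degree $(0,0)$. This is precisely the content of the uniqueness part (b) of Theorem \ref{MAIN}: homotopic extensions of the same map in $\C$ induce the same derived natural transformation, so the choice of $F$ and of the stable frames is immaterial once we have fixed what they do on the sphere spectrum. Everything else in the argument is formal and follows from the left-adjoint behavior of $-\wedge A$ and the construction of cones as pushouts.
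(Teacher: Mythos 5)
Your proposal is correct and follows essentially the same route as the paper: realize the cofibration as a Reedy cofibration of stable frames via the preceding proposition, apply $-\wedge A$ using Corollary \ref{QQ} and the fact that $-\wedge A$ preserves colimits, and identify the result with $X \otimes A \to Y \otimes A \to Z \otimes A$. Your extra care in the final identification (via Theorem \ref{MAIN}(b) and Corollary \ref{uniquenessgen}) spells out a step the paper leaves implicit, but it is the same argument.
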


\begin{proof}
Choose stable frames $\omega X$, $\omega Y$ and a cofibration $\omega X \rightarrow \omega Y$ as in the preceding proposition. We obtain a cofiber sequence $\omega X \rightarrow \omega Y \rightarrow C(F)$ which is, up to weak equivalence, our given cofiber sequence in degree $(0,0)$. Since evaluation at $A$ is defined as a colimit, the functor $- \wedge A: \C^{\Delta}(\Sigma) \rightarrow \C$ preserves colimits, and it preserves cofibrations by Corollary \ref{QQ}. 
Thus the sequence $\omega X \wedge A \rightarrow \omega Y \wedge A \rightarrow C(F) \wedge A$ is again a cofiber sequence, and it represents the sequence $X \otimes A \rightarrow Y \otimes A \rightarrow Z \otimes A$. 
\end{proof}

Now we can prove Theorem \ref{biexact}. First note that $\omega X \wedge A$ is defined as a colimit, so $- \wedge A$ commutes with colimits, in particular with coproducts. Since the coproduct in the homotopy category is the derived functor of the coproduct in the model category, this implies that $- \otimes A$ commutes with coproducts, hence it is an additive functor. \\
To see that $- \otimes A$ commutes with the suspension, recall that $\C^{\Delta}{\Sigma}$ is simplicial and denote the simplicial smash product by $\wedge_S$. The object $\omega X \wedge_S \Delta[1]$ is a cylinder object for $\omega X$ because this is true levelwise and cofibrations and weak equivalences of cospectra are defined levelwise. Since $- \wedge A$ preserves cofibrations and weak equivalences between cofibrant objects and commutes with coproducts, $(\omega X \wedge_S \Delta[1]) \wedge A$ is a cylinder object for $\omega X \wedge A$. Now consider the diagram
\[
\xymatrix{
		((\omega X \wedge_S \Delta[0]) \coprod (\omega X \wedge_S \Delta[0])) \wedge A \ar[rrr]^-{\cong} \ar[d] & & & \omega X \wedge A \coprod \omega X \wedge A \ar[d] \\
		(\omega X \wedge_S \Delta[1]) \wedge A \ar[rrr]^-{=} \ar[d] & & &  (\omega X \wedge_S \Delta[1]) \wedge A \ar[d] \\
		(\Sigma \omega X) \wedge A \ar[rrr] & & & Z
}
\]
where $Z$ is the cofiber of the upper right vertical map; $Z$ is a model for the suspension of $\omega X \wedge A$ which represents $X \otimes A$; hence $Z \cong \Sigma X \otimes A$ in $Ho(\C)$. Since $- \wedge A$ commutes with taking cofibers, the lower horizontal map is an isomorphism; and $\Sigma \omega X$ is a stable frame on a model for the suspension of $X$, hence $(\Sigma \omega X) \wedge A$ is a model for $(\Sigma X)\otimes A$. So we get an isomorphism $(\Sigma X) \otimes A \cong \Sigma(X \otimes A)$ which is by construction also natural in $X$. \\
Now we prove that $- \otimes A$ preserves triangles. By abuse of notation, we also denote models in the model category itself for a suspension by $\Sigma$. Let $X \stackrel{f}{\rightarrow} Y \rightarrow Z \rightarrow \Sigma X$ be a triangle in $\Ho(\C)$; we may assume that $X \rightarrow Y \rightarrow Z$ is an actual cofiber sequence. Choose a cofiber sequence of stable frames $\omega X \stackrel{F}{\rightarrow} \omega Y \rightarrow \omega Z$ as in the proposition above. By passing to the sequence $(\omega X)_{(0,0)} \rightarrow (\omega Y)_{(0,0)} \rightarrow (\omega Z)_{(0,0)}$ if necessary, we can assume that this sequence of stable frames has our original cofiber sequence sitting in degree $(0,0)$. Choose a cone $C(\omega X)$ on $X$; in degree $(0,0)$, this is a cone on $X$. Forming the mapping cone $\omega Y \coprod_F C(\omega X)$, we see by the pointwise definition of colimits of cospectra that this has $Y \coprod_f C(\omega X)_{(0,0)}$ in degree $(0,0)$. Contracting $\omega Y$ to a point in this mapping cone has in degree $(0,0)$ the effect of contracting $Y$ to a point, so we see that the connecting map $\omega Z \simeq \omega Y \coprod_F C(\omega X) \rightarrow \Sigma \omega X$ covers the connecting map $Z \simeq Y \coprod_f C(\omega X)_{(0,0)} \rightarrow \Sigma X$. Another way to express this is that the equivalence $ev_{(0,0)}: \Ho(SF(\C)) \rightarrow \Ho(\C)$ is actually an equivalence of triangulated categories. \\
Since $- \wedge A$ commutes with colimits, we have $(\omega Y \coprod_F C(\omega X)) \wedge A \cong (\omega Y \wedge A) \coprod_{F \wedge A} (C(\omega X) \wedge A)$. Contracting $\omega Y$ respectively $\omega Y \wedge A$, we obtain a diagram
\[
\xymatrix{
	 (\omega Y \coprod_F C(\omega X)) \wedge A \ar[rrr] \ar[dd]_{\cong} & & &  (\Sigma \omega X) \wedge A \ar[dd]^{\cong} \\
	 \\
	 (\omega Y \wedge A) \coprod_{F \wedge A} (C(\omega X) \wedge A) \ar[rrr] & & & \Sigma (\omega X \wedge A) 	 
	 }
\] 
where the right vertical map was the isomorphism $(\Sigma X) \otimes A \cong \Sigma(X \otimes A)$ constructed above.  Since $- \wedge A$ commutes with colimits, this commutes.  But this exactly means that the connecting map for the triangle $X \rightarrow Y \rightarrow Z \rightarrow \Sigma X$ is sent to the connecting map of the triangle $X \otimes A \rightarrow Y \otimes A \rightarrow Z \otimes A \rightarrow \Sigma(X \otimes A)$, and this was the only thing missing to conclude that $- \otimes A$ is triangulated.

\section{The smash product on $\SHC$}

In this chapter, we want to give an explicit description of the smash product on $\SHC = \Ho(\Sp)$ as we have constructed it above. Then we will review the original definition of the smash product as it is given in \cite{adams}; it then easily follows that this smash product is the same one as ours.

\subsection{Quillen endofunctors of spectra}

Given two spectra $A$ and $B$, the following is the most naive candidate for $A \wedge B$: Choose a function $q: \N \rightarrow \N$ which is monotone, $q(n) \leq n$ and such that $q(n+1)-q(n)$ is at most 1. Then $p = Id-q: \N \rightarrow \N$ has the same properties and $p+q = Id$. Furthermore, we demand that both $p$ and $q$ are unbounded. Then for spectra $A$ and $B$, we define the \emph{naive smash product with respect to q} $A \wedge_q B$ levelwise as
\[
(A \wedge_q B)_n = A_{q(n)} \wedge B_{p(n)}
\]
with the following structure maps: If $q(n+1) = q(n)$, we use the structure map of $B$ to obtain a map $A_{q(n)} \wedge B_{p(n)} \wedge S^1 \rightarrow A_{q(n+1)} \wedge B_{p(n+1)}$; else we use the structure map of $A$ after commuting the $S^1$ past the $B_{p(n)}$.  Clearly, this is a functorial construction - both $A \wedge_q -$ and $- \wedge_q B$ are functors $\Sp \rightarrow \Sp$.

\begin{rem}
One might want to make up for the "commuting the $S^1$ past the $B$ factor" in some way, and this in in fact appers in the original definition in topological spaces; however, there is no natural way to do this in our simplicial context; and all our arguments go through without a problem in this regard.
\end{rem}

Of course our aim is to see that $A \wedge_q B$ is a model for $A \otimes B$. For this, we want to see that the functor $A \wedge_q -$ is left Quillen if $A$ is cofibrant; and similarly for $- \wedge_q B$. Since colimits of spectra are formed levelwise and the smash product with a simplicial set commutes with colimits, $A \wedge_q -$ commutes with colimits; this already implies that $A \wedge_q -$ is a left adjoint. Also, since the simplicial smash product on $\Sp$ is defined levelwise, it also commutes with the simplicial smash product. Hence we can consider the associated $S^1$-cospectrum $X$ and check whether it is a stable frame.  First, we have to see that $X_n = A \wedge_q F_n S^0$ is a cofibrant spectrum, i.e., that all structure maps are cofibrations. In low degrees $A \wedge_q F_n S^0$ is just a point and we have nothing to check; and in high degrees the structure maps are either isomorphisms if one uses the structure maps of $F_n S^0$ which are isomorphisms, or a suspension of the structure maps of $A$ which are cofibrations by our assumption that $A$ is cofibrant; hence $A \wedge_q F_n S^0$ is cofibrant. \\
The structure maps of $X$ are the images of the canonical map $\tau_n: F_n S^1 \rightarrow F_{n-1} S^0$. If one chooses $m$ such that $p(m) > n$, then $(A \wedge_q F_{n+1} S^1)_m$ and $(A \wedge_q F_n S^0)_m$ are isomorphic and the isomorphism is actually induced by $A \wedge _q \tau_n$; hence $A \wedge_q -$ sends $\tau_n$ to a $\pi_*$-isomorphism. (This is the point where the assumption that $p$ is unbounded enters.), and $X$ is a stable frame; so $A \wedge_q - $ is left Quillen. Of course, the same proof applies to $- \wedge B$. \\
We would like to see that $A \wedge_q \U$ is weakly equivalent to $A$. This seems to be difficult to see directly since one cannot write down a map between those two which could be the desired $\pi_*$-isomorphism, and when replacing one of them fibrantly, one loses control; note that it is tempting to use the structure maps of $A$ to write down a map $A \wedge_q \U \rightarrow A$; after all, $(A \wedge_q \U)_n$  is just $A_{q(n)} \wedge S^{p(n)}$, and the structure maps yield a map $A_{q(n)} \wedge S^{p(n)} \rightarrow A_n$. But this is \emph{not a map of spectra} - the various sphere coordinates are used in the wrong order. However, we can use a backdoor approach: The functor $- \wedge_q \U$ is also left Quillen (here we use that $q$ is unbounded), and $\U \wedge_q \U$ is in fact isomorphic to the sphere spectrum - it is in level $n$ isomorphic to $S^n$ and all structure maps are isomorphisms since this is true for $\U$; hence it is a model for $F_0 S^0 = \U$. Note that the levelwise identity does not induce a map $\U \rightarrow \U \wedge_q \U$ since various twists are involved in the structure maps of the latter.
Still, by \ref{MAIN} this means that $- \wedge_q \U$ is naturally weakly equivalent to the identity (maybe through a zig-zag),  and hence $A \wedge_q \U$ is weakly equivalent to $A$. Altogether, we obtain the following:

\begin{thm}
The functor $\Sp \times \Sp \rightarrow \Sp$, $(A,B) \rightarrow A \wedge_q B$, represents the smash product functor $\SHC \times \SHC \rightarrow \SHC$. 
\end{thm}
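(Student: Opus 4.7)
The plan is to use the universal property of $\Sp$ established in Theorem \ref{MAIN} and its Corollary \ref{uniquenessgen} to identify the two-variable functor $(A,B) \mapsto A \wedge_q B$ with the enrichment functor $\otimes$ constructed in Section 7. The preceding discussion has already done the single-variable work: for cofibrant $A$, the functor $A \wedge_q -$ is left Quillen, and $A \wedge_q \U$ is weakly equivalent to $A$ (via the backdoor argument using $\U \wedge_q \U \cong \U$ and the uniqueness of left Quillen functors in one variable).

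First I would fix the variable $B$ to be the sphere spectrum and run the argument in the other direction. Recall that by construction $A \otimes \U \cong A$ in $\SHC$ (since $\otimes$ makes $\SHC$ into a unital monoidal category with unit $\U$ by Theorem \ref{monoid}, applied to $\C = \Sp$ with $F = \Id$, or alternatively directly from the fact that $\omega A \wedge \U \cong (\omega A)_{0,0}$ is weakly equivalent to $A$). So both $A \wedge_q -$ and $A \otimes - : \Sp \to \Sp$ are (derived) left Quillen functors sending $\U$ to an object weakly equivalent to $A$. Corollary \ref{uniquenessgen} then yields a canonical natural isomorphism $\phi_A \colon A \wedge_q - \xrightarrow{\cong} A \otimes -$ of derived functors, whose value at $\U$ is (the zigzag representing) the identity on $A$.

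Next I would show that $\phi_A$ is natural in $A$, which promotes the single-variable identifications into an isomorphism of bifunctors $\SHC \times \SHC \to \SHC$. Given a map $f\colon A \to A'$ in $\SHC$ (which we may assume comes from a map of cofibrant spectra), we obtain two derived natural transformations $f \wedge_q -$ and $f \otimes -$ between the corresponding derived Quillen functors, and we must verify
\[
(f \otimes -) \circ \phi_A \;=\; \phi_{A'} \circ (f \wedge_q -).
\]
Both sides are natural transformations between the derived left Quillen functors $A \wedge_q -$ and $A' \otimes -$ from $\Sp$ to $\Sp$. By the rigidity part of Theorem \ref{MAIN}(b) (a derived natural transformation between Quillen functors out of $\Sp$ is determined by its value at $\U$), it suffices to check agreement on $\U$, and there both composites reduce to $f$ up to the canonical isomorphisms $A \wedge_q \U \simeq A$ and $A' \otimes \U \simeq A'$.

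The main obstacle, I expect, is bookkeeping rather than geometry: one must be careful that the various "canonical" weak equivalences $A \wedge_q \U \simeq A$, $A \otimes \U \simeq A$, and the identifications used in Corollary \ref{uniquenessgen} are mutually compatible, so that the reduction to $\U$ in the last step really does yield the \emph{same} map. Once this is verified via Theorem \ref{MAIN}, the bifunctor $\wedge_q$ and the bifunctor $\otimes$ agree on $\SHC \times \SHC$, which is exactly the content of the theorem.
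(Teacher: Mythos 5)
Your proposal is correct and follows essentially the same route as the paper: the paper's proof is a one-line appeal to "the preceding discussion and the construction of the smash product via Quillen functors," together with the remark that a map $f: A \rightarrow A'$ induces a natural transformation $A \wedge_q - \rightarrow A' \wedge_q -$, and your argument is precisely the careful expansion of that — identifying $A \wedge_q -$ with $A \otimes -$ via Corollary \ref{uniquenessgen} and checking naturality in $A$ by reducing to the sphere spectrum using the rigidity in Theorem \ref{MAIN}.
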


\begin{proof}
This is clear by the preceding discussion and the construction of the smash product via Quillen functors. Note that a map $f: A \rightarrow A'$ induces a natural transformation $A \wedge_q - \rightarrow A' \wedge_q -$.
\end{proof}

This can be used to give a direct proof that the smash product is symmetric: After all, $A \wedge_q B \cong B \wedge_p A$, and both are models for the smash product $A \wedge B$. The following, similar statement is also interesting in its own right:

\begin{prop}
Given two left Quillen functors $F, G: \Sp \rightarrow \Sp$, the derived functors satisfy $FG \cong GF$.
\end{prop}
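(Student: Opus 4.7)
The plan is to reduce this to the uniqueness result for derived left Quillen functors out of $\Sp$, namely Corollary \ref{uniquenessgen}, which tells us that a derived left Quillen functor $\Sp \to \C$ is determined up to canonical isomorphism by its (weak equivalence class of) value on the sphere spectrum. Since both $FG$ and $GF$ are compositions of left Quillen functors and hence themselves left Quillen, it suffices to produce a weak equivalence $FG(\U) \simeq GF(\U)$ in $\Sp$; the canonical isomorphism provided by \ref{uniquenessgen} then yields the claimed derived isomorphism.

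To produce this weak equivalence I would use the naive smash product $\wedge_q$ from this section. By the preceding theorem and the discussion leading to it, for any cofibrant spectrum $A$ the functor $A \wedge_q -\colon \Sp\to\Sp$ is a left Quillen endofunctor whose value on $\U$ is weakly equivalent to $A$ (via $A \wedge_q \U \simeq A$, since $- \wedge_q \U$ is naturally weakly equivalent to the identity). Applying \ref{uniquenessgen} to the two left Quillen functors $F$ and $F(\U) \wedge_q -$, both of which take the value $F(\U)$ on $\U$, gives a canonical derived isomorphism $F \cong F(\U) \wedge_q -$. Similarly $G \cong G(\U) \wedge_q -$ on the derived level.

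Now $G(\U)$ is cofibrant because $G$ is left Quillen, so we may evaluate: on the derived level
\[
FG(\U) \cong F(\U) \wedge_q G(\U),\qquad GF(\U)\cong G(\U) \wedge_q F(\U).
\]
Since both $F(\U) \wedge_q G(\U)$ and $G(\U) \wedge_q F(\U)$ are models for the smash product $F(\U)\wedge G(\U)$ in $\SHC$ (equivalently, by the isomorphism $A \wedge_q B \cong B \wedge_p A$ noted in the preceding remarks together with the previous theorem), they are weakly equivalent. Feeding this weak equivalence $FG(\U) \simeq GF(\U)$ back into \ref{uniquenessgen} gives the natural isomorphism $FG \cong GF$ of derived functors.

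The only real subtlety — and the step I would write out most carefully — is the replacement of an abstract left Quillen endofunctor by the concrete $F(\U)\wedge_q -$ via \ref{uniquenessgen}: one must verify that this replacement is natural enough that composing it with $G$ and then evaluating at $\U$ really computes $F(\U)\wedge_q G(\U)$ in $\SHC$. Once that is granted, the symmetry of $\wedge_q$ on the homotopy category makes the conclusion immediate, and no further computation is required.
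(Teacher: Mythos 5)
Your proof is correct and takes essentially the same route as the paper: the paper likewise invokes the fact that derived left Quillen functors out of $\Sp$ are determined by their value on $\U$ to replace $F$ and $G$ by $A \wedge_q -$ and $B \wedge_p -$ with $A = F(\U)$, $B = G(\U)$, and then concludes from $A \wedge_q B \cong B \wedge_p A$. The only cosmetic difference is that the paper uses the complementary function $p$ for $G$, making the swap an on-the-nose isomorphism of spectra, whereas you compare $F(\U) \wedge_q G(\U)$ and $G(\U) \wedge_q F(\U)$ as two models of the same smash product, which comes to the same thing.
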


\begin{proof}
The derived functors of $F$ and $G$ are determined up to isomorphism by $A = F(\U)$ and $B = G(\U)$; hence we may assume $F = A \wedge_q -$ and $G = B \wedge_p -$. To see that the derived functors $FG$ and $GF$ are isomorphic, it suffices to see that $F(G(\U)) \cong G(F(\U))$; however, $F(G(\U)) \cong F(B) = A \wedge_q B \cong B \wedge_p A \cong G(A) \cong G(F(\U))$ as desired.
\end{proof}

\subsection{The original definition of the smash product}

We will follow \cite{adams} in our description of the smash product. For this, it is necessary to work with spectra over a suitable closed monoidal category of pointed topological spaces; all the results we had about simplicial model categories carry over to the topological setting, in particular, there is a notion of topological stable frame in a topological model category. We denote the topological category of spectra as $\Sp^{\Top}$. \\
The basic idea is very similar to the one outlined above, with one subtle difference regarding the structure maps. Choose functions $p,q: \mathbb{N} \rightarrow \mathbb{N}$ as above. Given two topological spectra $A$ and $B$, we again define a spectrum $A \wedge_q B$ with $n$-th space $A_{p(n)} \wedge B_{q(n)}$, but with slightly different structure maps. In the topological setting, there is a "multiplication by -1"-map $\tau$ on $S^1$; regarding $S^1$ as the one-point compactification of $\mathbb{R}$, this is just the map sending $x$ to $-x$. Now, If $q(n+1) = q(n)+1$, we just use the structure map of $B$ to obtain a map  $A_{p(n)} \wedge B_{q(n)} \wedge S^1 \rightarrow A_{p(n+1)} \wedge B_{q(n+1)} $; however, if $q(n+1) = q(n)$, we first permute the $S^1$ past the $B_{q(n)}$, then use  $\tau$ to obtain a self-map $A_{p(n)} \wedge S^1 \wedge B_{q(n)} \rightarrow  A_{p(n)} \wedge S^1 \wedge B_{q(n)}$, and then use the structure map of $A$ to obtain a map to  $A_{p(n+1)} \wedge B_{q(n+1)}$.  The \emph{classical smash product} $- \wedge -: \Ho(\Sp^{\Top}) \times \Ho(\Sp^{\Top}) \rightarrow \Ho(\Sp^{\Top})$ constructed in \cite{adams} has the following basic property:

\begin{thm}
For arbitrary $p$, $q$, there is a natural isomorphism $A \wedge_q B \rightarrow A \wedge B$.
\end{thm}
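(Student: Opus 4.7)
The plan is to invoke the universal property of $\Sp$ established in Theorem \ref{MAIN} and Corollary \ref{uniquenessgen}, transported to the topological setting as suggested at the start of Section 4 and in Remark \ref{othermodels}. First I would verify that, for cofibrant $A$, the topological functor $A \wedge_q -: \Sp^{\Top} \to \Sp^{\Top}$ (now with the $\tau$-twist) is left Quillen. This should be essentially a repetition of the analysis just made for the untwisted simplicial $\wedge_q$: one checks that the $\Sigma$-cospectrum with $n$-th term $A \wedge_q F_n S^0$ is a stable frame, since each level is a cofibrant spectrum (the structure maps are either isomorphisms or suspensions of the cofibrations comprising $A$) and since the image of $F_n S^1 \to F_{n-1} S^0$ becomes an isomorphism in all sufficiently high levels and hence a $\pi_*$-isomorphism. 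The twist $\tau$ is a self-homeomorphism of $S^1$ and so does not disturb either verification; the same analysis in the other variable shows $- \wedge_q B$ is left Quillen for $B$ cofibrant.

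Next I would compute $A \wedge_q \U$ up to weak equivalence by the backdoor trick from the preceding subsection: $\U \wedge_q \U$ is a spectrum with $S^n$ in level $n$ whose structure maps are homeomorphisms (involving $\tau$), hence it is $\pi_*$-isomorphic to $\U$. Applying Corollary \ref{uniquenessgen} to the two left Quillen endofunctors $\mathrm{Id}$ and $- \wedge_q \U$ of $\Sp^{\Top}$ then produces a natural weak equivalence between them, so $A \wedge_q \U \simeq A$. Finally, for the comparison with Adams itself: inspecting the definition in \cite{adams}, the classical smash product $A \wedge -$ is, for cofibrant $A$, again a derived left Quillen endofunctor of $\Sp^{\Top}$ sending $\U$ to $A$. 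Both $A \wedge_q -$ and Adams's $A \wedge -$ are thus derived left Quillen functors taking $\U$ to $A$, and Corollary \ref{uniquenessgen} provides a canonical natural isomorphism between them. Naturality in $A$ follows from the stable-frame dictionary of Theorem \ref{MAIN}, and the canonicity of the isomorphism automatically yields independence of the choice of $q$.

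The main obstacle, as I see it, is the bookkeeping of the twist $\tau$: unlike the simplicial case there are genuine sign issues at the level of spectra, and one must check that they disturb neither the cofibrancy argument for $A \wedge_q F_n S^0$ nor the $\pi_*$-isomorphism argument underlying $\U \wedge_q \U \simeq \U$. Fortunately $\tau$ is a self-homeomorphism of $S^1$, so on each finite level nothing breaks; the signs appear only inside structure maps and wash out in the colimits defining the homotopy groups. A secondary verification is that the stable-frame machinery, developed in this paper for simplicial model categories, transports faithfully to the topological setting — a routine translation that the author has already flagged as admissible.
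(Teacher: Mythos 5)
The first thing to say is that the paper does not prove this statement at all: it is quoted as a ``basic property'' of the classical smash product and attributed to \cite{adams}, where the comparison of the various handicrafted products $A \wedge_q B$ is carried out by hand (telescope arguments over cofinal subsequences). So your proposal is not a variant of the paper's proof but an attempt to supply one from the stable-frames machinery. As a strategy this is very much in the spirit of what the paper does afterwards (the subsequent theorem on $A \wedge_q -$ being left Quillen, and the final monoidal comparison via Theorem \ref{monoid}), and most of your steps are sound and indeed duplicated almost verbatim in the paper's next proof: cofibrancy of $A \wedge_q F_n S^0$, the $\pi_*$-isomorphism criterion for the images of $F_n S^1 \rightarrow F_{n-1}S^0$, the observation that $\tau$ is a homeomorphism and so harmless, and the backdoor identification $\U^{\Top} \wedge_q \U^{\Top} \cong \U^{\Top}$ followed by Corollary \ref{uniquenessgen}. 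What your route buys, if completed, is a conceptual replacement for Adams's explicit telescope comparisons.

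The genuine gap is the sentence ``inspecting the definition in \cite{adams}, the classical smash product $A \wedge -$ is, for cofibrant $A$, again a derived left Quillen endofunctor of $\Sp^{\Top}$ sending $\U$ to $A$.'' Corollary \ref{uniquenessgen} applies only to functors that are actually (derived from) left Quillen functors on the model-category level; Adams's smash product is constructed directly on the stable category of CW-spectra, with no model structure in sight, and an exact endofunctor of the homotopy category is not automatically of this form. The only way to exhibit Adams's $A \wedge -$ as a derived left Quillen functor is to identify it with some specific $A \wedge_{q_0} -$ --- but that identification is essentially Adams's \emph{definition} of $A \wedge B$, so you must make it explicit to avoid circularity. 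Once you do, your argument correctly reduces the theorem to the nontrivial statement that all choices of $q$ give canonically isomorphic left Quillen functors, which Theorem \ref{MAIN} and Corollary \ref{uniquenessgen} (in their topological incarnation, as licensed by the remark at the start of Section 4) do deliver. Two further points need a word each: naturality in $A$ requires the faithfulness half of Theorem \ref{MAIN} applied to the square of natural transformations induced by a map $A \rightarrow A'$, not just the object-wise uniqueness statement; and you should say which uniqueness theorem you are invoking, since the functors here have source $\Sp^{\Top}$ rather than the simplicial $\Sp$ of Theorem \ref{MAIN} (either use the topological analogue of the Section 4 theory for strong topological functors, or precompose with geometric realization).
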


Using topological stable frames, we obtain the following:

\begin{thm}
For cofibrant spectra $A$, $B$, the functors $A \wedge_q -: \Sp^{\Top} \rightarrow \Sp^{\Top}$ and $- \wedge_q B: \Sp^{\Top} \rightarrow \Sp^{\Top}$ are left Quillen and send the topological sphere spectrum $\U^{\Top}$ to $A$ resp. $B$ up to weak equivalence.
\end{thm}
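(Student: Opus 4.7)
The plan is to transcribe the simplicial argument given for the preceding theorem into the topological setting, taking care that the extra $\tau$-twist in the structure maps of $A \wedge_q B$ does not interfere with any of the checks. By symmetry it suffices to treat $A \wedge_q -$; the argument for $- \wedge_q B$ is identical upon interchanging the roles of $p$ and $q$. Since colimits and smash products with a pointed space are computed levelwise in $\Sp^{\Top}$, and the $\tau$-twist is a self-homeomorphism that sits \emph{inside} the structure maps (not affecting the underlying functoriality), $A \wedge_q -$ preserves all colimits and commutes with the topological tensoring; hence it is a topologically enriched left adjoint.

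To see that it is left Quillen, I will use the topological analogue of Theorem \ref{mainthm} to identify the associated $S^1$-cospectrum $X$ with $X_n = A \wedge_q F_n S^0$ and invoke the topological analogue of Theorem \ref{genstableframings}. Each $X_n$ is cofibrant: in levels below the support of $F_n S^0$ under $q$ the spectrum $X_n$ is trivial, and in higher levels the structure maps are either isomorphisms coming from the structure maps of $F_n S^0$ or (possibly $\tau$-twisted) suspensions of the structure maps of $A$; since $\tau$ is a homeomorphism, these remain cofibrations by the cofibrancy of $A$. For the structure maps $\Sigma X_n \rightarrow X_{n-1}$, which are the images under $A \wedge_q -$ of the canonical maps $\tau_n: F_n S^1 \rightarrow F_{n-1} S^0$: for $m$ large enough that $p(m) > n$ (using that $p$ is unbounded), the levelwise map $(A \wedge_q F_n S^1)_m \rightarrow (A \wedge_q F_{n-1} S^0)_m$ is an isomorphism (the $\tau$-twist is a homeomorphism, so this is unchanged from the simplicial case). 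Hence $A \wedge_q \tau_n$ is a $\pi_*$-isomorphism, and $X$ is a topological stable frame.

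For the weak equivalence $A \wedge_q \U^{\Top} \simeq A$, I will repeat the backdoor maneuver of the simplicial version: the same argument, applied to $\U^{\Top}$ in the first variable (now using that $q$ is unbounded), shows that $- \wedge_q \U^{\Top}: \Sp^{\Top} \rightarrow \Sp^{\Top}$ is left Quillen, and a direct inspection of $\U^{\Top} \wedge_q \U^{\Top}$ shows that it is isomorphic to $\U^{\Top}$ (levelwise $S^n$, with all structure maps isomorphisms, despite the $\tau$-twist). By the topological analogue of Corollary \ref{uniquenessgen}, a left Quillen endofunctor of $\Sp^{\Top}$ sending the sphere spectrum to the sphere spectrum is canonically naturally weakly equivalent to the identity, giving a (zig-zag) natural weak equivalence $- \wedge_q \U^{\Top} \simeq \Id$ and thus $A \wedge_q \U^{\Top} \simeq A$.

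The only step that is not formally identical to the simplicial proof is verifying that the $\tau$-insertion does not destroy the colimit-preservation, cofibrancy, or the iso $\U^{\Top} \wedge_q \U^{\Top} \cong \U^{\Top}$, and the main conceptual obstacle is really just organizational: one must have set up topological cosimplicial frames and topological $\Sigma$-cospectra with their model structures and the detection lemma (the analogue of \ref{detection}) in advance, as remarked at the opening of Section 4. Once these are in place $\tau$ is a homeomorphism of $S^1$, hence a weak equivalence and a cofibration wherever such a check is needed, and the argument proceeds with no additional complications.
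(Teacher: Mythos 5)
Your proposal is correct and follows essentially the same route as the paper: the paper's proof is literally ``the same proof applies,'' rechecking colimit-preservation, cofibrancy of $A \wedge_q F_n S^0$, the $\pi_*$-isomorphism property of the images of $F_n S^1 \rightarrow F_{n-1} S^0$ in high enough levels, and the backdoor identification $\U^{\Top} \wedge_q \U^{\Top} \cong \U^{\Top}$ (noting, as you do, that the $\tau$-twists only change the identification by signs without destroying the isomorphism). Your write-up is simply a more explicit transcription of that argument.
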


\begin{proof}
Basically, the same proof applies. The functor $A \wedge_q -$ commutes with colimits and with the smash product of topological spaces, hence is a left adjoint. The spectrum $A \wedge F_n S^0$ is cofibrant, and the maps $F_n S^1 \rightarrow F_{n-1} S^1$ induce $\pi_*$-isomorphisms since they induce isomorphisms in high enough degrees.  That $A \wedge_q \U^{\Top} \cong A$ in the homotopy category follows as above since $\U^{\Top} \wedge_q \U^{\Top}$ is isomorphic to $\U^{\Top}$; the identification of this with the sphere spectrum will be slightly different from the above identification because of the signs involved in the structure maps, but it is still isomorphic to the sphere spectrum.
\end{proof}

The associativity and commutativity isomorphisms for the smash product are then obtained by making intelligent choices for $p$ and $q$. In particular, things are arranged such that the associativity, unit and commutativity isomorphisms stem from natural transformations of the overlying Quillen functors. Thus we  may apply \ref{monoid} (or, rather, the remark following the proof) to obtain the following:

\begin{thm}
Let $F: \Sp \rightarrow \Sp^{\Top}$ denote the geometric realization functor. Then $F$ induces a monoidal equivalence $\Ho(\Sp) \rightarrow \Ho(\Sp^{\Top})$ where the first category is equipped with the smash product we have constructed and $\Ho(\Sp^{\Top})$ is equipped with the classical smash product just described.
\end{thm}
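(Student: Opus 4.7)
The plan is to deduce the theorem by applying Theorem \ref{monoid} (or rather the stronger version formulated in the remark following it) to the Quillen functor $F$.

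First I would verify the hypotheses. The geometric realization functor $F : \Sp \to \Sp^{\Top}$ is a left Quillen equivalence in the stable model structures: on each level it is the standard Quillen equivalence $|-| : \SSet_* \to \Top_*$, and the stabilization constructions on both sides are compatible. In particular $F$ sends the simplicial sphere spectrum $\U$ to a cofibrant model of the topological sphere spectrum $\U^{\Top}$, which is (up to weak equivalence) the unit of the classical smash product by the first theorem of this subsection. Hence $F$ meets the input condition of \ref{monoid}.

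Next I would check that the hypothesis of the strengthened form of \ref{monoid} applies to the Adams smash product. The preceding theorem exhibits the classical smash product as being represented by the Quillen bifunctor given by the functors $A \wedge_q -$ and $- \wedge_q B$ (in each variable separately), and the author has already noted that the associativity, commutativity and unit isomorphisms in $\Ho(\Sp^{\Top})$ stem from natural transformations between these overlying Quillen functors (for suitable choices of the indexing function $q$). This is precisely the setup envisaged in the remark following \ref{monoid}: we have a monoidal structure on $\Ho(\Sp^{\Top})$ induced by Quillen functors and Quillen natural transformations, even though $\Sp^{\Top}$ itself carries no strict monoidal model structure.

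Now I would invoke (the strengthened) Theorem \ref{monoid}: since $F(\U) \simeq \U^{\Top}$ is a cofibrant replacement of the unit, $F$ is canonically strong monoidal with respect to the enrichment-induced smash product on $\Ho(\Sp)$ (which by the first theorem of this subsection is our constructed smash product) and the classical smash product on $\Ho(\Sp^{\Top})$. Because $F$ is also a Quillen equivalence, its derived functor is an equivalence of categories; combined with the strong monoidal structure, this gives the desired monoidal equivalence.

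The only delicate point is the second step, namely the need to check that the coherence isomorphisms for the Adams smash product really do come from natural transformations of the Quillen functors $A \wedge_q -$ (and not just from isomorphisms in the homotopy category). This is essentially taken care of by the careful choice of the indexing functions $p, q$ in Adams' construction, but one has to trace through his definitions to see that the associator, the unit and the braiding are given by honest natural transformations of Quillen functors rather than merely by derived natural isomorphisms; this is the main obstacle and the only place where one uses specific features of the Adams construction beyond the properties already collected in the preceding theorem.
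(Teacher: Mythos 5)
Your proposal is correct and follows exactly the route the paper takes: the paper gives no separate proof for this theorem, instead deducing it from the preceding theorem (that $A \wedge_q -$ and $- \wedge_q B$ are left Quillen with the right value on the sphere) together with Theorem \ref{monoid} in the strengthened form of the remark following it, using that Adams' choices of $p$ and $q$ make the coherence isomorphisms come from natural transformations of the overlying Quillen functors. The one delicate point you flag --- actually tracing through Adams' construction to see that the associator, unitors and braiding are honest natural transformations of Quillen functors --- is precisely the step the paper also leaves to the reader with the phrase ``things are arranged such that.''
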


\bibliographystyle{alpha}
\bibliography{bib}

\begin{thebibliography}{MMSS01}

\bibitem[Ada95]{adams}
J.~F. Adams.
\newblock {\em Stable homotopy and generalised homology}.
\newblock Chicago Lectures in Mathematics. University of Chicago Press,
  Chicago, IL, 1995.
\newblock Reprint of the 1974 original.

\bibitem[BF78]{BF}
A.~K. Bousfield and E.~M. Friedlander.
\newblock Homotopy theory of {$\Gamma $}-spaces, spectra, and bisimplicial
  sets.
\newblock In {\em Geometric applications of homotopy theory (Proc. Conf.,
  Evanston, Ill., 1977), II}, volume 658 of {\em Lecture Notes in Math.}, pages
  80--130. Springer, Berlin, 1978.

\bibitem[DK80]{DK}
W.~G. Dwyer and D.~M. Kan.
\newblock Function complexes in homotopical algebra.
\newblock {\em Topology}, 19(4):427--440, 1980.

\bibitem[DS95]{DS}
W.~G. Dwyer and J.~Spali{\'n}ski.
\newblock Homotopy theories and model categories.
\newblock In {\em Handbook of algebraic topology}, pages 73--126.
  North-Holland, Amsterdam, 1995.

\bibitem[Dug01]{Dusim}
D.~Dugger.
\newblock Replacing model categories with simplicial ones.
\newblock {\em Trans. Amer. Math. Soc.}, 353(12):5003--5027 (electronic), 2001.

\bibitem[GJ99]{GJ}
P.~G. Goerss and J.~F. Jardine.
\newblock {\em Simplicial homotopy theory}, volume 174 of {\em Progress in
  Mathematics}.
\newblock Birkh\"auser Verlag, Basel, 1999.

\bibitem[Hir03]{Hir}
P.~S. Hirschhorn.
\newblock {\em Model categories and their localizations}, volume~99 of {\em
  Mathematical Surveys and Monographs}.
\newblock American Mathematical Society, Providence, RI, 2003.

\bibitem[Hov99]{Ho}
M.~Hovey.
\newblock {\em Model categories}, volume~63 of {\em Mathematical Surveys and
  Monographs}.
\newblock American Mathematical Society, Providence, RI, 1999.

\bibitem[HSS00]{HSS}
M.~Hovey, B.~Shipley, and J.~Smith.
\newblock Symmetric spectra.
\newblock {\em J. Amer. Math. Soc.}, 13(1):149--208, 2000.

\bibitem[ML98]{Maclan}
S.~Mac~Lane.
\newblock {\em Categories for the working mathematician}, volume~5 of {\em
  Graduate Texts in Mathematics}.
\newblock Springer-Verlag, New York, second edition, 1998.

\bibitem[MMSS01]{MMSS}
M.~A. Mandell, J.~P. May, S.~Schwede, and B.~Shipley.
\newblock Model categories of diagram spectra.
\newblock {\em Proc. London Math. Soc. (3)}, 82(2):441--512, 2001.

\bibitem[Qui67]{Qui}
D.~G. Quillen.
\newblock {\em Homotopical algebra}.
\newblock Lecture Notes in Mathematics, No. 43. Springer-Verlag, Berlin, 1967.

\bibitem[Ree]{Reedy}
C.~Reedy.
\newblock Homotopy theory of model categories.
\newblock \url{http://www-math.mit.edu/~psh/}.

\bibitem[RSS01]{RSS}
C.~Rezk, S.~Schwede, and B.~Shipley.
\newblock Simplicial structures on model categories and functors.
\newblock {\em Amer. J. Math.}, 123(3):551--575, 2001.

\bibitem[Shi01]{Smonoidal}
B.~Shipley.
\newblock Monoidal uniqueness of stable homotopy theory.
\newblock {\em Adv. Math.}, 160(2):217--240, 2001.

\bibitem[SS02]{SS}
S.~Schwede and B.~Shipley.
\newblock A uniqueness theorem for stable homotopy theory.
\newblock {\em Math. Z.}, 239(4):803--828, 2002.

\end{thebibliography}
\end{document}